\newcommand{\SE}{\setcounter{equation}{0} \section}
\newcommand{\be}{\begin{equation}}
\newcommand{\ee}{\end{equation}}
\newcommand{\baa}{\begin{array}}
\newcommand{\eaa}{\end{array}}
\newcommand{\ba}{\begin{eqnarray}}
\newcommand{\ea}{\end{eqnarray}}
\numberwithin{equation}{section}
\newtheorem{thm}{\bf Theorem}[section]
\newtheorem{lem}[thm]{\bf Lemma}
\newtheorem{prop}[thm]{\bf Proposition}
\newtheorem{defi}[thm]{\bf Definition}
\newtheorem{rem}[thm]{\bf Remark}
\newcommand{\me}{\mathrm{e}}
\newcommand{\R}{\mathbb{R}}
\begin{document}
\date{\today}
\title[Stefan problem for the Fisher-KPP equation]{The Stefan problem for the Fisher-KPP equation with unbounded initial range} 
\thanks{Y. Du would like to thank Prof Changfeng Gui for inspiring discussions on this problem. This work was supported by the Australian Research Council, the National Natural Science Foundation of China (11171092, 11571093) and the Innovation Scientists and Technicians Troop Construction Projects of Henan Province (114200510011). }
\author[W. Ding, Y. Du and Z. Guo]{Weiwei Ding$^\dag$, Yihong Du$^\ddag$ and Zongming Guo$^\sharp$}
\thanks{$^\dag$ School of Mathematical Sciences, South China Normal University, Guangzhou 510631, China}
\thanks{$^\ddag$ School of Science and Technology, University of New England, Armidale, NSW 2351, Australia}
\thanks{$^\sharp$ Department of Mathematics, Henan Normal University, Xinxiang, 453007, P.R. China}

\keywords{Free boundary, Stefan problem, Fisher-KPP equation, weak solutions, unbounded initial range}

\subjclass{MSC: primary 35K20, 35R35  secondary 35J60}

\begin{abstract}
We consider the nonlinear Stefan problem
\[
 \left \{ \begin{array} {ll} 
 \displaystyle\medskip u_t-d \Delta u=a u-b
u^2  \;\; & \mbox{for $x \in \Omega (t), \; t>0$},\\
\displaystyle\medskip u=0 \hbox{ and } u_t=\mu|\nabla_x u |^2 \;\;&\mbox{for $x \in \partial\Omega (t), \; t>0$}, \\
u(0,x)=u_0 (x) \;\; & \mbox{for $x \in \Omega_0$},
\end{array} \right.
\]
where  $\Omega(0)=\Omega_0$ is an unbounded smooth domain in $\R^N$, $u_0>0$ in $\Omega_0$ and
$u_0$ vanishes on $\partial\Omega_0$. When $\Omega_0$ is bounded, the long-time behavior of this problem has been rather well-understood by \cite{DG1,DG2,DLZ, DMW}. Here we reveal some interesting different behavior for certain unbounded $\Omega_0$. We also give a unified 
approach for a  weak solution theory to this kind of free boundary problems with bounded or unbounded $\Omega_0$.
\end{abstract}

\maketitle


\SE{Introduction}\label{sec1}
 In this paper, we  investigate the following nonlinear Stefan problem
\begin{equation}\label{eqfrfisher} 
 \left \{ \begin{array} {ll} 
 \displaystyle\medskip u_t-d \Delta u=a u-b
u^2  \;\; & \mbox{for $x \in \Omega (t), \; t>0$},\\
\displaystyle\medskip u=0 \hbox{ and } u_t=\mu|\nabla_x u |^2 \;\;&\mbox{for $x \in \Gamma (t), \; t>0$}, \\
u(0,x)=u_0 (x) \;\; & \mbox{for $x \in \Omega_0$},
\end{array} \right.
\end{equation}
where $\Omega (t) \subset \R^N \; (N \geq 2)$ is a varying domain with
boundary $\Gamma (t)$,  $a,\, b$, $\mu$ and $d$ are
given positive constants, $u_0$ is positive in $\Omega_0$ and vanishes on $\partial \Omega_0$. We are interested in the case that $\Omega (0)=\Omega_0$ is an unbounded smooth domain, which induces extra difficulties from the case of bounded $\Omega_0$, and gives rise to some interesting new phenomena (to be explained below).

 Problem \eqref{eqfrfisher} is an analogue of the classical one-phase Stefan
problem but with a logistic type nonlinear source term on the right
side of the diffusive equation. Such a diffusive equation is
often called a Fisher-KPP equation due to the pioneering works of Fisher \cite{Fisher} and Kolmogorov, Petrovski and Piskunov \cite{KPP}, and has been widely used in the
study of propagation questions. 
In \cite{DG1, DG2, DLZ, DMW}, \eqref{eqfrfisher} with a bounded $\Omega_0$ was used  to describe the spreading of a new or invasive species with population density $u(t,x)$ and population range $\Omega(t)$. The evolution of the spreading front
is determined by the free boundary $\Gamma(t)$, governed by the equation $u_t=\mu|\nabla_x u |^2$ on $\Gamma(t)$, which means the velocity of the
movement of a point $x\in\Gamma (t)$ is given by $\mu |\nabla_x u|\nu_x$, where $\nu_x$ denotes  the unit outward normal vector of $\Omega(t)$ at $x$ (a deduction of this condition from ecological considerations can be found in \cite{BDK}). In one space dimension, such a problem was first considered in \cite{DL}.

Problem \eqref{eqfrfisher} is closely related to the following Cauchy problem:
\begin{equation} \label{introcauchy} 
 \left \{ \begin{array} {ll} 
 \displaystyle\medskip U_t-d \Delta U=a U-b
U^2  \;\; & \mbox{for $x \in \R^N, \; t>0$},\\
U(0,x)=u_0 (x) \;\; & \mbox{for $x \in \R^N$},
\end{array} \right.
\end{equation}
where $u_0(x)$ is given in \eqref{eqfrfisher}  but extended to $\R^N$ with value $0$ outside $\Omega_0$. Indeed, when $\Omega_0$ is bounded, it has been shown in \cite{DG2} that as $\mu\to\infty$, for any fixed $t>0$, $\Omega(t)\to\R^N$ and $u(t,x)\to U(t,x)$, where $u$ and $U$ are the unique solutions of \eqref{eqfrfisher} and \eqref{introcauchy}, respectively. (This holds true for unbounded $\Omega_0$ as well; see Theorem \ref{asymu} below.) Problem \eqref{introcauchy} has been studied extensively as a model for propagation (see, for example, \cite{AW78, Fisher, KPP}).

\smallskip

The basic feature of \eqref{eqfrfisher} with a bounded $\Omega_0$ is given by the following theorem of \cite{DMW}:

{\bf Theorem A.} \begin{enumerate}
\item[(i)] $\Omega(t)$ is expanding:
$\overline\Omega_0\subset \Omega(t)\subset \Omega(s)$ if $0<t<s$;
\item[(ii)] $\Gamma(t)\setminus (\mbox{convex hull of }\overline\Omega_0$) is smooth (e.g., $C^2$);
\item[(iii)] $\Omega_\infty:=\cup_{t>0}\Omega(t)$ is either the entire
space $\R^N$, or it is a bounded set;
\item[(iv)] When $\Omega_\infty$ is bounded,
$\lim_{t\to\infty}\|u(t,\cdot)\|_{L^\infty(\Omega(t))}=0$;
\item[(v)] When
$\Omega_\infty=\R^N$, for all large $t$, $\Gamma(t)$ is a
smooth closed hypersurface in $\R^N$, and there exists a
continuous function $M(t)$ such that
\[
 \Gamma(t)\subset\Big\{x: M(t)-\frac{\pi}{2} d_0\leq |x|\leq
M(t)\Big\},
\]
where $d_0$ is the diameter of  $\Omega_0$.
\end{enumerate}

Moreover, it follows from results of \cite{DG2} that
 there exists $c_*=c_*(\mu)\in (0, 2\sqrt{ad})$
  such that, in case (v) of Theorem A, 
  \[
  \lim_{t\to\infty}M(t)/t=c_*,
  \]
   and
  for any $\epsilon>0$, 
\begin{equation*}
\lim_{t\to\infty} \sup_{|x|\leq (c_*-\epsilon)t} \Big|u(t,x)-\frac ab\Big|=0.
\end{equation*}

The number $c_*$ is usually called the spreading speed, which is the unique positive value of $c$ such that the following problem has a unique solution $q$:
\begin{equation}\label{speedchy}
-dq''+cq'=aq-bq^2, \; q>0 \mbox{ in } (0, \infty),\; q(0)=0,\;  q'(0)=c/\mu.
\end{equation}

 The main purpose of this paper is to reveal some rather different and interesting behavior of \eqref{eqfrfisher} for the case 
 that $\Omega_0$ is unbounded. It turns out that such a case is much more complicated than the $\Omega_0$ bounded case. 
 To keep the paper at a reasonable length, we will only examine some very simple situations of unbounded $\Omega_0$ for the long-time behavior of \eqref{eqfrfisher}.

As in \cite{DG2}, a solution to \eqref{eqfrfisher} will be understood in a certain weak sense (to be made precise in section 2 below). We will show that \eqref{eqfrfisher} has a unique weak solution defined for all $t>0$. As for $\Omega_0$, we will assume that there exist two parallel circular cones 
$\Lambda_1$ and $\Lambda_2$, with the same axis,  such that
\[
\Lambda_1\subset \Omega_0\subset \Lambda_2.
\]
Without loss of generality, we may assume that the common axis of $\Lambda_1$ and $\Lambda_2$ is the $x_N$ axis in $\R^N$. If we denote
\[
e_N:=(0,..., 0, 1)\in \R^N,
\]
then there exist $\phi\in (0, \pi)$ and $\xi_1,\xi_2\in \R^1$ so that
\[
\Lambda_i=\left\{x\in\R^N: \frac{x-\xi_ie_N}{|x-\xi_ie_N|}\cdot e_N>
 \cos\phi\right\},\; i=1,2.
\]
In other words, $\Lambda_i$ has vertex at $\xi_ie_N$ and opening angle $2\phi$. Figure 1 below illustrates a case that $\phi\in (\pi/2, \pi)$ with $\theta=\pi-\phi\in (0, \pi/2)$.
\begin{figure}[h]
\centering
\def\svgwidth{8cm}
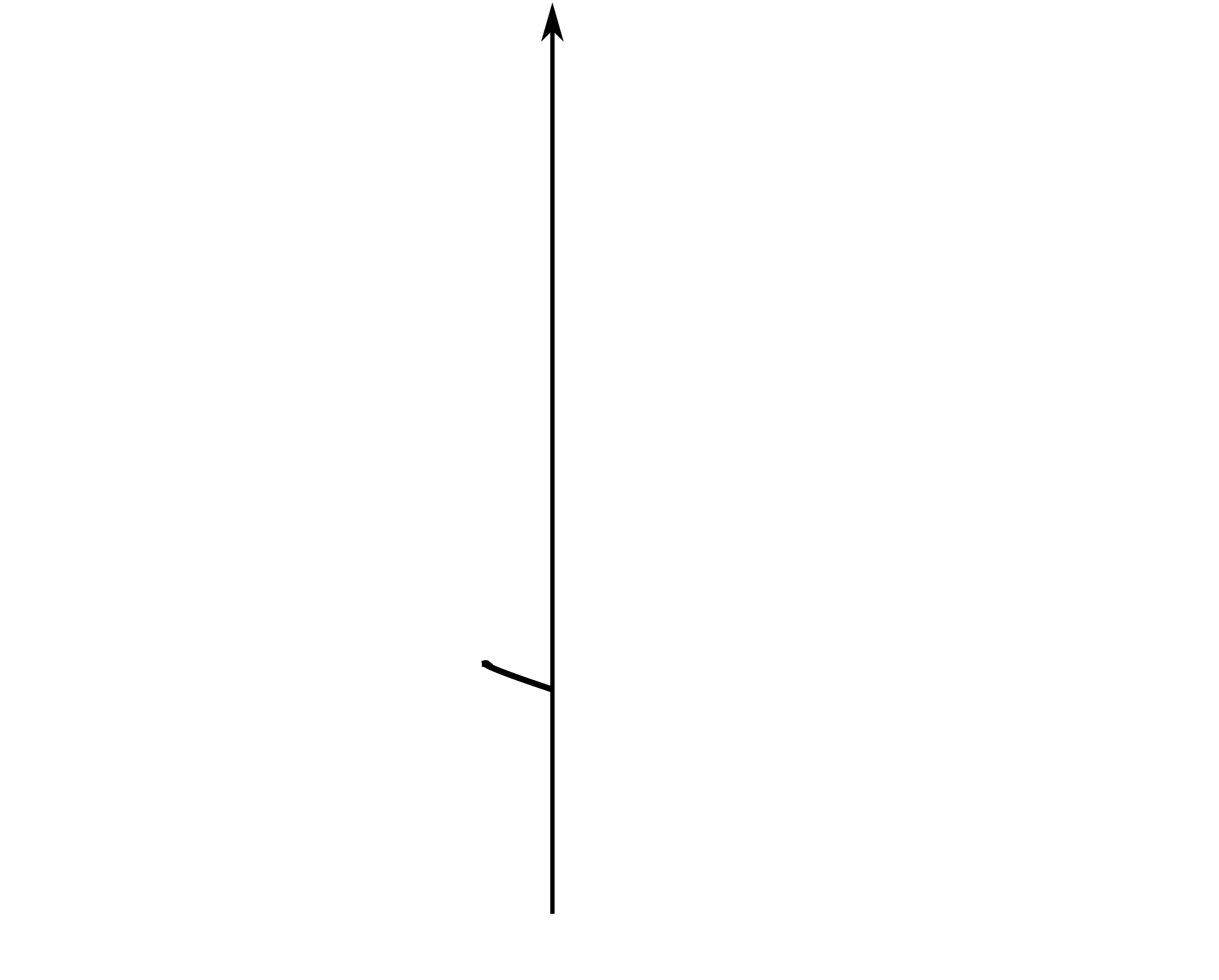
\caption{$\Lambda^\phi+\xi_1e_N\subset \Omega_0\subset \Lambda^\phi+\xi_2e_N$ with $\phi=\pi-\theta\in (\pi/2, \pi)$ }\label{figoutcone}
\end{figure}

For clarity of notations, for $\phi\in (0,\pi)$ and $\xi\in\R^1$, we define
\[\Lambda^\phi:=\left\{x\in\R^N\setminus\{0\}: \frac{x}{|x|}\cdot e_N>\cos\phi\right\} 
\]
and
\[
\Lambda^\phi+\xi e_N:=\Big\{x\in\R^N: x-\xi e_N\in\Lambda^\phi\Big\}.
\]
So $\Lambda^\phi$ and $\Lambda^\phi+\xi e_N$ are parallel cones with vertices at the origin and $\xi e_N$, respectively.

\bigskip

We are now able to describe the main results of this paper.
\begin{thm}\label{thm1}
Suppose that there exist $\phi\in (\pi/2, \pi)$ and $\xi_1>\xi_2$ such that
\begin{equation}\label{Omega0}
\Lambda^{\phi}+\xi_1{e}_N\subset\Omega_0\subset \Lambda^{\phi}+\xi_2{e}_N
\end{equation}
and 
\[
u_0\in C^1(\overline\Omega_0)\cap L^\infty(\Omega_0),\;\liminf_{d(x,\partial\Omega_0)\to\infty} u_0(x)>0.
\]
Then for any given small $\epsilon>0$, there exists $T=T(\epsilon)>0$ such that, for all $t>T$,
\begin{equation}\label{Omega(t)-1}
\Lambda^\phi-\left(\frac{c_*}{\sin\phi}-\epsilon\right)t\,{e}_N\subset \Omega(t)\subset \Lambda^\phi -\left(\frac{c_*}{\sin\phi}+\epsilon\right)t\,{e}_N,
\end{equation}
where $c_*$ is the spreading speed determined by \eqref{speedchy}. 
\end{thm}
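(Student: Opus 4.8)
The proof will be by a comparison argument, using the weak comparison principle for \eqref{eqfrfisher} established in Section 2 together with sub- and super-solutions built from the semi-wave $q$ of \eqref{speedchy}. The guiding picture is that, for large $t$, $\Gamma(t)$ behaves locally like a planar travelling front: for $|e|=1$ the function $q\big(c_*t-x\cdot e+\text{const}\big)$, supported where its argument is positive, is an exact solution of \eqref{eqfrfisher}, so a flat piece of $\Gamma(t)$ recedes in its inward normal direction at speed $c_*$. A (convex) circular cone all of whose lateral normals recede at speed $c_*$ has its apex sliding along its axis at speed $c_*/\sin(\text{half-angle})$; this is the geometric content of \eqref{Omega(t)-1}. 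Throughout I write $\theta:=\pi-\phi\in(0,\pi/2)$, so that the complement of $\Lambda^\phi$ is a downward convex circular cone of half-angle $\theta$, and I will use that outside any convex set $K$ the distance $g(\cdot)=\dist(\cdot,K)$ satisfies $|\nabla g|=1$ and $\Delta g\ge 0$.

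\emph{Outer estimate} $\big(\Omega(t)\subset\Lambda^\phi-(c_*/\sin\phi+\epsilon)t\,e_N\big)$. Fix $\rho$ large and let $K_t$ be the closed $\rho$-neighbourhood of the downward convex cone of half-angle $\theta$ whose apex recedes along $-e_N$ at speed $v_0:=c_*/\sin\phi+\epsilon/2$; this $K_t$ is convex, contains that cone, and has a smooth rounded-off boundary. Put $\overline u(t,x):=q\big(\dist(x,K_t)\big)$ on $\R^N\setminus K_t$ and $\overline u:=0$ on $K_t$. The rounding gives $\Delta g\le (N-1)/\rho$ on $\R^N\setminus K_t$, and the geometry of the (rounded) cone shows its exterior normal makes an angle $\le\pi/2-\theta$ with $e_N$ everywhere, so $\partial_{x_N}g\ge\sin\theta=\sin\phi$ on $\R^N\setminus K_t$. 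Since $|\nabla g|=1$, the $q''$-term drops and a direct computation using \eqref{speedchy} yields, on $\R^N\setminus K_t$,
\[
\overline u_t-d\Delta\overline u-a\overline u+b\overline u^2
= q'\big[\, v_0\,\partial_{x_N}g - c_* - d\,\Delta g \,\big]
\ge q'\big[\,(\epsilon/2)\sin\phi - d(N-1)/\rho\,\big]\ge 0
\]
once $\rho$ is large; moreover the lateral face of $K_t$ recedes at normal speed $c_*+(\epsilon/2)\sin\phi> c_*=\mu|\nabla\overline u|$, so the free-boundary inequality for a super-solution holds. To start the comparison one waits a short time so that $\|u(\tau_0,\cdot)\|_\infty$ is close to $a/b$, replaces $a$ by $a+\eta$ in the profile $q$ so that its ceiling $(a+\eta)/b$ exceeds this bound (this only adds a nonnegative term $\eta\overline u$ to the left side above, and keeps the speed within $\epsilon/2$ of $c_*/\sin\phi$), and chooses $K_{\tau_0}$ inside $\R^N\setminus\Omega(\tau_0)$ so that $\overline u(\tau_0,\cdot)\ge u(\tau_0,\cdot)$; here $u_0\in C^1(\overline\Omega_0)\cap L^\infty$ is used. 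Comparison then gives $\Omega(t)\subset\R^N\setminus K_t\subset\Lambda^\phi-(c_*/\sin\phi+\epsilon)t\,e_N$ for $t$ large.

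\emph{Inner estimate} $\big(\Lambda^\phi-(c_*/\sin\phi-\epsilon)t\,e_N\subset\Omega(t)\big)$. The naive candidate $q\big(\dist(x,\cdot)\big)$ fails: along the $x_N$-axis it reduces to a one-dimensional profile travelling at speed $c_*/\sin\phi>c_*$, which cannot be a sub-solution. Instead I take $\underline u(t,x):=\sup_{y}\underline u_y(t,x)$, where $y$ runs over the lateral boundary points of the downward cone of half-angle $\theta$, $n(y)$ is the outward unit normal there, and $\underline u_y(t,x):=q\big((x-y)\cdot n(y)+c_*t\big)_+$ is the planar semi-wave whose front is the tangent plane at $y$ receding into the cone at speed $c_*$. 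Each $\underline u_y$ is an exact solution of \eqref{eqfrfisher}, so the supremum is a weak sub-solution — this is exactly where the weak framework of Section 2 is needed, to legitimise the supremum across the moving apex. A short computation identifies the positivity set of $\underline u$ as precisely $\Lambda^\phi$ with apex receding at speed $c_*/\sin\phi$; the essential point is that, near the apex, $\sup_y\underline u_y$ is \emph{strictly} below $q(\dist)$ (there is no tangent plane at a corner), which is what makes it admissible. To start, one again runs the equation to a large time $\tau_0$: by $\liminf_{d(x,\partial\Omega_0)\to\infty}u_0>0$ and comparison with $z'=az-bz^2$, $u(\tau_0,\cdot)$ is close to $a/b$ on $\{d(x,\partial\Omega_0)\ge C\tau_0\}$ while $\Omega(\tau_0)$ is crudely controlled; one then positions the cone of $\underline u$ at time $\tau_0$ inside $\R^N\setminus\Omega(\tau_0)$ and builds $\underline u$ from semi-waves of the $(a-\eta)$-logistic equation (still sub-solutions of the original one, with ceiling slightly below $a/b$ and speed slightly below $c_*$), so that $\underline u(\tau_0,\cdot)\le u(\tau_0,\cdot)$. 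Comparison, followed by $\eta\to0$, gives $\Omega(t)\supset\Lambda^\phi-(c_*/\sin\phi-\epsilon)t\,e_N$ for $t$ large.

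The inclusions \eqref{Omega0} are used only at the starting times of the two comparisons — they sandwich $\R^N\setminus\Omega_0$, and hence $\R^N\setminus\Omega(t)$, between two parallel cone-shaped holes — and the residual constant shifts are absorbed into $T(\epsilon)$. I expect two main difficulties. The first is the apex of the cone: $\dist(\cdot,\text{sharp cone})$ has a Laplacian that blows up there, which forces the rounding device in the super-solution and the supremum-of-planar-waves device in the sub-solution, and verifying that this supremum qualifies as an admissible weak sub-solution at the moving corner (within the theory of Section 2) is the delicate point. The second is matching the comparison functions to the data: since $u_0$ is only bounded and only positive (not bounded below by $a/b$) deep inside $\Omega_0$, one must run the equation for a while and perturb the semi-wave parameters, keeping the resulting speed within $\epsilon$ of $c_*/\sin\phi$. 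Everything else — the normal inequality $\partial_{x_N}\dist\ge\sin\phi$, the bound $\Delta\dist\le (N-1)/\rho$ after rounding, the identification of the positivity set of $\sup_y\underline u_y$, and the continuity of $c_*$ in the parameters — is routine.
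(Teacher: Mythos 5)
Your geometric mechanism and your two comparison functions are essentially the ones the paper uses (a rounded receding cone carrying a perturbed semi-wave profile for the upper bound, tangent-plane planar fronts for the lower bound), but there are two genuine gaps. The more serious one is in the outer estimate, at the step ``chooses $K_{\tau_0}$ inside $\R^N\setminus\Omega(\tau_0)$''. To make the waiting-time trick work you need $\|u(\tau_0,\cdot)\|_\infty$ below the ceiling of your profile, which forces $\tau_0$ to be large (it comes from the ODE bound $u\le u^*(t)\to a/b$), and you then need to know that the set $\Omega(\tau_0)$ is \emph{still contained in a translate of the cone} $\Lambda^\phi$, i.e.\ a uniform finite-time bound on how far an \emph{unbounded} free boundary can advance. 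This is not automatic in the weak framework and is not a routine positioning step: it is exactly the content of Lemma \ref{lemupu0}, inclusion \eqref{conez3}, whose proof requires the Appendix (Proposition \ref{pexsym}, the exterior-of-a-ball radially symmetric free boundary problem with the estimate $h(T)\ge R_0/2$ uniform in the center), obtained by comparing $u$ with exterior-ball solutions centered at every point outside a slightly lowered cone. Your proposal contains no argument for this containment, and without it the initial ordering $\overline u(\tau_0,\cdot)\ge u(\tau_0,\cdot)$ cannot be set up.

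The second gap is in the inner estimate: you rely on ``the supremum of the planar semi-waves is a weak sub-solution'', attributing this to the weak framework of Section 2, but no such sup-stability result exists for the Kamenomostskaja-type integral formulation used there (it is a viscosity-solution fact, not one about Definition \ref{defweak}); you flag it as delicate and leave it unproved. It is also unnecessary: since you only need an inclusion of supports, compare $u$ with \emph{each} half-space solution separately (each is a classical, hence the unique weak, solution, so Theorem \ref{thmcomp} applies) and take the union of the resulting inclusions, which is precisely how Lemma \ref{lowbound} proceeds. Relatedly, your initialization ``$u(\tau_0,\cdot)$ is close to $a/b$ on $\{d(x,\partial\Omega_0)\ge C\tau_0\}$ by comparison with $z'=az-bz^2$'' is not justified as stated: the ODE comparison only gives an upper bound in the presence of the free boundary, and a \emph{uniform-in-space lower} bound deep inside $\Omega(\tau_0)$ needs a separate (translation-invariant) construction; the paper sidesteps this by feeding a small monotone initial datum $w_0\le\sigma_0$ into the one-dimensional free boundary problem \eqref{eqlow}, so that $w_0\le u_0$ follows directly from \eqref{addau0}, and by proving the speed-$c_*$ asymptotics of that problem (Lemma \ref{eslowra}) before transferring to $u$. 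With these two ingredients supplied, the rest of your computation (the identity $\overline u_t-d\Delta\overline u-a\overline u+b\overline u^2=q'[v_0\partial_{x_N}g-c_*-d\Delta g]$, the bounds $\partial_{x_N}g\ge\sin\phi$, $\Delta g\le (N-1)/\rho$, and the free-boundary inequality) matches the paper's Lemma \ref{weaksuper}.
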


The above result indicates that as $t\to\infty$, the free boundary $\Gamma(t)=\partial\Omega(t)$ propagates to infinity in the direction $-e_N$
at roughly the speed $c_*/\sin\phi$, and the shape of the free boundary can be roughly approximated by the boundary of the cone $\Lambda^\phi$.

For any $R>0$, if we denote the $R$-neighbourhood of $\Lambda^\phi$ by $N[\Lambda^\phi, R]$, namely
 \[
 N[\Lambda^\phi, R]:=\big\{x\in\R^N: d(x, \Lambda^\phi)<R\big\},
 \]
 then, it is easily seen that when $\phi\in [\pi/2, \pi)$, we have
 \[
 N[\Lambda^\phi, R]=\Lambda^\phi-\frac{R}{\sin\phi}\,e_N,
 \]
 i.e., $N[\Lambda^\phi, R]$ is a shift of $\Lambda^\phi$ in the direction of $-e_N$ by ${R}/{\sin\phi}$. Thus \eqref{Omega(t)-1} is equivalent to
 \begin{equation}\label{Omega(t)-2}
 N[\Lambda^\phi, (c_*-\tilde \epsilon)t]\subset \Omega(t)\subset N[\Lambda^\phi, (c_*+\tilde\epsilon)t] \mbox{ for } t> T, \;\mbox{ with $\tilde\epsilon:=\epsilon\sin\phi$.}
 \end{equation}

In sharp contrast, if $\phi\in (0, \pi/2)$,  then $N[\Lambda^\phi, R]$ has smooth boundary for all $R>0$, and for large $R>0$, $N[\Lambda^\phi, R]$
becomes very different from $\Lambda^\phi$ geometrically. Indeed, for such $\phi$, it is easily verified that $\partial N[\Lambda^\phi, R]$ can be decomposed into two parts: a spherical part $S_R$ on the sphere $\partial B_R(0)$, and the remaining part $C_R$ on the boundary of the
cone $\Lambda^\phi-\frac R {\sin \phi} e_N$, as follows
\[
\partial N[\Lambda^\phi, R]=S_R\cup C_R,\;  S_R:=\partial B_R(0)\setminus \Lambda^{\phi+\frac\pi 2},\;
C_R:=\Big(\partial\Lambda^\phi-\frac R {\sin \phi} e_N\Big)\cap \Lambda^{\phi+\frac\pi 2}.
\]

The following theorem shows that \eqref{Omega(t)-2} also holds when
$\phi\in (0, \pi/2)$, though the geometric implications of \eqref{Omega(t)-2} are very different from that in Theorem 1.1 now.

\begin{thm}\label{thm2}
In Theorem \ref{thm1} above, if we replace $\phi\in (\pi/2, \pi)$ by $\phi\in (0, \pi/2)$, then for any $\epsilon>0$, there exists $\tilde{T}=\tilde{T}(\epsilon)>0$ such that
\begin{equation}\label{inconeset0}
N\big[\Lambda^\phi, (c_*-\epsilon)t\big]\, \subset  \,{\Omega(t)}\, \subset \,  N\big[\Lambda^\phi, (c_*+\epsilon)t\big] \,\hbox{ for all }  t\geq \tilde{T}.
\end{equation}
\end{thm}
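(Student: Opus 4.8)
The plan is to reduce the case $\phi\in(0,\pi/2)$ to Theorem \ref{thm1} by a geometric decomposition of the domain $\Omega_0$ and a comparison argument, exploiting the fact that the spreading speed $c_*$ is isotropic for the underlying Cauchy-type dynamics. The key observation is that, unlike the obtuse case, when $\phi$ is acute the $R$-neighbourhood $N[\Lambda^\phi,R]$ consists of a conical part $C_R$ together with a spherical cap part $S_R\subset\partial B_R(0)$; heuristically, the free boundary near the conical part propagates exactly as in Theorem \ref{thm1}, while near the spherical part it propagates at the ``flat'' speed $c_*$ in the radial direction. So the target set $N[\Lambda^\phi,(c_*\pm\epsilon)t]$ is the natural candidate, and the two estimates in \eqref{inconeset0} should be provable by upper and lower comparison solutions respectively.

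For the \emph{outer} (upper) estimate $\Omega(t)\subset N[\Lambda^\phi,(c_*+\epsilon)t]$, I would build a supersolution by combining two kinds of barriers: (a) for points of $\Omega(t)$ lying in directions close to $\partial\Lambda^\phi$, reuse the conical supersolution from the proof of Theorem \ref{thm1} (the one built from the one-dimensional profile $q$ of \eqref{speedchy} pushed in the direction $-e_N$ at speed slightly above $c_*/\sin\phi$); (b) for the spherical cap region, use a radially expanding supersolution of ``spherical front'' type, again built from a profile related to \eqref{speedchy} but now expanding radially at speed $c_*+\epsilon$ (its existence follows from standard ODE/phase-plane analysis and the comparison with planar fronts, since for large radii the spherical front speed converges to the planar speed $2\sqrt{ad}\ge c_*$, and by choosing $\mu$-compatible boundary conditions one can match the free boundary condition $u_t=\mu|\nabla u|^2$). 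Gluing these two supersolutions along the overlap region $\partial\Lambda^{\phi+\pi/2}$ — where one must check that the resulting function is still a (weak) supersolution, i.e., that the gradient jump has the right sign — gives, via the weak comparison principle stated in Section 2, the inclusion $\Omega(t)\subset N[\Lambda^\phi,(c_*+\epsilon)t]$ for large $t$.

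For the \emph{inner} (lower) estimate, I would place inside $\Omega_0$ a family of large balls (using $\liminf_{d(x,\partial\Omega_0)\to\infty}u_0>0$ and $\Lambda^\phi+\xi_1 e_N\subset\Omega_0$, the cone contains arbitrarily large balls) and let each generate a radially expanding subsolution spreading at speed close to $c_*$; the union of the expanding balls centered along the axis of $\Lambda^\phi$, together with the conical subsolution from Theorem \ref{thm1}'s proof near $\partial\Lambda^\phi$, fills up $N[\Lambda^\phi,(c_*-\epsilon)t]$ from below. Alternatively, and perhaps more cleanly, one can simply run the proof of Theorem \ref{thm1} verbatim: the lower bound there, $\Lambda^\phi-(c_*/\sin\phi-\epsilon)t\,e_N=N[\Lambda^\phi,(c_*-\tilde\epsilon)t]\subset\Omega(t)$, only used the cone inclusion and the positivity of $u_0$ away from $\partial\Omega_0$, and these hypotheses are unchanged when $\phi$ becomes acute — the one point to recheck is that the shifted-cone subsolution from Theorem \ref{thm1} is still admissible (still a subsolution, still fits inside $\Omega(t)$), which holds because shifting a cone in the $-e_N$ direction is monotone regardless of the opening angle.

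The main obstacle I anticipate is the \emph{outer} estimate near the spherical cap, and specifically the matching of the radial supersolution at the corner circle $C_R\cap S_R$: the conical barrier and the spherical barrier have different gradient directions there, and one must verify that the glued function is a genuine weak supersolution (so that the gradient of the inside function dominates in the sense needed for the Stefan free-boundary condition), rather than merely continuous. A secondary technical point is constructing the radial front supersolution with the correct overshoot of $a/b$ and the correct boundary slope so that the free-boundary relation $u_t=\mu|\nabla u|^2$ is satisfied as an inequality; this is where the analysis of \eqref{speedchy} in curved geometry enters, and where one must be careful that the curvature correction does not spoil the bound for all large $t$ uniformly.
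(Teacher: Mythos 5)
Your proposal deviates from what actually works for acute $\phi$ in both halves, and each half has a genuine gap. For the inner bound, your ``cleaner alternative'' fails: the identity $N[\Lambda^\phi,R]=\Lambda^\phi-\frac{R}{\sin\phi}e_N$ holds only for $\phi\in[\pi/2,\pi)$. When $\phi\in(0,\pi/2)$ the vertex of the shifted cone $\Lambda^\phi-\frac{R}{\sin\phi}e_N$ lies at distance $R/\sin\phi>R$ from $\Lambda^\phi$, so the shifted cone is strictly larger than $N[\Lambda^\phi,R]$ near the axis; in fact the inclusion $\Lambda^\phi-(\frac{c_*}{\sin\phi}-\epsilon)t\,e_N\subset\Omega(t)$ is false for small $\epsilon$, since it would contradict the upper inclusion $\Omega(t)\subset N[\Lambda^\phi,(c_*+\epsilon)t]$. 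Moreover the mechanism behind Theorem \ref{thm1}'s lower bound cannot even be initialized: it rests on half-space subsolutions tangent to the lateral boundary and contained in $\Omega_0$, and a convex (acute) cone contains no half-space. Your first variant inherits the same problem (the ``conical subsolution from Theorem \ref{thm1}'' near $\partial\Lambda^\phi$ is exactly that half-space family), and expanding balls centred only on the axis cannot reach the points of $N[\Lambda^\phi,(c_*-\epsilon)t]$ lying near the lateral boundary far from the vertex, whose distance to the axis grows linearly in the distance to the vertex. The paper's fix is to centre the radially symmetric expanding free-boundary subsolution of \cite{DG1,DG2} (spreading speed $c_*$) at \emph{every} point $x_0$ with $B_{R^*}(x_0)\subset\Lambda^\phi+\xi_0e_N$, where $\xi_0$ is chosen so that $u_0$ has a positive lower bound there; the union of the resulting balls of radius $(c_*-\frac\epsilon2)t$ covers $N[\Lambda^\phi,(c_*-\epsilon)t]$ for large $t$, with no conical piece needed.

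For the outer bound you correctly identify the corner matching of a glued conical/spherical supersolution as the main obstacle, but you leave it unresolved, so the key step of that half (verifying that the glued function is a weak supersolution across $\partial\Lambda^{\phi+\pi/2}$, including the free-boundary condition along the corner sphere, uniformly in $t$) is missing. The paper avoids any gluing: for every unit vector $\nu$ in the polar cone $\Sigma_\phi=\{\nu\in\mathbb S^{N-1}:\arccos(\nu\cdot e_N)\geq\phi+\frac\pi2\}$ it compares $u$ with the planar one-dimensional free-boundary supersolution $\tilde w(t,x\cdot\nu)$ of \eqref{c2eqlow}, whose front satisfies $\tilde\rho(t)\leq(c_*+\frac\epsilon2)t$ for large $t$ by the one-dimensional estimates of \cite{DL}; since $\Omega_0\subset\Lambda^\phi+\xi_2e_N$ lies in each corresponding half-space, one obtains $\Omega(t)-\xi_2e_N\subset\{x\cdot\nu\leq(c_*+\frac\epsilon2)t\}$ for all such $\nu$, and taking the intersection over $\nu\in\Sigma_\phi$ gives exactly the $R$-neighbourhood of the convex cone, hence $\Omega(t)\subset N[\Lambda^\phi,(c_*+\epsilon)t]$ after enlarging $t$. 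Each comparison is with a smooth one-directional supersolution, so no matching condition ever arises; if you wish to keep the glued-barrier route, you must supply precisely the corner verification you deferred.
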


Let us note that \eqref{inconeset0} implies that for all large $t$, $\partial\Omega(t)$ is roughly approximated in shape by
$\partial N[\Lambda^\phi, R]$, which is very different from $\partial\Lambda^\phi$; in particular the propagation of $\partial\Omega(t)$ in the directions $\nu$ satisfying $\nu\cdot e_N\leq \cos (\phi+\frac\pi 2)$ is roughly at speed $c_*$.

\begin{rem} \begin{itemize}
\item[(i)]
If \eqref{Omega0} holds with $\phi=\pi/2$, then much better results than those in Theorems 1.1 and 1.2 can be obtained; it can be shown that the free boundary $\partial\Omega(t)$ converges to a moving hyperplane,
and the solution converges to the corresponding planar semi-wave. These require very different techniques and will be considered elsewhere.
\item[(ii)]
In the case of Theorem 1.1, it is natural to conjecture that the solution $u$ converges to a traveling wave with a V-shaped front {\rm (}free boundary{\rm )}. This and related questions will be investigated in a future work {\rm (\cite{DGWZ})}. 
\end{itemize}
\end{rem}

Let us mention
here that the existence of  ``V-shaped" non-planar traveling wave solutions for the Cauchy problem \eqref{introcauchy} are well known by now (see e.g., \cite{HN,HMR1,HMR2,NT, T}), which lend support to the validity of our conjecture above, but the existence of V-shaped semi-wave with free boundary is technically much more demanding. Note also that the nonlinearities in \cite{HMR1,HMR2,NT, T} are of bistable type, very different from the Fisher-KPP type here.

\bigskip

The rest of the paper is organized as follows. In Section 2, we prove the existence and uniqueness of a weak solution to \eqref{eqfrfisher} by following the approach of \cite{DG2}, where the case of bounded $\Omega_0$  was treated. For unbounded $\Omega_0$, some extra difficulties arise.  In view of possible future applications, here we give a unified approach for  a much more general problem than \eqref{eqfrfisher}, with $\Omega_0$ either bounded or unbounded; see \eqref{eqmain}.

In Section 3, we study the long-time dynamical behavior of problem \eqref{eqfrfisher}. We consider a special type of unbounded $\Omega_0$, namely \eqref{Omega0} holds for some $\phi\in (0,\pi)$. The main results are
Theorems \ref{thm6.1}, \ref{spreadspeed} and \ref{spreadspeed1}. In particular, the conclusions stated in Theorem 1.1 above follow from Theorem \ref{spreadspeed}, and the statements in Theorem \ref{thm2} are consequences of Theorem  \ref{spreadspeed1}. 

The Appendix (Section 4) is concerned with the existence and uniqueness of classical solutions for an auxiliary radially symmetric free boundary problem with initial range the exterior of a ball. These conclusions are used in the proof of Theorem \ref{spreadspeed}.


\section{Weak solutions}

In this section, we extend the
 weak solution theory in \cite{DG2} to cover the case that the initial population range $\Omega_0 $ is 
unbounded. To our knowledge, little is known for problem \eqref{eqfrfisher} with a general unbounded $\Omega_0$. 
For future applications, we consider the following more general problem

\begin{equation}\label{eqmain} 
\left \{ \begin{array} {ll} \displaystyle\medskip  u_t-d \Delta u=g(x,u) \;\; &
\mbox{for $x \in \Omega (t), \; t>0$},\\
\displaystyle\medskip u=0, \;\; u_t=\mu|\nabla_x u|^2 &\mbox{for $x \in \Gamma (t), \; t>0$}, \\
u(0,x)=u_0 (x) \;\; & \mbox{for $x \in \Omega_0$},
\end{array} \right.
\end{equation}
where $\Omega_0$ is a domain in $\R^N$  with Lipschitz boundary $\partial\Omega_0$, the initial function $u_0 (x)$ satisfies
\begin{equation}\label{assumeu0} 
u_0 \in C({\overline {\Omega_0}}) \cap H^1_{loc} (\Omega_0)\cap L^{\infty} (\Omega_0),
\;\; u_0>0 \;\; \mbox{in $\Omega_0$}, \;\;\; u_0=0 \;\;
\mbox{on $\partial \Omega_0$},
\end{equation}
and  the reaction term $g(x,u)$ is assumed to satisfy
\begin{equation}\label{assumeg} 
\left. \begin{array} {ll}
\hbox{(i)} & \smallskip g(x,u) \hbox{ is continuous for } (x,u) \in \R^N \times [0, \infty), \,\,\\
\hbox{(ii)} &\smallskip  g(x,u) \hbox{ is locally Lipschitz in } u \hbox{ uniformly for }x \in \R^N,\,\,\\
\hbox{(iii)} &\smallskip  g(x,0) \equiv 0,\,\,\\
\hbox{(iv)} & \hbox{there exists } K>0 \hbox{ such that }  g(x,u)  \leq  Ku \hbox{ for all } x \in \R^N \hbox{ and } u \geq 0.
\end{array} \right\}
\end{equation}

To describe the weak formulation of  \eqref{eqmain}, it is convenient to start by considering $0<t\leq T$ for some arbitrarily given $T \in (0,
\infty)$. As in \cite{DG2}, the idea is to consider the extended $u$ in the bigger region 
  $H_T:=[0, T] \times \R^N$ by defining $u(t,x)=0$ for $x \in \R^N \backslash \Omega (t)$, $0\leq t\leq T$, and regard it as a weak solution of an associated equation with certain jumping discontinuity. 
 Throughout this section, we denote
\begin{equation*}
\alpha (w)=\left \{\begin{array}{ll}\displaystyle\medskip  w \;\;\;&\mbox{if $w>0$},\\
w-d \mu^{-1} \;\;\; & \mbox{if $w \leq 0$},
\end{array} \right.
\end{equation*}
and
\begin{equation*}
{\tilde u}_0 (x)=\left \{ \begin{array}{ll} \displaystyle\medskip  u_0 (x) \;\;\;
& \mbox{for $x \in \Omega_0$},\\
0 \;\;\; &\mbox{for $x \in \R^N \backslash \Omega_0$}.
\end{array} \right.
\end{equation*}

\begin{defi} \label{defweak} 
Suppose that $u_0$ satisfies \eqref{assumeu0} and $g$
satisfies \eqref{assumeg}. A nonnegative function $u \in
H^1_{loc} (H_T) \cap L^{\infty}(H_T)$ is called a {\it weak} solution of
\eqref{eqmain} over $H_T$ if
\begin{equation}\label{eqweak} 
\displaystyle\medskip \int_0^T \int_{\R ^N} \Big[d \nabla_x u \cdot \nabla_x
\phi-\alpha (u) \phi_t\Big] dx dt-\int_{\R ^N} \alpha ({\tilde
u}_0) \phi (0,x) dx=\int_0^T \int_{\R ^N} g(x,u) \phi dxdt
\end{equation}
for every function $\phi \in C^{\infty}(H_T)$ such that $\phi$ has compact support\footnote{This means that there exists a ball $B_R$ in $\R^N$ such that
$\phi(t,x)=0$ for $t\in [0, T]$ and $x\in\R^N\setminus B_R$.}  and 
$\phi=0$ on $\{T\} \times \R^N$.
\end{defi}

Correspondingly, if $u$ satisfies \eqref{eqweak} with ``=" replaced by ``$\geq$ " (resp. ``$\leq$")  for every test function $\phi \in C^{\infty}(H_T)$ satisfying $\phi\geq 0$ in $H_T$, $\phi=0$ on $\{T\} \times \R^N$ and $\phi$ has compact support, then we call it a {\it weak supersolution} (resp. {\it weak subsolution}) of \eqref{eqmain} over $H_T$. 

Moreover, as in \cite{DG2,Fr1}, for each weak solution (or weak supersolution, or weak subsolution) $u(t,x)$, the function
$\alpha (u(t,x))$ is defined as $u(t,x)$ if $u(t,x)>0$; at points
where $u(t,x)=0$ the function $\alpha (u(t,x))$ is only required to
satisfy $-d \mu^{-1} \leq \alpha (u(t,x)) \leq 0$ and to be such
that it is altogether a measurable function over $H_T$. However, if $w(x)$ is
continuous and positive in $\Omega_0$ and identically zero in $\R^N
\backslash \Omega_0$,  then we understand that $\alpha (w)=-d \mu^{-1}$ on $\R^N
\backslash \Omega_0$.

\begin{rem}
Definition \ref{defweak} is an extension of \cite[Definition 2.1]{DG2} for weak solution of problem \eqref{eqmain} with bounded $\Omega_0$ to the 
case that $\Omega_0$ is generally unbounded. Indeed, the choice of the test function in \eqref{eqweak} implies that if $\Omega_0$ 
is bounded and if $u$ is a weak solution to \eqref{eqmain} over $H_T$  by Definition \ref{defweak}, then the restriction of $u$ 
to $G\times [0,T]$ is a weak solution of \eqref{eqmain} over $G\times [0,T]$ by \cite[Definition 2.1]{DG2}, where $G$ is any given 
bounded domain in  $\R^N$ such that $\Omega(t)$ stays inside $G$ for $0\leq t\leq T$. 
\end{rem}

By a {\it classical solution} of problem \eqref{eqmain} for $0<t\leq T<\infty$, we mean a pair  $(u(t,x), \Omega(t))$ such that 
$\bigcup_{0\leq t\leq  T}\partial\Omega(t)$ is a $C^1$ hypersurface in $\R^{N+1}$,  $u$, $\nabla_x u$ are continuous in $\bigcup_{0\leq t\leq T}\overline{\Omega(t)}$, and $u_t$, $u_{x_ix_j}$ are continuous in $\bigcup_{0<t\leq T}\Omega(t)$, and all the equations in \eqref{eqmain} are satisfied in the classical sense.
In this case, there exists $\Phi\in C^1\left(\overline{\bigcup_{0<t\leq T}\Omega(t)}\right)$ such that
$$\Phi(t,x)=0\; \hbox{ on } \Gamma (t),\;\;\;  \nabla_x \Phi (x,t)\neq 0 \;\; \mbox{on $\Gamma (t)$}, \;\;\; \Phi
(t,x)<0 \;\;\; \mbox{in $\Omega (t)$},$$
and it follows from
\[
u=0 \mbox{ and } u_t=\mu|\nabla_x u|^2 \mbox{ on } \Gamma(t)
\]
that
\[
\Phi_t=\mu \nabla_x u\cdot \nabla_x \Phi \mbox{ for } x\in \Gamma(t).
\]

\begin{thm} \label{weakclassic}
 {\rm (i)} Assume that $(u(t,x), \Omega(t))$ is a classical solution of \eqref{eqmain} for $0<t\leq T$.
 Then
 $$ w(t,x):=\left \{ \begin{array}{ll}  \displaystyle\medskip u(t,x) \;\;\; &\mbox{for $x
 \in \Omega (t)$, $0<t \leq T$}, \\
  \displaystyle 0 \;\;\; &\mbox{for $x \in \R^N \backslash \Omega (t)$, $0<t\leq T$}
 \end{array} \right.$$
 is a weak solution of \eqref{eqmain} over $H_T$.

 {\rm (ii)} Let $w$ be a weak solution of \eqref{eqmain} on $H_T$. Assume that
 there exists a $C^1$ function $\Phi$ in ${H_T}$
 satisfying
 $$\Omega (t): = \Big\{x \in \R^N: \; w(t,x)>0\Big\}=\Big\{x \in \R^N: \; \Phi
 (t,x)<0\Big\}$$
with $\Omega (0)=\Omega_0$, and
$$\nabla_x \Phi \neq 0 \;\;\; \mbox{on\; $\Gamma (t) := \partial
\Omega (t)$}.$$
Setting $u=w$ in $\bigcup_{0<t\leq T} {\overline {\Omega (t)}}$, and
assume that $u$, $\nabla_x u$ are continuous in $\bigcup_{0 \leq
t\leq T} {\overline {\Omega (t)}}$ and that $\nabla_x^2 u$, $u_t$ are
continuous in $\bigcup_{0<t\leq T} \Omega (t)$. Then $(u(t,x),
\Omega(t))$ is a classical solution of \eqref{eqmain} for $t\in (0, T]$.
\end{thm}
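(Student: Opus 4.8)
The plan is to read \eqref{eqweak}, for a function of the stated form, as simultaneously encoding the interior equation in $\Omega(t)$, the initial condition, and a jump (``Stefan-type'') condition on the moving interface, and then to identify this jump condition — once $u=0$ on $\Gamma(t)$ is used — with the free boundary law $u_t=\mu|\nabla_x u|^2$. Throughout I would set $Q^+:=\bigcup_{0<t<T}\big(\{t\}\times\Omega(t)\big)$, let $Q^-$ be its complement in $(0,T)\times\R^N$, and put $\mathcal S:=\partial Q^+\cap\big((0,T)\times\R^N\big)$; since $\Omega(t)=\{\Phi(t,\cdot)<0\}$, this $\mathcal S$ is the level set $\{\Phi=0\}$, which is a $C^1$ hypersurface because $\nabla_x\Phi\neq0$ on it. The engine in both directions is the divergence theorem in $(t,x)$ applied on $Q^+\cap\supp\phi$ and $Q^-\cap\supp\phi$ separately, using that the outward space-time unit normal of $Q^+$ along $\mathcal S$ is $(\Phi_t,\nabla_x\Phi)\big/\sqrt{\Phi_t^2+|\nabla_x\Phi|^2}$ and that the space-time surface measure on $\mathcal S$ satisfies $d\sigma=\big(\sqrt{\Phi_t^2+|\nabla_x\Phi|^2}\,/\,|\nabla_x\Phi|\big)\,dS_x\,dt$, where $dS_x$ denotes surface measure on $\Gamma(t)$.

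For part (i): given the classical solution $(u,\Omega(t))$, one first checks that the zero-extension $w$ is continuous on $H_T$ (no jump across $\mathcal S$, since $u=0$ on $\Gamma(t)$) and piecewise $C^1$ across the $C^1$ hypersurface $\mathcal S$, so $w\in H^1_{loc}(H_T)\cap L^\infty(H_T)$ and $\nabla_x w$, $w_t$ carry no singular part along $\mathcal S$. Applying the divergence theorem to the two space-time integrals of \eqref{eqweak} (with the weak solution denoted $w$), the contributions at $\{t=0\}$ combine into $\int_{\R^N}\alpha(\tilde u_0)\phi(0,\cdot)\,dx$ and cancel the corresponding term of \eqref{eqweak}; those at $\{t=T\}$ vanish; and the interface contributions reduce to
\[
\int_0^T\!\!\int_{\Gamma(t)}d\,\partial_\nu u\;\phi\;dS_x\,dt\;-\;\frac d\mu\int_0^T\!\!\int_{\Gamma(t)}\frac{\Phi_t}{|\nabla_x\Phi|}\,\phi\;dS_x\,dt .
\]
Since $u=0$ on $\Gamma(t)$ while $u>0$ in $\Omega(t)=\{\Phi<0\}$, one has $\nabla_x u=-|\nabla_x u|\,\nabla_x\Phi/|\nabla_x\Phi|$ on $\Gamma(t)$, hence $\partial_\nu u=-|\nabla_x u|$; and $\Phi_t=\mu\nabla_x u\cdot\nabla_x\Phi$ gives $\Phi_t/|\nabla_x\Phi|=-\mu|\nabla_x u|$, so the two interface integrals cancel exactly. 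What is left of \eqref{eqweak} is $\int_0^T\!\!\int_{\Omega(t)}\big(u_t-d\Delta u-g(x,u)\big)\phi\,dx\,dt$, which vanishes because $u$ solves \eqref{eqmain} classically. Hence $w$ is a weak solution over $H_T$.

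For part (ii): the same integration by parts can be run in reverse for the weak solution $w$ — legitimate because $u=w$ has the assumed $C^{2,1}$ regularity in $\bigcup_{0<t\le T}\Omega(t)$ and $C^1$ regularity up to $\bigcup_{0\le t\le T}\overline{\Omega(t)}$, and $\bigcup_{0\le t\le T}\Gamma(t)=\{\Phi=0\}$ is a $C^1$ hypersurface — and it turns \eqref{eqweak} into
\[
\int_0^T\!\!\int_{\Omega(t)}\!\big(u_t-d\Delta u-g(x,u)\big)\phi\,dx\,dt
\;+\;\int_0^T\!\!\int_{\Gamma(t)}\!\Big(d\,\partial_\nu u-\frac d\mu\,\frac{\Phi_t}{|\nabla_x\Phi|}\Big)\phi\,dS_x\,dt
\;+\;\int_{\R^N}\!\big(\alpha(\widehat u_0)-\alpha(\tilde u_0)\big)\phi(0,\cdot)\,dx\;=\;0
\]
for every admissible $\phi$, where $\widehat u_0$ is the zero-extension of $u(0,\cdot)$. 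Testing with $\phi$ compactly supported inside $Q^+$ forces $u_t-d\Delta u=g(x,u)$ in $Q^+$ (hence up to $t=T$ by continuity of $u_t$, $\nabla_x^2 u$); then testing with $\phi$ supported near $\mathcal S$ and vanishing at $t=0$ forces $d\,\partial_\nu u=d\mu^{-1}\Phi_t/|\nabla_x\Phi|$ on each $\Gamma(t)$; and finally testing with arbitrary $\phi$ forces $\alpha(\widehat u_0)=\alpha(\tilde u_0)$, i.e. $u(0,\cdot)=u_0$ on $\Omega_0$. Continuity of $w$ gives $u=w=0$ on $\Gamma(t)=\partial\Omega(t)$, and the relation $d\,\partial_\nu u=d\mu^{-1}\Phi_t/|\nabla_x\Phi|$ is exactly $\Phi_t=\mu\nabla_x u\cdot\nabla_x\Phi$ on $\Gamma(t)$, which — in view of $u=0$ on $\Gamma(t)$ and the $C^1$ motion of $\Gamma(t)$ — is equivalent to $u_t=\mu|\nabla_x u|^2$ on $\Gamma(t)$ (the computation displayed just before the theorem, read in reverse). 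Together with the supplied regularity and $\Omega(0)=\Omega_0$, this exhibits $(u,\Omega(t))$ as a classical solution of \eqref{eqmain}.

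I expect the main obstacle to be the interface bookkeeping common to both directions: getting the orientation of $\mathcal S$ relative to the sign of $\Phi$ right, correctly converting between the space-time surface measure $d\sigma$ and the product measure $dS_x\,dt$, and verifying that the jump condition generated by \eqref{eqweak} is precisely the free boundary law once $u=0$ on $\Gamma(t)$ is imposed. A secondary, purely technical matter is that the test functions in Definition \ref{defweak} are $C^\infty$, compactly supported in $x$, and vanish at $t=T$ but not at $t=0$; producing the localized test functions used in part (ii) (supported inside $Q^+$, or near $\mathcal S$, or with prescribed trace at $t=0$) requires a routine mollification/cut-off argument, and the divergence theorem is applied on $Q^\pm\cap\supp\phi$ using that $\partial Q^\pm$ is $C^1$ away from a negligible lower-dimensional edge set.
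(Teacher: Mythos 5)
Your proposal is correct and is essentially the argument the paper intends: the paper omits the proof, deferring to \cite[Theorem 2.3]{DG2}, whose proof is exactly this space--time divergence-theorem computation on $Q^{+}$ and $Q^{-}$, with the surface-measure conversion $d\sigma=\sqrt{\Phi_t^2+|\nabla_x\Phi|^2}\,|\nabla_x\Phi|^{-1}\,dS_x\,dt$ and the cancellation (resp.\ recovery) of the interface terms via $\Phi_t=\mu\nabla_x u\cdot\nabla_x\Phi$ on $\Gamma(t)$ --- the same bookkeeping the paper itself displays in the proof of Theorem \ref{super-sub}. Your orientation and sign conventions, the treatment of $\alpha(w)=-d\mu^{-1}$ on the region where $w=0$ (as fixed by the convention following Definition \ref{defweak}), and the identification of the jump condition with the Stefan law all check out, so there is no gap to report.
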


\begin{proof} The proof follows  that of  \cite[Theorem 2.3]{DG2} with some obvious modifications, and we omit the details. 
\end{proof}

Next we prove the existence and uniqueness of weak solution to problem \eqref{eqmain}.  The strategies of the proof are adapted from the approximation method of \cite{Ka} as used in \cite{DG2, Fr1}, but new techniques are required to handle difficulties arising from the unboundedness of $\Omega_0$.
We begin with the uniqueness part, since the existence proof will use some of the arguments in the uniqueness proof.

\begin{thm}\label{unique} 
Suppose that $\mu_1 \geq \mu_2>0$, $u_1$ is a weak supersolution of \eqref{eqmain} over $H_T$ with $\mu=\mu_1$ and  $u_2$ is a weak subsolution with $\mu=\mu_2$, 
where the initial data $u_0$ and $\Omega_0$ are shared. Then $u_1 \geq u_2$ a.e. in $H_T$. In particular, problem \eqref{eqmain} can have at most one weak solution over $H_T$.
\end{thm}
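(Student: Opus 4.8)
The plan is to adapt the comparison argument for the one-phase Stefan problem due to Kamin/Friedman, as carried out in \cite[Theorem 2.4]{DG2} for bounded $\Omega_0$, and to control the new difficulty coming from the unboundedness of $\Omega_0$ by a suitable localization/exhaustion procedure. First I would subtract the two weak (in)equalities: for a nonnegative test function $\phi\in C^\infty(H_T)$ with compact support and $\phi=0$ on $\{T\}\times\R^N$, the difference $u_2-u_1$ satisfies
\[
\int_0^T\!\!\int_{\R^N}\Big[d\,\nabla_x(u_2-u_1)\cdot\nabla_x\phi-\big(\alpha_2(u_2)-\alpha_1(u_1)\big)\phi_t\Big]dx\,dt \;\le\; \int_0^T\!\!\int_{\R^N}\big(g(x,u_2)-g(x,u_1)\big)\phi\,dx\,dt,
\]
where I write $\alpha_i$ for the function $\alpha$ associated with $\mu_i$ (note that $\alpha_1(u_1)\le\alpha_2(u_2)$ wherever $u_1=0$, since $\mu_1\ge\mu_2$ forces $-d\mu_1^{-1}\le -d\mu_2^{-1}$; this sign is exactly what makes the ordering $\mu_1\ge\mu_2$ usable, and the shared initial data kills the $t=0$ term in the correct direction). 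The standard trick is then to choose $\phi$ to solve, backward in time, a linearized dual problem: writing $u_2-u_1 = b(t,x)\,(\alpha_2(u_2)-\alpha_1(u_1))$ for a measurable coefficient $0\le b\le 1$ (the quotient of the jump in $u$ by the jump in $\alpha$, using monotonicity of $w\mapsto\alpha(w)+\text{const}$), and $g(x,u_2)-g(x,u_1)=c(t,x)(u_2-u_1)$ with $c$ bounded by the local Lipschitz constant of $g$, one wants $\phi\ge 0$ solving $b\,\phi_t + d\,\Delta\phi + bc\,\phi = -\psi$ for an arbitrary smooth $\psi\ge0$ of compact support, so that the left side collapses to $-\int\!\!\int (\alpha_2(u_2)-\alpha_1(u_1))\psi$, forcing $\alpha_2(u_2)\ge\alpha_1(u_1)$ a.e., hence $u_2\le u_1$ a.e.

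The execution proceeds through the now-classical regularization: replace $b$ by smooth $b_\varepsilon$ with $\varepsilon\le b_\varepsilon\le 1$, replace $c$ by a smooth bounded $c_\varepsilon$, and solve the uniformly parabolic backward problem on a large cylinder $Q_n:=[0,T]\times B_n$ with zero lateral data and terminal data $\phi=0$; call the solution $\phi_{n,\varepsilon}$. One needs (a) $0\le\phi_{n,\varepsilon}\le C$ with $C$ independent of $n,\varepsilon$ (maximum principle, using $\psi\ge0$ and $\sup|c_\varepsilon|\le K'$, so $\phi\le K_0 := T\|\psi\|_\infty e^{K'T}$ or similar), and (b) a bound $\int_0^T\!\!\int_{Q_n} b_\varepsilon\,\phi_{n,\varepsilon,t}^2\,dx\,dt + \sup_t\int_{B_n}|\nabla\phi_{n,\varepsilon}|^2\,dx \le C$ again independent of $n,\varepsilon$, obtained by multiplying the equation by $\phi_t$ and integrating (the standard energy identity for this degenerate dual equation). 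These a priori bounds let one pass to the limit $\varepsilon\to0$ (as in \cite{DG2,Fr1}) to produce an admissible test function $\phi_n$ supported in $\overline{B_n}$, insert it into the subtracted weak inequality, and control the error terms coming from $b_\varepsilon\to b$, $c_\varepsilon\to c$ via dominated convergence using the uniform bounds. Since $\psi$ has compact support, for all $n$ large enough $\operatorname{supp}\psi\subset B_{n/2}$, and the argument yields $\int_0^T\!\!\int(\alpha_2(u_2)-\alpha_1(u_1))\psi\ge 0$; arbitrariness of $\psi\ge0$ then gives $\alpha_2(u_2)\ge\alpha_1(u_1)$ a.e., whence $u_2\le u_1$ a.e.\ in $H_T$.

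The main obstacle is exactly the unboundedness of $\Omega_0$: the comparison test function $\phi_n$ must be genuinely compactly supported (as Definition \ref{defweak} requires), yet the solution $u_2-u_1$ and the coefficients $b,c$ are only $L^\infty$ on all of $H_T$, so I must verify that truncating the dual problem to $B_n$ and sending $n\to\infty$ does not lose information — concretely, that the boundary terms on $\partial B_n$ produced when $\phi_n$ is plugged into the weak formulation are harmless. The key point is that $u_2-u_1\in L^\infty(H_T)$ and $0\le b\le1$, $|c|\le K'$ are global bounds, so the right-hand side and the zeroth-order left-hand side are integrable against the uniformly bounded $\phi_n$; and because $\psi$ is fixed with compact support, no decay of $\phi_n$ at spatial infinity is needed — it suffices that the energy and sup bounds on $\phi_n$ are uniform in $n$, which is ensured by the maximum principle and the degenerate energy estimate above (these do not see the size of $B_n$). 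A secondary technical nuisance is the measurable-selection/definition issue for $\alpha(u)$ at the set $\{u=0\}$: one has to check that the inequality $\alpha_1(\tilde u_0)\le\alpha_2(\tilde u_0)$ holds for the prescribed values $-d\mu_i^{-1}$ on $\R^N\setminus\Omega_0$ and that the measurable representatives of $\alpha_i(u_i)$ can be compared pointwise a.e.; this is routine given the monotonicity built into $\alpha$. With these points dispatched, uniqueness of the weak solution follows by taking $u_1=u_2$ two weak solutions with the same $\mu$.
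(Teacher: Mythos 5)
Your overall scheme is the same Kamenomostskaja--Friedman duality argument the paper uses (write $u_2-u_1=e\,[\alpha_2(u_2)-\alpha_1(u_1)]$, $g(x,u_2)-g(x,u_1)=\ell\,(u_2-u_1)$, regularize $e,\ell$, solve a backward nondegenerate dual problem on a large cylinder with a fixed compactly supported source, and pass to the limit). But there is a genuine gap exactly at the point that makes the unbounded-$\Omega_0$ case different, and you explicitly dismiss it: you claim that the boundary/cutoff errors on $\partial B_n$ are harmless and that ``no decay of $\phi_n$ at spatial infinity is needed --- it suffices that the energy and sup bounds on $\phi_n$ are uniform in $n$.'' That is not true. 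Since the test functions of Definition \ref{defweak} must be smooth with compact support, the dual solution on $[0,T]\times B_R$ has to be cut off near $\partial B_R$, and the resulting error has the form $\int_0^T\!\!\int_{B_R\setminus B_{R-2\epsilon}}[\alpha_2(u_2)-\alpha_1(u_1)]\bigl(de\,\phi\,\Delta\xi_\epsilon+2de\,\nabla\phi\cdot\nabla\xi_\epsilon\bigr)\,dx\,dt$. The factor $\alpha_2(u_2)-\alpha_1(u_1)$ is merely bounded and does \emph{not} decay at infinity (where both solutions vanish it equals the constant $d(\mu_1^{-1}-\mu_2^{-1})$), and the cutoff region has measure of order $R^{N-1}$; a uniform $L^\infty$ bound gives an error of order $R^{N-1}$ and a uniform energy bound only improves this to order $R^{(N-1)/2}$, both of which blow up as $R\to\infty$. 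What is actually needed --- and what constitutes the new technical ingredient of the paper's proof --- is a quantitative pointwise decay estimate for the dual solution near $\partial B_R$: the paper compares $\phi_m$ with the barrier $-C_{10}e^{-\beta_1 t}(1+|x|^2)^{-\beta_2}$ (with $\beta_2=N/2$) and then with a harmonic-type barrier $v^*$ on the annulus $B_R\setminus B_{R-1}$, obtaining $\sup_{\partial B_R}|\partial_\nu\phi_m|=O(R^{-N})$ and hence a boundary error $O(R^{-1})$, which vanishes as $R\to\infty$. The compact support of your source $\psi$ is not a reason why decay is unnecessary; it is precisely the hypothesis that makes the decay estimate provable. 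Without this step your argument does not close.

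Two secondary points. First, your sign bookkeeping is off: $\mu_1\geq\mu_2$ gives $-d\mu_1^{-1}\geq -d\mu_2^{-1}$ (not $\leq$), so on $\R^N\setminus\overline\Omega_0$ one has $\alpha_1(\tilde u_0)\geq\alpha_2(\tilde u_0)$, and the initial term enters with the favorable sign $d(\mu_2^{-1}-\mu_1^{-1})\int\phi(0,x)\,dx\geq 0$; correspondingly the final conclusion should read $\alpha_2(u_2)\leq\alpha_1(u_1)$ a.e., which then yields $u_1\geq u_2$ (your stated implication ``$\alpha_2(u_2)\geq\alpha_1(u_1)$ hence $u_2\leq u_1$'' is backwards). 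Second, after letting the regularization parameter go to zero you cannot literally ``insert'' the limiting $\phi_n$ into the weak formulation (it is neither smooth nor compactly supported in the required sense); as in the paper, one should keep the smooth regularized dual solution, cut it off, and estimate the commutator terms $I_m$, $J_m$ directly via $\|e_m-e\|_{L^2}$, $\|\ell_m-\ell\|_{L^2}$ together with the weighted bound on $\int e_m|\Delta\phi_m|^2$, before sending $m\to\infty$ and then $R\to\infty$.
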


\begin{proof}
We complete the proof in three steps.

\smallskip

\noindent
{\bf Step 1:}  {\it Setup of the approximation method.}
 
  With $u_1$ and $u_2$ as given in the statement of the theorem, we define
  $$ 
  \ell (t,x)=\left \{ \begin{array}{ll} \displaystyle\medskip\frac{g(x,u_2 (t,x))-g(x,
u_1 (t,x))}{u_2 (t,x)-u_1 (t,x)} \;\;& \mbox{if $u_2 (t,x) \neq u_1
(t,x)$}, \\
0 \;\;& \mbox{if $u_2 (t,x)=u_1 (t,x)$},
\end{array} \right.
$$
and for $i=1,2$, let $\alpha_i(u)$ denote $\alpha(u)$ with $\mu=\mu_i$, and
$$
e (t,x)=\left \{ \begin{array}{ll} \displaystyle\medskip\frac{u_2 (t,x)-u_1
(t,x)}{\alpha_2 (u_2 (t,x))-\alpha_1 (u_1 (t,x))} \;\;\; &\mbox{if
$u_2 (t,x) \neq u_1 (t,x)$},\\
0 \;\;\; &\mbox{if $u_2 (t,x)=u_1 (t,x)$}.
\end{array} \right.
$$
  We then have 
\begin{equation}\label{difference} 
 \int_0^T \int_{\R^N} \big[\alpha_2(u_2)- \alpha_1(u_1)\big]\big(\phi_t+de\Delta \phi +e\ell\phi \big) dxdt \geq d(\mu_2^{-1}- \mu_1^{-1})\int_{\R^N \backslash {\overline {\Omega_0}}} \phi(0,x) dx 
\end{equation}
for every nonnegative $\phi \in C^{\infty}(H_T)$ such that $\phi$ has compact support,  and
$\phi=0$ on $\{T\} \times \R^N$.  

It is easily checked that if we write 
$$\alpha_2 (u_2 (t,x))-\alpha_1 (u_1 (t,x))=\bar{\alpha}(t,x)\big[u_2(t,x)-u_1(t,x)\big]$$ when $u_2(t,x)\neq u_1(t,x)$, 
then $\bar{\alpha}(t,x) \geq 1 $ for almost all $(t,x)\in H_T$. And hence, 
\begin{equation}\label{bde}
0 \leq e(t,x) \leq  1 \;\; \mbox{for almost all $(t,x)
\in H_T$}.
\end{equation}
Moreover, by the condition \eqref{assumeg} on $g$ and the fact that $u_1, u_2 \in
L^\infty (H_T)$, it follows that $\ell \in L^{\infty}(H_T)$. We then approximate $e$ and $\ell$ by smooth functions $e_m\in C^{\infty}(H_T)$ and $\ell_m\in C^{\infty}(H_T)$, respectively, such that for any $R>0$, 
\begin{equation}\label{appfns}
\| e - e_m\|_{L^2([0,T]\times B_R)}\to 0,\quad  \| \ell - \ell_m\|_{L^2([0,T]\times B_R)}\to 0\,\,\hbox{ as } m\to\infty,
\end{equation}
and that 
\begin{equation}\label{appcond}
\inf_{H_T} e_m\geq \frac{1}{m}, \quad \|e_m \|_{L^{\infty}(H_T)}\leq C_1,\quad   \|\ell_m \|_{L^{\infty}(H_T)}\leq C_1,
\end{equation}
for some positive constant $C_1$ independent of $m$. Here and in what follows, we use $B_R$ to denote the ball with center the origin and radius $R$.

Now, let $R_0>0$ be fixed, and choose a nonnegative function $f \in C^{\infty}(H_T)$ with $f(t,x)=0$ for all $|x|\geq R_0$ and $0\leq t \leq T$. For any $m\geq 1$ and $R> R_0+1$, we consider the following backward parabolic equation 
\begin{equation} \label{backward} \left \{ \begin{array}{ll} 
\displaystyle\medskip\frac{\partial \phi}{\partial t}+d e_m \Delta \phi+e_m \ell_m \phi=f \;\;\;&
\mbox{in $[0,T)\times B_R$},\\
\displaystyle\medskip \phi=0 \;\;\; &\mbox{on $\{T\} \times B_R$},\\
\phi=0 \;\; &\mbox{on $[0,T] \times \partial B_R$}.
\end{array} \right.
\end{equation}
This is a nondegenerate problem and it has a unique smooth solution $\phi_m$ (see e.g., \cite{LSU}). Furthermore, the parabolic maximum principle (applied to $\phi_m(T-t,x)$) implies that 
$$\phi_m\leq 0 \hbox{ in } [0,T]\times B_R.$$ 

Next, for $0<\epsilon\ll 1$, we take a cutoff function $\xi_{\epsilon}\in C_0^{\infty}(\R^N)$ such that
\begin{equation}\label{modifier1}
0\leq \xi_{\epsilon} \leq 1 \hbox{ in } \R^N, \quad  \xi_{\epsilon} =1 \hbox{ in } B_{R-2\epsilon}, \quad \xi_{\epsilon} =0 \hbox{ in } \R^N \backslash B_{R-\epsilon}, \
\end{equation}
 and that
 \begin{equation}\label{modifier2}
 \| \nabla \xi_{\epsilon} \|_{L^{\infty}(\R^N)} \leq C_2\epsilon^{-1},\quad    \|\Delta \xi_{\epsilon} \|_{L^{\infty}(\R^N)}  \leq C_2\epsilon^{-2} 
 \end{equation}
 for some positive constant $C_2$ independent of $\epsilon$. Then for any $m>1$ and $0<\epsilon \ll 1$, define
 $$ \psi_m^{\epsilon} (t,x)=\left \{\begin{array}{ll} \displaystyle\medskip\phi_m(t,x)\xi_{\epsilon}(x) \;\;\;&\mbox{in $[0,T]\times B_R $},\\
0  \;\;\; & \mbox{in  $[0,T]\times (\R^N  \backslash B_{R})$}.
\end{array} \right.$$
Clearly, $\psi_m^{\epsilon}\leq 0$, it belongs to $ C^{\infty}(H_T)$,  and  vanishes in $(\{T\} \times \R^N)\cup ([0,T]\times (\R^N  \backslash B_{R}))$. 

We may now take $-\psi_m^{\epsilon}$ as a text function in \eqref{difference} to obtain, due to  $\mu_1\geq \mu_2>0$, 
\begin{equation*}
 \int_0^T \int_{\R^N} \big[\alpha_2(u_2)- \alpha_1(u_1)\big]\Big((\psi_m^{\epsilon})_t+de\Delta \psi_m^{\epsilon} +e\ell\psi_m^{\epsilon} \Big) dxdt \leq 0. 
\end{equation*}
Since $\xi_{\epsilon}f\equiv f$ in $H_T$, it then follows that 
\begin{equation}\label{mainesti}
\left.\begin{array}{ll}
& \displaystyle \medskip  \int_0^T \int_{\R^N}\big[\alpha_2(u_2)- \alpha_1(u_1)\big] f dxdt \\
=&\displaystyle \medskip   \int_0^T \int_{B_R}\big[\alpha_2(u_2)- \alpha_1(u_1)\big]\Big( \frac{ \partial\phi_m}{\partial t}+d e_m \Delta \phi_m+e_m \ell_m \phi_m\Big) \xi_\epsilon dx dt\\
\leq & I_m + J_m + K_m,
\end{array}\right. 
\end{equation}
where 
$$I_m=I_m(\epsilon):= \int_0^T \int_{B_R}\big[\alpha_2(u_2)- \alpha_1(u_1)\big] d\xi_{\epsilon}(e_m-e)\Delta\phi_m dxdt,  $$
$$J_m=J_m(\epsilon):= \int_0^T \int_{B_R}\big[\alpha_2(u_2)- \alpha_1(u_1)\big] \xi_{\epsilon}(e_m\ell_m-e\ell)\phi_m dxdt,$$
and 
$$K_m=K_m(\epsilon):= \int_0^T \int_{B_R}\big[\alpha_2(u_2)- \alpha_1(u_1)\big] \Big(-de\phi_m\Delta \xi_{\epsilon}-2de\nabla \phi_m\cdot\nabla \psi_{\epsilon}\Big)dxdt.$$

Our aim is to show, through suitable estimates on $I_m,\; J_m$ and $K_m$,  that
\[
 \int_0^T \int_{\R^N}\big[\alpha_2(u_2)- \alpha_1(u_1)\big] f dxdt\leq 0,
 \]
and the conclusions of the theorem would then follow easily from this inequality.

\smallskip
\noindent
{\bf Step 2:} {\it Estimates of $I_m$, $J_m$ and $K_m$.}

 We first consider $I_m$. Let 
$$C_3:=\|u_1\|_{L^{\infty}(H_T)}+\|u_2\|_{L^{\infty}(H_T)}+d\mu_1^{-1}+d\mu_2^{-1}.$$ 
By the H\"{o}lder inequality, we have 
\begin{equation*} 
\left.\begin{array}{ll}
I_m\!\! &\displaystyle\medskip  \leq dC_3 \int_0^T\int_{B_R} |e_m-e||\Delta\phi_m|dxdt\\
&\displaystyle\medskip \leq dC_3 \Big(\int_{0}^T\int_{B_R} \frac{|e_m-e|^2}{|e_m|}dxdt\Big)^{\frac{1}{2}}\Big(\int_{0}^T\int_{B_R} |e_m||\Delta\phi_m|^2 dxdt\Big)^{\frac{1}{2}}\\
&\displaystyle\medskip \leq dC_3 \big\| e_m-e\big\|_{L^2([0,T]\times B_R)}^{\frac{1}{2}} \Big(\int_{0}^T\int_{B_R} \frac{|e_m-e|^2}{|e_m|^2}dxdt\Big)^{\frac{1}{4}}\Big(\int_{0}^T\int_{B_R} |e_m||\Delta\phi_m|^2 dxdt\Big)^{\frac{1}{2}}.
\end{array}\right.
\end{equation*}   
It follows from the proof of  \cite[Lemma 3.7]{DG2} that  there is a positive constant $C_4=C_4(T,f,R)$ such that 
$$\Big(\int_{0}^T\int_{B_R} |e_m||\Delta\phi_m|^2 dxdt\Big)^{\frac{1}{2}} \leq C_4.$$
Moreover, by the same analysis as that used in the proof of \cite[Lemma 5]{CH}, we may require that the approximation sequence $e_m$ satisfies 
additionally
$$
\Big\| \frac{e}{e_m}\Big\|_{L^2([0,T]\times B_R)} \leq C_5
$$
for some positive constant $C_5=C_5(T,R).$ We thus obtain
\begin{equation}\label{estiim}
I_m\leq  C_6 \big\| e_m-e\big\|_{L^2([0,T]\times B_R)}^{\frac{1}{2}},
\end{equation}
with $C_6=d(T|B_R| +C_5^2)^{1/4}C_3C_4$ independent of $m$.

Next, it is easily seen that 
\begin{equation*} 
\left.\begin{array}{ll}
\medskip  J_m\;\leq \;C_3\|\phi_m \|_{L^{\infty}([0,T]\times B_{R})}
&\!\!\!\!\Big(\| e_m\|_{L^2([0,T]\times B_R)} \big\| \ell_m-\ell\big\|_{L^2([0,T]\times B_R)}\\
&\;\;\;\;\;\; + \| \ell \|_{L^2([0,T]\times B_R)} \big\| e_m-e\big\|_{L^2([0,T]\times B_R)}  \Big). 
\end{array}\right.
\end{equation*}   
By the comparison arguments used in \cite[Lemma 3.6]{DG2}, we have 
$$\|\phi_m \|_{L^{\infty}([0,T]\times B_{R})} \leq C_7,$$
for some positive constant $C_7=C_7(T,f)$. Hence, by setting $C_8=(T|B_R|)^{1/2}C_1C_3C_7$, we obtain 
\begin{equation}\label{estijm}
J_m\leq C_8\Big( \big\| \ell_m-\ell\big\|_{L^2([0,T]\times B_R)} + \big\| e_m-e\big\|_{L^2([0,T]\times B_R)}  \Big).
\end{equation}

It remains to estimate $K_m$. Making use of \eqref{bde}, \eqref{modifier1} and \eqref{modifier2}, we have 
\begin{equation}\label{estikmfr} 
\left.\begin{array}{ll}
K_m &\displaystyle\medskip  \leq dC_3 \int_0^T\int_{B_R}\Big( \big|\phi_m\Delta \xi_{\epsilon}\big| + 2\big| \nabla \phi_m\cdot\nabla \xi_{\epsilon}\big|\Big)dxdt \\
&\displaystyle\medskip \leq dC_2C_3 \Big(\int_{0}^T\int_{B_{R-\epsilon}\backslash B_{R-2\epsilon}} \Big( \frac{|\phi_m|}{\epsilon^2}+ \frac{2|\nabla \phi_m|}{\epsilon}\Big)dxdt.
\end{array}\right.
\end{equation} 
Since $\phi_m(t,x)=0$ on $[0,T]\times \partial B_R$, it follows by letting $\epsilon \to 0$ in \eqref{estikmfr} that 
\begin{equation}\label{estikmau}
\limsup_{\epsilon\to 0} K_m(\epsilon) \leq C_9 R^{N-1} \sup_{0\leq t\leq T,\,x\in\partial B_R} \Big|\frac{\partial \phi_m(t,x)}{\partial \nu_x} \Big|,
\end{equation}  
where $C_9=C_9(T,C_2,C_3)$ is a positive constant independent of $m$ and $R$, and 
$\nu_x$ is the outward unit normal vector of $B_R$ at $x\in \partial B_R$. 

We next estimate  $\partial \phi_m(t,x)/\partial \nu_x$ at the boundary $\partial B_R$ by making use of  barrier 
functions inspired by the proof of \cite[Theorem 2.1]{DV}. Let 
$$v(t,x):=-C_{10}\me^{-\beta_1t}(1+|x|^2)^{-\beta_2} \,\hbox{ for } (t,x)\in [0,T]\times B_R, $$
where $C_{10}$, $\beta_1$ and $\beta_2$ are positive constants  independent of $R$, to be chosen later. 
It is straightforward to verify that
$$v_t=-\beta_1 v\quad \hbox{and} \quad\Delta v \geq   4(\beta_2^2+\beta_2)v \,\,\hbox{ for } (t,x)\in [0,T]\times B_R, $$
and thus,
\begin{equation*} 
 v_t+d e_m \Delta v+e_m \ell_m v\geq  -v \Big (\beta_1-4(\beta_2^2+\beta_2)de_m-e_m\ell_m \Big) \,\hbox{ for } (t,x)\in [0,T]\times B_R.
 \end{equation*} 
Due to \eqref{appcond} and the fact that $f(t,x)=0$ for all $|x|\geq R_0$ and $0\leq t \leq T$, we may choose 
$$\beta_1= 4(\beta_2^2+\beta_2)dC_1+C_1^2+1,$$
and 
$$ C_{10}=\me^{\beta_1 T}\big(1+R_0^2\big)^{\beta_2} \max_{(t,x)\in [0,T]\times B_{R_0} } |f(t,x)|,$$
 and thus conclude that
\begin{equation*} \left \{ \begin{array}{ll} 
\medskip v_t+d e_m \Delta v+e_m \ell_m v \geq f  \;\;\;&
\mbox{in $[0,T)\times B_R$},\\
\displaystyle\medskip v<0 \;\;\; &\mbox{on $\{T\} \times B_R$},\\
v < 0 \;\; &\mbox{on $[0,T] \times \partial B_R$}.
\end{array} \right.
\end{equation*}
It then follows from the parabolic maximum principle applied to problem \eqref{backward} that 
\begin{equation}\label{phimup}
 v(t,x) \leq  \phi_m(t,x)\leq 0  \,\hbox{ for } (t,x)\in [0,T)\times B_R.
\end{equation}

We next consider the function 
$$v^*(t,x):=\frac{-C_{10}\me ^{-\beta_1 t} \sigma(|x|)}{\sigma(R-1)\big(1+(R-1)^2\big)^{\beta_2}}\,\, \hbox{ for } (t,x)\in [0,T]\times B_R\backslash B_{R-1}, $$
where 
 $$ \sigma(\rho)=\left \{\begin{array}{ll} \displaystyle\medskip
\rho^{2-N}-R^{2-N}\;\;\;&\mbox{ if } N\geq 3,\\
 \log {\big(R/\rho\big)}  \;\;\; & \mbox{ if }  N=2.
\end{array} \right.$$
The function $v^*(t,x)$ satisfies 
$\Delta v^*=0$ and $$v^*_t+e_m \ell_m v^*=-v^*(\beta_1-e_m\ell_m)\geq 0 \mbox{
in $[0,T)\times B_R\backslash B_{R-1}$.}
$$
  Since $f(t,x)=0$ for all $0\leq t\leq T$ and $x\in B_R\backslash B_{R-1}$ (note that $R>R_0+1$), it then follows that 
$$ v^*_t+d e_m \Delta v^*+e_m \ell_m v^* \geq f  \, \hbox{ in $[0,T)\times B_R\backslash B_{R-1}$}.$$
On the other hand, we have, in view of \eqref{phimup},
\begin{equation*} \left \{ \begin{array}{ll} 
\displaystyle\medskip v^*<0 \;\;\; &\mbox{on $\{T\} \times B_R\backslash B_{R-1}$},\\
\displaystyle\medskip v^* = 0 \;\; &\mbox{on $[0,T] \times \partial B_R$},\\
v^* =-C_{10}\me^{-\beta_1 t}\big(1+(R-1)^2\big)^{-\beta_2}=v \leq \phi_m \;\; &\mbox{on $[0,T] \times \partial B_{R-1}$}.
\end{array} \right.
\end{equation*}  
It then follows from the parabolic maximum principle again that
$$ v^*(t,x) \leq \phi_m(t,x)\leq 0 \,\,\hbox{ for }  (t,x)\in [0,T]\times B_R\backslash B_{R-1}.$$
Since both $\phi_m$ and $v^*$ vanish on $\partial B_R$, we thus obtain
$$\frac{\partial v^*(t,x)}{\partial \nu_x} \geq  \frac{\partial \phi_m(t,x)}{\partial \nu_x}\geq 0\,\, \hbox{ for }  (t,x)\in [0,T] \times \partial B_{R}.$$
This together with the estimate \eqref{estikmau} implies that
\begin{equation*}
\left. \begin{array}{ll} 
\displaystyle\medskip
\limsup_{\epsilon\to 0} K_m(\epsilon) &\displaystyle\medskip \leq \,\,C_9 R^{N-1} \sup_{0\leq t\leq T,\,x\in\partial B_R} \Big|\frac{\partial v^*(t,x)}{\partial \nu_x} \Big|\\
&\,\,\displaystyle\medskip = \frac{C_9C_{10}R^{N-1}|\sigma'(R)| }{\sigma(R-1)\big(1+(R-1)^2\big)^{\beta_2}}.
\end{array} \right.
\end{equation*} 
Thus, if we take $\beta_2=N/2$, then there exists some positive constant $C_{11}=C_{11}(N, C_9, C_{10})$ independent of $R$ and $m$ such that 
\begin{equation}\label{estikm}
\limsup_{\epsilon\to 0} K_m (\epsilon) \leq  C_{11} R^{-1}.
\end{equation} 

\smallskip \noindent
{\bf Step 3:} {\it Completion of the proof.}

 Combining the estimates in \eqref{estiim}, \eqref{estijm} and \eqref{estikm}, we obtain from \eqref{mainesti} that  
\begin{equation*}
\left.\begin{array}{ll}
& \displaystyle \medskip  \int_0^T \int_{\R^N}\big[\alpha_2(u_2)- \alpha_1(u_1)\big] f dxdt \\
\leq &\displaystyle \medskip   C_6 \big\| e_m-e\big\|_{L^2([0,T]\times B_R)}^{\frac{1}{2}} + C_8\Big( \big\| \ell_m-\ell\big\|_{L^2([0,T]\times B_R)} + \big\| e_m-e\big\|_{L^2([0,T]\times B_R)}  \Big) + C_{11}R^{-1}.
\end{array}\right. 
\end{equation*}
In view of  \eqref{appfns} and the facts that $C_6,\,C_8$ depend on $R$ but not on $m$ and that $C_{11}$ is independent of $m$ and $R$, passing to the limit as $m\to\infty$ followed by letting $R\to\infty$ in the above inequality, we obtain 
\begin{equation*}
 \int_0^T \int_{\R^N}\big[\alpha_2(u_2)- \alpha_1(u_1)\big] f dxdt \leq 0.
\end{equation*}
Since it holds for all nonnegative smooth function $f(t,x)$ with compact support in $x$, it follows that $\alpha_2(u_2)\leq \alpha_1(u_1)$, and hence in the a.e. sense  $u_1(t,x)>0$ whenever $u_2(t,x)>0$, and $u_1\geq u_2$ a.e. in $H_T$. The proof of Theorem \ref{unique} is now complete.
\end{proof}

We now consider the existence of weak solutions of problem \eqref{eqmain} over $H_T$. 

\begin{thm}\label{existence}
There exists a unique weak solution $w$ of \eqref{eqmain} over $H_T$.
\end{thm}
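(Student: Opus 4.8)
The plan is to realize $w$ as the increasing limit of weak solutions of \eqref{eqmain} with \emph{bounded} initial ranges, for which the theory of \cite{DG2} applies; uniqueness is already furnished by Theorem \ref{unique}, so only existence is at issue. Choose bounded Lipschitz domains $\Omega_0^n$ with $\overline{\Omega_0^n}\subset\Omega_0^{n+1}$ and $\bigcup_n\Omega_0^n=\Omega_0$, and functions $u_0^n$ satisfying \eqref{assumeu0} relative to $\Omega_0^n$ with $0\le u_0^n\le u_0^{n+1}\le u_0$ and $u_0^n\to u_0$ locally uniformly. By \cite{DG2} there is a unique weak solution $w_n$ of \eqref{eqmain} over $H_T$ with data $(u_0^n,\Omega_0^n)$; extended by $0$ outside its (bounded) positivity set, and since $\alpha(\tilde u_0^{n})\le\alpha(\tilde u_0^{n+1})$ while test functions are nonnegative, $w_n$ is a weak subsolution of the problem with data $(u_0^{n+1},\Omega_0^{n+1})$, so Theorem \ref{unique} (with $\mu_1=\mu_2=\mu$) gives $w_n\le w_{n+1}$ a.e. Likewise, by \eqref{assumeg}(iv) the spatially constant function $\|u_0\|_{L^\infty(\Omega_0)}\me^{Kt}$ is a weak supersolution of each of these problems, whence $0\le w_n\le M:=\|u_0\|_{L^\infty(\Omega_0)}\me^{KT}$ a.e. in $H_T$ for all $n$. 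Therefore $w_n\uparrow w$ pointwise a.e.\ with $0\le w\le M$, and $w_n\to w$ in $L^2_{loc}(H_T)$ by dominated convergence.

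\textbf{Step 2 (uniform local estimates).} For every fixed $R>0$ and all large $n$ one needs $\|\nabla w_n\|_{L^2([0,T]\times B_R)}\le C(R)$ with $C(R)$ independent of $n$. This comes from a localized energy estimate, carried out on the nondegenerate penalized approximations of \cite{Ka,Fr1,DG2} and then passed to the limit: testing with $\xi^2 w_n$ for a spatial cutoff $\xi\equiv 1$ on $B_R$, the $L^\infty$ bound of Step 1 and $g(x,w_n)\le Kw_n$ absorb the lower order terms while the $\partial_t\alpha$-term is controlled in the usual way for one-phase Stefan problems. To make the constants genuinely $n$-independent rather than dependent on $\Omega_0^n$, one controls $w_n$ away from $\Omega_0^n$ by barrier functions of the same type used in Step 2 of the proof of Theorem \ref{unique} (cf.\ also \cite{DV}). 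Along a subsequence (hence, the weak limit being determined by the $L^2_{loc}$ limit $w$, for the full sequence) $\nabla w_n\rightharpoonup\nabla w$ weakly in $L^2_{loc}(H_T)$, so $w\in H^1_{loc}(H_T)\cap L^\infty(H_T)$.

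\textbf{Step 3 (passage to the limit).} Fix $\phi\in C^\infty(H_T)$ with compact $x$-support and $\phi=0$ on $\{T\}\times\R^N$, and write \eqref{eqweak} for each $w_n$. The diffusion term converges by the weak $L^2_{loc}$ convergence of $\nabla w_n$; the initial term converges since $\alpha(\tilde u_0^n)\to\alpha(\tilde u_0)$ a.e.\ and boundedly; the reaction term converges by dominated convergence, $g$ being continuous and $0\le w_n\le M$. For $\int_0^T\!\!\int_{\R^N}\alpha(w_n)\phi_t\,dx\,dt$ we use that the selections $\alpha(w_n)$ are bounded in $L^\infty(H_T)$, so along a subsequence $\alpha(w_n)\rightharpoonup\beta$ weak-$*$; on $\{w>0\}$ one has $w_n>0$ eventually and $\alpha(w_n)=w_n\to w$, while on $\{w=0\}$ the bounds $-d\mu^{-1}\le\alpha(w_n)\le 0$ pass to $-d\mu^{-1}\le\beta\le 0$, so $\beta$ is an admissible choice of $\alpha(w)$ and $\int_0^T\!\!\int_{\R^N}\alpha(w_n)\phi_t\,dx\,dt\to\int_0^T\!\!\int_{\R^N}\beta\,\phi_t\,dx\,dt$. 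Passing to the limit in \eqref{eqweak} then shows that $w$, with the selection $\alpha(w):=\beta$, is a weak solution of \eqref{eqmain} over $H_T$; by Theorem \ref{unique} it is the unique one.

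The step I expect to be the main obstacle is Step 2: the energy bounds coming from the bounded-range theory of \cite{DG2} depend a priori on $\Omega_0^n$ and could blow up as $n\to\infty$, so extracting bounds depending only on $R$, $T$, $\|u_0\|_{L^\infty(\Omega_0)}$ and the constant $K$ in \eqref{assumeg}(iv) is exactly where the unboundedness of $\Omega_0$ bites and where the barrier arguments of the uniqueness proof are reused.
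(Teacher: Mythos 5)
Your route is genuinely different from the paper's: you exhaust the unbounded $\Omega_0$ by bounded ranges and lean on the bounded-range theory of \cite{DG2} together with the comparison Theorem \ref{unique} and a monotone limit, whereas the paper regularizes the graph $\alpha$ by smooth $\alpha_m$, solves the nondegenerate problem \eqref{exappx} on all of $\R^N$ (itself obtained by exhaustion with balls, Lemma \ref{exapplem}), proves uniform \emph{local} energy estimates for the smooth approximants $w_m$ (estimate \eqref{exh1n} and Lemma \ref{lmesgw}), and then passes to the limit in $m$. Your Steps 1 and 3 are essentially sound: the monotonicity argument via Theorem \ref{unique} is legitimate (modulo the small verification, converse to the remark following Definition \ref{defweak}, that the \cite{DG2} solution on a bounded cylinder, extended by zero with the convention $\alpha\equiv-d\mu^{-1}$ outside, is a weak solution over $H_T$ in the sense of Definition \ref{defweak}; this uses that the positivity set stays in a fixed ball up to time $T$), and your identification of the weak-$*$ limit $\beta$ as an admissible selection of $\alpha(w)$ is correct.

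The genuine gap is Step 2, which you flag but do not discharge. First, the estimates cannot be run on the weak solutions $w_n$ themselves (they are only $H^1_{loc}$ and $\alpha(w_n)$ jumps), so they must be carried out on the penalized approximations underlying the construction in \cite{DG2}; doing this with spatial cutoffs and constants independent of $n$ is precisely the content of the paper's Lemmas \ref{exapplem} and \ref{lmesgw}, and it does not follow from citing \cite{Ka,Fr1,DG2} as a black box, since the global estimates there depend on the bounded domain and on $\|\tilde u_0^n\|_{H^1}$ — your scheme relocates the technical core rather than avoiding it. Second, and concretely, testing with $\xi^2 w_n$ yields only the spatial bound $\int_0^T\!\int_{B_R}|\nabla_x w_n|^2\,dx\,dt\le C(R)$; but Definition \ref{defweak} demands $w\in H^1_{loc}(H_T)$, i.e.\ also a local $L^2$ bound on $\partial_t w_n$, which your argument never produces — weak convergence of $\nabla_x w_n$ alone does not give $w\in H^1_{loc}(H_T)$. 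The paper obtains the missing bound by additionally multiplying the regularized equation by $\partial_t w_m\,\xi^2$ (proof of Lemma \ref{exapplem}, leading to \eqref{exh1n}, and Lemma \ref{lmesgw}), at the cost of a local $H^1$ bound on the initial data; you would need the analogous computation at the level of the penalized problems for each $n$. Finally, the barrier functions from Step 2 of the proof of Theorem \ref{unique} control the normal derivative of the \emph{backward dual} solution at $\partial B_R$ and are not what makes the forward energy constants $n$-independent; that role is played by the uniform $L^\infty$ bound from your Step 1, the locality of the cutoff, and the fact that $u_0^n=u_0$ on any fixed ball for large $n$.
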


The proof of this theorem also follows the approximation arguments used in \cite{DG2,Fr1} and we only provide the details where considerable changes are required. Before giving the proof, we first introduce some notations and approximation functions. 
 Let $\{\alpha_m (w)\}_{m\in\mathbb{N}}$ be a sequence of smooth functions such that 
 \begin{equation*}
 \alpha_m (w) \to \alpha (w) \hbox{ uniformly in any compact subset of } \R^1 \backslash \{0\},
 \end{equation*}
 and 
 \begin{equation*}
 \alpha_m (0) \to -d \mu^{-1}, \quad w-d \mu^{-1} \leq \alpha_m (w) \leq w \hbox{ for all } w \in \R^1.
 \end{equation*}
We may choose $\alpha_m (w)$ in such a way that
\begin{equation}\label{restriction} 
\alpha_m'(w) \geq 1.
\end{equation}

We now consider the following sequence of approximating problems:
\begin{equation}\label{exappx} 
\left \{ \begin{array}{ll} 
 \displaystyle\medskip\frac{\partial \alpha_m
(w)}{\partial t}-d \Delta w=g(x, w) \;\;\;& \mbox{in $(0,T]\times \R^N$},\\
 \displaystyle w (0,x)={\tilde u}_0 (x) \;\;\; &\mbox{in $\R^N$}.
\end{array} \right.
\end{equation}

  For any fixed $m\in\mathbb{N}$, we call a function ${w}$ a {\it bounded supersolution} to problem \eqref{exappx}  if 
${w}\in C^{1,2}((0,T]\times\R^N)\cap C(H_T)\cap L^{\infty}(H_T)$  and it satisfies 
\begin{equation}\label{exappsup}
\left \{ \begin{array}{ll} 
 \displaystyle\medskip\frac{\partial \alpha_m
({w})}{\partial t}-d \Delta {w}\geq g(x, {w}) \;\;\;& \mbox{in $(0,T]\times \R^N$},\\
 \displaystyle {w}(0,x)\geq {\tilde u}_0 (x) \;\;\; &\mbox{in $\R^N$}.
\end{array} \right.
\end{equation}
Such a $w$ is called a {\it bounded subsolution} to problem \eqref{exappx} if the reversed inequalities in \eqref{exappsup} are satisfied. We have the following comparison result.

\begin{lem}\label{exappcom} 
Let $m\in \mathbb{N}$ be fixed. 
Assume that $\overline{w}$ and $\underline{w}$ are bounded supersolution and subsolution of problem \eqref{exappx}, respectively. Then $\overline{w}\geq \underline{w}$ in $H_T$.
\end{lem}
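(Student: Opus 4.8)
The plan is to use the classical weak maximum principle for a linear uniformly parabolic operator after writing down the equation satisfied by the difference. Set $w := \overline{w} - \underline{w}$, which is bounded and continuous on $H_T$, smooth for $t>0$, and nonnegative at $t=0$. Subtracting the two differential inequalities in \eqref{exappsup}, the term $g(x,\overline w) - g(x,\underline w)$ can be written as $c(t,x)\,w$ with $c \in L^\infty$ by the local Lipschitz condition \eqref{assumeg}(ii), using that $\overline w,\underline w$ are bounded. Likewise, writing $\alpha_m(\overline w) - \alpha_m(\underline w) = a_m(t,x)\,w$ with $a_m(t,x) = \int_0^1 \alpha_m'(\underline w + s(\overline w - \underline w))\,ds$, we have $a_m \ge 1$ by \eqref{restriction}, and $a_m$ is smooth and positive. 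Hence $w$ satisfies, in $(0,T]\times\R^N$,
\[
a_m(t,x)\,w_t + (\partial_t a_m)\,w - d\,\Delta w \ge c(t,x)\,w,
\]
i.e. $w$ is a supersolution of a linear uniformly parabolic equation $a_m w_t - d\Delta w - \big(c - \partial_t a_m\big)w \ge 0$ with bounded coefficients (here I use $w_t, \Delta w \in C(\overline{\mathbb R^N})$ for each $t$, and the smoothness of $a_m \circ$ a $C^{1,2}$ function). After dividing by $a_m \ge 1$ this is $w_t - \tilde d\,\Delta w - \tilde c\, w \ge 0$ with $\tilde d = d/a_m$ bounded between positive constants and $\tilde c$ bounded.

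Next I would reduce to a genuine maximum principle by the substitution $z := e^{-\lambda t} w$ for $\lambda$ larger than $\|\tilde c\|_\infty$, so that $z$ satisfies $z_t - \tilde d\,\Delta z + (\lambda - \tilde c)\,z \ge 0$ with $\lambda - \tilde c \ge 0$, $z(0,\cdot)\ge 0$, and $z$ bounded on $H_T$. Then apply the Phragmén–Lindelöf / Gilbarg–Serrin type maximum principle for bounded sub/supersolutions of parabolic equations on unbounded domains: a bounded function satisfying such an inequality with nonnegative zeroth-order coefficient and nonnegative initial data is nonnegative throughout. Concretely, one argues by contradiction: if $\inf z < 0$, compare $z$ with the auxiliary function $\eta_\delta(t,x) := -\delta\,e^{Kt}(1+|x|^2)$ for suitable $K$ (a standard growth barrier), note $z + \eta_\delta$ attains its negative infimum at an interior point for small $\delta$ since $z$ is bounded, and derive a contradiction with the differential inequality at that point. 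Letting $\delta \to 0$ gives $z \ge 0$, hence $w \ge 0$, i.e. $\overline w \ge \underline w$ in $H_T$.

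The main obstacle is the unboundedness of the spatial domain: one cannot invoke the elementary maximum principle on a bounded cylinder directly, and must instead control the behavior at spatial infinity. This is precisely why boundedness of $\overline w$ and $\underline w$ (built into the definition of bounded sub/supersolution) is essential — it makes the growth-barrier argument work and rules out the nonuniqueness phenomena that occur for unbounded solutions of the heat equation. A mild technical point is checking that $a_m(t,x)$ and $\partial_t a_m$ are bounded on $(0,T]\times\R^N$; since $\overline w,\underline w$ take values in a fixed bounded interval and $\alpha_m' \ge 1$ is smooth, $\alpha_m'$ is bounded on that interval, giving $1 \le a_m \le C_m$, and $\partial_t a_m$ is controlled using the $C^{1,2}$ regularity of $\overline w,\underline w$ together with local parabolic estimates — or, to avoid differentiating $a_m$, one can instead argue directly from the integrated (weak) form, testing the difference of the inequalities in \eqref{exappsup} against $(z)^-$ truncated by the cutoffs $\xi_\epsilon$ of \eqref{modifier1}, exactly as in the proof of Theorem \ref{unique}. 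Either route works; I would present the barrier argument as the cleaner one, noting that the details are standard and follow e.g. \cite{DG2,Fr1,LSU}.
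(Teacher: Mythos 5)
The linearization step is where your argument breaks down. Writing $\alpha_m(\overline w)-\alpha_m(\underline w)=a_m\,w$ and expanding $\partial_t(a_m w)=a_m w_t+(\partial_t a_m)w$ produces the zeroth-order coefficient $\partial_t a_m=\int_0^1\alpha_m''\big(\underline w+s\,w\big)\big(\underline w_t+s\,w_t\big)\,ds$, which involves the time derivatives of $\overline w$ and $\underline w$. But the definition of a bounded super/subsolution of \eqref{exappx} only requires $\overline w,\underline w\in C^{1,2}((0,T]\times\R^N)\cap C(H_T)\cap L^{\infty}(H_T)$: there is no $L^\infty$ bound on $\overline w_t,\underline w_t$, neither as $|x|\to\infty$ nor as $t\to 0^+$. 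Your suggestion to control this by ``local parabolic estimates'' does not work, because $\overline w$ and $\underline w$ satisfy only differential \emph{inequalities}, not equations, so no interior estimate applies; and even for equations one would need uniformity in $x$ over all of $\R^N$. Consequently $\tilde c=(c-\partial_t a_m)/a_m$ may be unbounded, no admissible $\lambda>\|\tilde c\|_\infty$ exists, and the Phragm\'en--Lindel\"of barrier argument collapses: at the would-be interior minimum of $e^{-\lambda t}w+\eta_\delta$ the uncontrolled term $[\alpha_m'(\overline w)-\alpha_m'(\underline w)]\,\underline w_t$ can have arbitrary size and sign, so no contradiction with the differential inequality can be extracted. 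This is precisely the obstruction that forces the comparison to be carried out in the weak (integrated) form.

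Your fallback remark points in the right direction but misdescribes it. The paper's proof passes to the integral inequality \eqref{weakapp} (and its reverse for $\underline w$), subtracts, and then reruns the duality machinery of the proof of Theorem \ref{unique}: one tests against the solution $\phi_m$ of a smooth backward adjoint problem, so that all derivatives fall on $\phi_m$, and only $\|\overline w\|_{L^\infty},\|\underline w\|_{L^\infty}$, $0\le\tilde e\le 1$ (from $\alpha_m'\ge 1$, \eqref{restriction}) and $\tilde\ell\in L^\infty$ (from \eqref{assumeg}) are needed — exactly avoiding any bound on $\overline w_t,\underline w_t$. Testing the difference against a negative part, as you propose, is a different (energy) argument and is itself delicate here: the pairing $\int\partial_t\big[\alpha_m(\overline w)-\alpha_m(\underline w)\big]\,(w)^-$ is not an exact time derivative, while the alternative test function $\big(\alpha_m(\overline w)-\alpha_m(\underline w)\big)^-$ destroys the sign of the gradient term, since $\nabla_x w\cdot\nabla_x\big[\alpha_m(\overline w)-\alpha_m(\underline w)\big]$ need not be nonnegative. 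So either supplement your route with (justified) global bounds on the derivatives of the comparison functions, or — as the paper does — derive the weak-form difference inequality and invoke the argument of Theorem \ref{unique} verbatim.
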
 

\begin{proof}
It is easily verified from \eqref{exappx} that $\overline{w}$ satisfies 
\begin{equation}\label{weakapp} 
\left.\begin{array}{ll} 
 \displaystyle\medskip \int_0^T \int_{\R ^N} \Big[d \nabla_x \overline{w} \cdot \nabla_x\phi-\alpha_m (\overline{w}) \phi_t\Big] dx dt
\!\!& \displaystyle\medskip-\,\int_{\R ^N}\alpha_m(\overline{w}(0,x))\phi (0,x) dx\\
 \!\!& \displaystyle\medskip \geq\, \int_0^T \int_{\R ^N} g(x,\overline{w}) \phi dx dt
\end{array}\right.
\end{equation}
for every nonnegative function $\phi \in C^{\infty}(H_T)$ such that $\phi$ has compact support and  
$\phi=0$ on $\{T\} \times \R^N$. Analogously, $\underline{w}$ satisfies \eqref{weakapp} with the inequality sign reversed. Subtracting these two inequalities, due to \eqref{restriction} and the fact that $\overline{w}(0,x)\geq \underline{w}(0,x)$ in $\R^N$, we obtain 
\begin{equation*}
 \int_0^T \int_{\R^N} \big[\alpha_m(\underline{w})- \alpha_m(\overline{w})\big]\big(\phi_t+d\tilde{e}\Delta \phi +\tilde{e}\tilde{\ell}\phi \big) dxdt \geq 0, 
\end{equation*}  
where 
$$ \tilde{\ell} (t,x)=\left \{ \begin{array}{ll} \displaystyle\medskip\frac{g(x,\underline{w} (t,x))-g(x,
\overline{w} (t,x))}{\underline{w} (t,x)-\overline{w} 
(t,x) (t,x)} \;\;& \mbox{if $\underline{w}(t,x) \neq \overline{w} 
(t,x)$}, \\
0 \;\;& \mbox{if $\underline{w} (t,x)=\overline{w} (t,x)$},
\end{array} \right.$$
and
$$\tilde{e}(t,x)=\left \{ \begin{array}{ll} \displaystyle\medskip\frac{\underline{w} (t,x)-\overline{w} 
(t,x)}{\alpha_m (\underline{w} (t,x))-\alpha_m (\overline{w}  (t,x))} \;\;\; &\mbox{if
$\underline{w} (t,x) \neq \overline{w}  (t,x)$},\\
0 \;\;\; &\mbox{if $\underline{w} (t,x)=\overline{w} (t,x)$}.
\end{array} \right.$$
Since $\overline{w},\,\underline{w} \in L^{\infty}(H_T)$ and since $\alpha_m'(w) \geq 1$ by \eqref{restriction}, it then follows from the same arguments as those used in the proof of Theorem \ref{unique} that 
$$\alpha_m(\underline{w}(t,x))- \alpha_m(\overline{w}(t,x)) \leq 0 \hbox{ for almost all } (t,x)\in H_T.$$ This immediately gives 
$$\underline{w}(t,x) \leq \overline{w}(t,x) \hbox{ for all  } (t,x)\in H_T, $$ 
as $\alpha_m(w)$, $\underline{w}(t,x)$ and $\overline{w}(t,x)$ are all continuous functions, and  $\alpha_m(w)$ is increasing in $w$. The proof of Lemma \ref{exappcom} is thus complete.
\end{proof}

Clearly, $\underline{w}\equiv 0$ is a subsolution of \eqref{exappx}.  On the other hand, by the condition \eqref{assumeg} on $g$ and the property \eqref{restriction} of $\alpha_m$, we can conclude that the function $\overline{w}(t)$ is a bounded supersolution of \eqref{exappx}, where 
\begin{equation}\label{deuperw}
\overline{w}(t)=\|\tilde{u}_0\|_{L^{\infty}(\R^N)} \me^{Kt} \, \hbox{ for } 0\leq t\leq T
\end{equation}
is the unique solution to  the problem 
$$\frac{d \overline{w}}{d t} =K \overline{w} \,\hbox{ for } 0<t<T, 
 \quad  \overline{w}(0)=\|\tilde{u}_0\|_{L^{\infty}(\R^N)}.$$
 We are ready to show the existence and uniqueness of classical solution to \eqref{exappx}.

\begin{lem}\label{exapplem}
Let $m\in \mathbb{N}$ be fixed and $\overline{w}(t)$ be given as in \eqref{deuperw}. Then problem \eqref{exappx} admits a unique solution $w_m\in C^{1,2}((0,T]\times \R^N)\cap C(H_T)\cap L^\infty(H_T)$. Moreover, 
\begin{equation}\label{3.24}
\mbox{$0\leq w_m(t,x) \leq \overline{w}(t)$ for $(t,x)\in H_T$,}
\end{equation}
 and for any  $R>0$, there exists a positive constant $\tilde{C}_1=\tilde{C}_1(T,R)$ independent of $m$ such that
\begin{equation}\label{exh1n}
\| w_m\|_{H^1([0,T]\times B_R)} \leq \tilde{C}_1\; \hbox{ for all } m \in \mathbb{N}.
\end{equation}
\end{lem}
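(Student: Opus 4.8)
The plan is to reduce \eqref{exappx} to a sequence of uniformly parabolic problems on bounded domains. Since $\alpha_m$ is smooth with $\alpha_m'\geq 1$ by \eqref{restriction}, the identity $\partial_t\alpha_m(w)=\alpha_m'(w)\,w_t$ shows that \eqref{exappx} is a uniformly parabolic quasilinear equation (whose parabolicity constants may depend on $m$), so that the only genuine difficulties are that it lives on all of $\R^N$ and that $\tilde u_0$ is merely continuous and bounded. Accordingly I would first exhaust $\R^N$ by balls $B_k$ and, on $Q_k:=(0,T]\times B_k$, solve the initial--boundary value problem consisting of the equation in \eqref{exappx}, the lateral condition $w=0$ on $(0,T]\times\partial B_k$, and the initial condition $w(0,\cdot)=\tilde u_0^{\,k}$, where $\tilde u_0^{\,k}\in C(\overline{B_k})$ equals $\tilde u_0$ on $B_{k-1}$, vanishes near $\partial B_k$, and satisfies $0\le\tilde u_0^{\,k}\le\|\tilde u_0\|_{L^\infty(\R^N)}$. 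Standard parabolic theory for quasilinear uniformly parabolic equations with bounded coefficients (\cite{LSU}; if one wishes to work with merely continuous initial data, mollify $\tilde u_0^{\,k}$, solve, and pass to a limit as in \cite{Ka}) provides a unique $w_{m,k}\in C^{1,2}(Q_k)\cap C(\overline{Q_k})$, and the comparison principle for such problems (the bounded-domain analogue of Lemma~\ref{exappcom}) applied with the sub/supersolution pair $0$ and $\overline w(t)$ of \eqref{deuperw} gives $0\le w_{m,k}(t,x)\le\overline w(t)$ on $\overline{Q_k}$.

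Next I would establish estimates uniform in both $m$ and $k$. Fix $R>0$ and let $k>R+1$, so $\tilde u_0^{\,k}=\tilde u_0$ on $B_{R+1}$. On $[0,T]\times B_{R+1}$ the functions $w_{m,k}$ are bounded by $\overline w(T)$ and satisfy an equation with bounded coefficients (for each fixed $m$); interior parabolic regularity then yields local bounds in $C^{1+\gamma/2,\,2+\gamma}_{loc}\big((0,T]\times\R^N\big)$, while barrier functions built from $\overline w$, the modulus of continuity of $\tilde u_0$, and the bound on $g$ over the range $[0,\overline w(T)]$ furnished by \eqref{assumeg} give equicontinuity of $\{w_{m,k}\}$ up to $t=0$ on each $B_R$. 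For the quantitative bound \eqref{exh1n}, which is the one required to be $m$-independent, I would test the equation against $w_{m,k}\zeta^2$ with a fixed cutoff $\zeta\in C_0^\infty(B_{R+1})$, $\zeta\equiv1$ on $B_R$: writing $A_m(s)=\int_0^s r\,\alpha_m'(r)\,dr=s\alpha_m(s)-\int_0^s\alpha_m(r)\,dr$, the inequalities $s-d\mu^{-1}\le\alpha_m(s)\le s$ make $|A_m|$ bounded on $[0,\overline w(T)]$ \emph{uniformly in $m$}, so, using $0\le w_{m,k}\le\overline w(T)$, the bound on $g$, and $\tilde u_0\in H^1_{loc}$, one obtains $\|\nabla w_{m,k}\|_{L^2([0,T]\times B_R)}\le C(T,R)$ with $C$ independent of $m$ and $k$; testing once more against $(\partial_t w_{m,k})\zeta^2$ and using $\alpha_m'\ge1$ together with $\tilde u_0\in H^1_{loc}$ bounds $\|\partial_t w_{m,k}\|_{L^2([0,T]\times B_R)}$, and \eqref{exh1n} follows.

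Then a diagonal argument over $R=1,2,\dots$ extracts a subsequence with $w_{m,k_j}\to w_m$ locally uniformly on $(0,T]\times\R^N$ and in $C([0,T]\times B_R)$ for every $R$, and $\nabla w_{m,k_j}\rightharpoonup\nabla w_m$ in $L^2_{loc}$. Passing the interior estimates to the limit shows that $w_m\in C^{1,2}((0,T]\times\R^N)$ solves \eqref{exappx} in the classical sense; the convergence up to $t=0$ gives $w_m\in C(H_T)$ with $w_m(0,\cdot)=\tilde u_0$; and \eqref{3.24} and \eqref{exh1n} pass to the limit (the latter by lower semicontinuity of the norm under weak convergence, or by rerunning the energy computation directly for $w_m$). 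Uniqueness is then immediate: any two such solutions are each simultaneously a bounded supersolution and a bounded subsolution of \eqref{exappx}, so Lemma~\ref{exappcom} forces them to coincide.

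The main obstacle is the interaction between the unbounded domain and the weak regularity of $\tilde u_0$: one must arrange all the estimates to be uniform both in the regularization index $m$ and in the exhaustion radius $k$, and one must check that the limit attains $\tilde u_0$ continuously --- this is the barrier step at $t=0$ --- and that the local energy bound \eqref{exh1n} does not degenerate as $m\to\infty$, the decisive point there being that $A_m$ stays bounded on $[0,\overline w(T)]$ uniformly in $m$ because $s-d\mu^{-1}\le\alpha_m(s)\le s$. The rest is a routine adaptation of the approximation arguments in \cite{DG2,Fr1,Ka}.
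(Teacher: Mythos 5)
Your proposal is correct and follows the same skeleton as the paper's proof: exhaust $\R^N$ by balls $B_k$, solve the truncated problem with zero Dirichlet data, sandwich the approximants between $0$ and $\overline w(t)$ by comparison, obtain the $m$-independent bound \eqref{exh1n} by testing with a cutoff against the solution and against its time derivative (your uniform control of $A_m$ via $s-d\mu^{-1}\le\alpha_m(s)\le s$ and $\alpha_m'\ge 1$ plays exactly the role of the paper's multiplication by $\alpha_m(w_m)\xi^2$ following \cite[Lemmas 5.1, 5.2]{DG2}), and deduce uniqueness from Lemma \ref{exappcom}. The one genuine divergence is how the limit $k\to\infty$ is taken. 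The paper keeps the initial datum $\tilde u_0$ unmodified on each $B_k$; since the domains are nested and the lateral datum is zero, the comparison result for \eqref{exapptru} gives $v_k\le v_{k+1}$, so the limit exists by monotone pointwise convergence and no diagonal extraction, interior Schauder-type compactness, or barrier construction at $t=0$ is needed -- only a ``standard regularity consideration'' to see that the monotone limit solves \eqref{exappx} and attains the initial data. You instead cut off $\tilde u_0$ near $\partial B_k$, which destroys this monotonicity and obliges you to run the heavier compactness route (interior $C^{1+\gamma/2,2+\gamma}$ bounds for fixed $m$, equicontinuity up to $t=0$ via barriers, diagonal subsequence); this is correct and has the mild advantage of avoiding the corner incompatibility at $\{0\}\times\partial B_k$ that the paper sidesteps by asserting continuity only on $[0,T]\times B_k$, but it costs extra work that the monotonicity argument makes unnecessary. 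You also correctly isolate which estimates must be uniform in $m$ (only \eqref{exh1n}) versus those that may depend on $m$, which matches the paper's logic.
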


\begin{proof}
The existence part must be a known result, but we failed  to find a proof in the literature. So we include a proof here.
Note that the uniqueness part follows directly from Lemma \ref{exappcom}. 

For each $k\in\mathbb{N}$, let $B_{k} \in\R^N$ be the ball with center at $0$ and radius $k$.  Consider the following initial boundary value problem 
\begin{equation}\label{exapptru} 
\left \{ \begin{array}{ll} 
\displaystyle\medskip\frac{\partial \alpha_m
(v)}{\partial t}-d \Delta v=g(x, v) \;\;\;& \mbox{in $(0,T]\times B_k$},\\
\displaystyle\medskip v(t,x)=0 \;\;\; &\mbox{on $(0,T]\times \partial B_k$},\\
 \displaystyle v (0,x)={\tilde u}_0 (x) \;\;\; &\mbox{in $B_k$}.
\end{array} \right.
\end{equation}
It is well known that, for each $k\in\mathbb{N}$, problem \eqref{exapptru} admits a unique solution $v_k \in C([0,T]\times {B_k})\cap C^{1,2}((0,T]\times \overline B_k)$ (see e.g., \cite{LSU}). Moreover, it follows
from the comparison result given in \cite[Lemma 3.2]{DG2} that 
\begin{equation*}
0\leq v_k(t,x) \leq  \overline{w}(t)  \hbox{ for all } (t,x)\in [0,T]\times B_k.
\end{equation*}
It thus follows that 
\begin{equation*}
0\leq v_k(t,x) \leq  \tilde{C}_2:=\|\tilde{u}_0\|_{L^{\infty}(\R^N)} \me^{KT}   \hbox{ for all } (t,x)\in [0,T]\times B_k.
\end{equation*}
The comparison argument also gives $v_k(t,x)\leq v_{k+1}(t,x)$ for $t\in [0, T]$ and $x\in B_k$. Thus
\[
v(t,x):=\lim_{k\to\infty} v_k(t,x) \mbox{ exists for every $x\in \R^N$ and $t\in [0, T]$},
\]
and $0\leq v(t,x)\leq \overline w(t)$. By a standard regularity consideration, one sees that $v$ solves \eqref{exappx}, and \eqref{3.24} holds. To mark its dependence on $m$, we will denote $v=w_m$.

Next, we show the estimate \eqref{exh1n}. Clearly, we have
\begin{equation}\label{exbound}
0\leq w_m(t,x)\leq \tilde{C}_2 \hbox{ for } (t,x)\in H_T.  
\end{equation}
For any fixed  $R>0$,  let $\xi (x)$ be a smooth function such that
$$0 \leq \xi \leq 1 \hbox{ in } \R^N,\; \;\; \xi \equiv 1 \;\; \mbox{in $B_{R}$},
\,\hbox{ and }\, \xi \equiv 0 \;\; \mbox{in $\R^N \backslash B_{2R}$}.$$ 
If we multiply the differential equation in \eqref{exappx}  by $\alpha_m(w_m)\xi^2$, then by following the proof of \cite[Lemma 5.1]{DG2}, we obtain 
$$\int_0^T \int_{B_{R}} \big|\nabla_x w_m  \big|^2\leq \tilde{C}_3,  $$
for some positive constant $\tilde{C}_3=\tilde{C}_3(R,T)$ independent of $m$.  
Furthermore, if we multiply the differential equation in \eqref{exappx}  by $\frac{\partial w_m}{\partial t} \xi^2$ and estimate the resulting equation over $[0,T]\times B_{2R}$, then the proof of \cite[Lemma 5.2]{DG2} implies that there exists a positive constant $\tilde{C}_4=\tilde{C}_4(R,T)$ such that 
$$\int_0^T \int_{B_{R}} \Big|\frac{\partial_t w_m}{\partial t}  \Big|^2\leq \tilde{C}_4.  $$
Therefore, combining the above, we obtain \eqref{exh1n}.
The proof of Lemma \ref{exapplem} is now complete.
\end{proof}

To complete the proof of Theorem \ref{existence}, we shall need the following estimate.

\begin{lem}\label{lmesgw} 
For any $R>0$, there is a positive constant $\tilde{C}_5$ depending on $R$ and $T$ but
independent of $m$ such that  
\begin{equation*} 
\int_{B_{R/2}} \Big|\nabla_x w_m (\sigma,x)\Big|^2 dx \leq \tilde{C}_5,
\; \hbox{ for all } m \in \mathbb{N}, \;\;\sigma \in [0, T].
\end{equation*}
\end{lem}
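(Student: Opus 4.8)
The plan is to derive a localized parabolic energy inequality for the approximating problem \eqref{exappx} and to close it using the uniform bounds already established in Lemma~\ref{exapplem}. Fix $R>0$ and choose a cutoff $\xi\in C_0^\infty(\R^N)$ with $0\le\xi\le1$, $\xi\equiv1$ on $B_{R/2}$ and $\xi\equiv0$ outside $B_R$. Writing $Q_\sigma:=(0,\sigma)\times B_R$, I would multiply the equation $\partial_t\alpha_m(w_m)-d\Delta w_m=g(x,w_m)$ by $(\partial_t w_m)\,\xi^2$, integrate over $Q_\sigma$, and integrate the Laplacian term by parts in $x$ (the boundary contribution on $\partial B_R$ vanishes because $\xi\equiv0$ there); since $\partial_t\alpha_m(w_m)=\alpha_m'(w_m)\,\partial_t w_m$ with $\alpha_m'(w_m)\ge1$ by \eqref{restriction}, this gives
\[
\int_{Q_\sigma}(\partial_t w_m)^2\xi^2\,dxdt+\frac d2\int_{B_R}|\nabla_x w_m(\sigma,x)|^2\xi^2\,dx
\le \frac d2\int_{B_R}|\nabla_x\tilde u_0|^2\xi^2\,dx
+2d\!\int_{Q_\sigma}\!\xi\,|\partial_t w_m|\,|\nabla_x w_m|\,|\nabla\xi|\,dxdt
+\int_{Q_\sigma}\!|g(x,w_m)|\,|\partial_t w_m|\,\xi^2\,dxdt .
\]
These manipulations are justified by a routine time-regularization, using $w_m\in C^{1,2}((0,T]\times\R^N)\cap C(H_T)$; for $\sigma>0$ one may work on $(\tau,\sigma)$ and let $\tau\to0^+$, and the value at $\sigma=0$ is just the identity involving $\nabla_x\tilde u_0$.

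Next I would absorb the last two integrals on the right by Young's inequality: $2d\,\xi|\partial_t w_m||\nabla_x w_m||\nabla\xi|\le\frac14(\partial_t w_m)^2\xi^2+4d^2|\nabla_x w_m|^2|\nabla\xi|^2$, and, since $0\le w_m\le\tilde C_2$ by \eqref{exbound} and $g(x,0)\equiv0$ with $g$ locally Lipschitz in $u$ uniformly in $x$ (so that $|g(x,w_m)|\le L\tilde C_2$ for some $L$ independent of $x$ and $m$), $|g(x,w_m)||\partial_t w_m|\xi^2\le\frac14(\partial_t w_m)^2\xi^2+(L\tilde C_2)^2\xi^2$. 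Moving the resulting $\frac12\int_{Q_\sigma}(\partial_t w_m)^2\xi^2$ to the left (and discarding it), one is left with
\[
\frac d2\int_{B_R}|\nabla_x w_m(\sigma,x)|^2\xi^2\,dx
\le \frac d2\int_{B_R}|\nabla_x\tilde u_0|^2\xi^2\,dx
+4d^2\|\nabla\xi\|_{L^\infty(\R^N)}^2\int_0^T\!\!\int_{B_R}|\nabla_x w_m|^2\,dxdt
+(L\tilde C_2)^2\,T\,|B_R| .
\]

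Finally, the space--time integral on the right is bounded, uniformly in $m$, by the $H^1$ estimate \eqref{exh1n} of Lemma~\ref{exapplem}; the first term on the right is finite and independent of $m$; and $\xi\equiv1$ on $B_{R/2}$. Hence $\int_{B_{R/2}}|\nabla_x w_m(\sigma,x)|^2\,dx\le\tilde C_5$ for all $m\in\mathbb N$ and all $\sigma\in[0,T]$, with $\tilde C_5$ depending only on $R$, $T$ and the data, as claimed. The only point requiring attention is the initial--layer term $\int_{B_R}|\nabla_x\tilde u_0|^2\xi^2$, which must be finite and $m$-independent; this is precisely what makes the bound hold uniformly down to $\sigma=0$. (If one only needed $\sigma$ bounded away from $0$, this term could instead be eliminated by testing against $t(\partial_t w_m)\xi^2$ and then selecting a good time slice from the space--time gradient bound of Lemma~\ref{exapplem}; but then the constant would depend on how far $\sigma$ is from $0$.)
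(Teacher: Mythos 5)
Your proposal is correct and follows essentially the same route as the paper: test the approximating equation \eqref{exappx} with $(\partial_t w_m)\,\xi^2$ for a cutoff $\xi$ equal to $1$ on $B_{R/2}$ and supported in $B_R$, use $\alpha_m'\ge 1$ from \eqref{restriction}, absorb the mixed and reaction terms by Young's inequality together with the $L^\infty$ bound \eqref{exbound} and assumption \eqref{assumeg}, and control the space--time gradient term uniformly in $m$ via \eqref{exh1n}. The only differences are cosmetic (choice of Young constants and the explicit remark on the initial term $\int_{B_R}|\nabla_x\tilde u_0|^2\xi^2\,dx$, which the paper handles in the same implicit way).
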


\begin{proof}  
Let $\xi (x)$ be a smooth function such that
$$0 \leq \xi \leq 1 \hbox{ in } \R^N,\; \;\; \xi \equiv 1 \;\; \mbox{in $B_{R/2}$},
\,\hbox{ and }\, \xi \equiv 0 \;\; \mbox{in $\R^N \backslash B_{R}$}.$$ 
We multiply both sides of the differential equation in
\eqref{exappx} by $\frac{\partial w_m}{\partial t} \xi^2$ and then integrate
the resulting equation over $[0, \sigma] \times B_{R}$.  After suitable
integration by parts, we obtain
\begin{equation} \label{3.9} 
\left.\begin{array}{lll} 
 \displaystyle\medskip
\int_0^\sigma \int_{B_{R}} \alpha_m' (w_m)
\Big(\frac{\partial w_m}{\partial t} \Big)^2 \xi^2 dx dt &+& \displaystyle\medskip d
\int_0^\sigma \int_{B_{R}} \nabla_x w_m \cdot \nabla_x \Big(
\frac{\partial w_m}{\partial t} \xi^2 \Big) dx dt \\
 \displaystyle & & =\displaystyle \int_0^\sigma \int_{B_{R}} g(x, w_m) \frac{\partial w_m}{\partial t} \xi^2 dxdt.\end{array}\right.
\end{equation}
Moreover, using the assumption \eqref{assumeg} on $g$ and the estimate \eqref{exbound}, we have
\begin{equation*}
\int_0^\sigma \int_{B_{R}} g(x, w_m) \frac{\partial w_m}{\partial t} \xi^2 dxdt \leq  \frac{1}{4}\int_0^\sigma \int_{B_{R}}\Big(\frac{\partial w_m}{\partial t}\Big)^2 \xi^2 dxdt \,+\, \tilde{C}_6(T, R).
\end{equation*}
On the other hand, making use of the estimate \eqref{exh1n}, we deduce that
\begin{equation*}
\left.\begin{array}{ll}
&  \displaystyle\medskip d\int_0^\sigma \int_{B_{R}} \nabla_x w_m \cdot \nabla_x \Big(
\frac{\partial w_m}{\partial t} \xi^2 \Big) dx dt\\
=& d\displaystyle\medskip \int_0^\sigma \int_{B_{R}} \xi^2 \nabla_x w_m \cdot \nabla_x
\Big(\frac{\partial w_m}{\partial t} \Big) dx dt+2d\int_0^\sigma
\int_{B_{R}} \xi \frac{\partial w_m}{\partial
t} \nabla_x w_m \cdot \nabla_x \xi dx dt\\
\geq & \displaystyle\medskip \frac{d}{2} \int_{B_{R}} \big|\nabla_x w_m (\sigma, x)\big|^2\xi^2 (x) dx -\frac{d}{2} \int_{B_{R}} \big|\nabla_x {\tilde u}_0 (x)\big|^2 \xi^2 (x) dx\\
&\displaystyle\medskip \qquad-16d^2\int_0^\sigma \int_{B_{R}}  \big(\nabla_x w_m \cdot \nabla_x \xi\big)^2 dx dt-\frac{1}{4}\int_0^\sigma \int_{B_{R}}\Big(\frac{\partial w_m}{\partial t}\Big)^2 \xi^2 dxdt\\
\geq & \displaystyle\medskip \frac{d}{2} \int_{B_{R}} \big|\nabla_x w_m (\sigma, x)\big|^2\xi^2 (x) dx -\frac{d}{2} \int_{B_{R}} \big|\nabla_x {\tilde u}_0 (x)\big|^2 \xi^2 (x) dx\\
&\displaystyle\qquad-\frac{1}{4}\int_0^\sigma \int_{B_{R}}\Big(\frac{\partial w_m}{\partial t}\Big)^2 \xi^2 dxdt \,-\, \tilde{C}_7(T, R).
\end{array}\right.
\end{equation*}
Substituting the above estimates into \eqref{3.9}, and recalling that $\alpha_m'(w) \geq 1$ from \eqref{restriction}, we obtain
\begin{equation*}
\left.\begin{array}{ll} 
& \displaystyle\medskip \frac{1}{2}\int_0^\sigma \int_{B_{R}} \Big(\frac{\partial w_m}{\partial t} \Big)^2 \xi^2 dx dt + \frac{d}{2}\int_{B_{R}} \big|\nabla_x w_m (\sigma, x)\big|^2\xi^2 (x) dx\\
\leq & \displaystyle \frac{d}{2} \int_{B_{R}} \big|\nabla_x {\tilde u}_0 (x)\big|^2 \xi^2 (x) dx + \tilde{C}_6(T, R)+\tilde{C}_7(T, R).
\end{array}\right.
\end{equation*}
Let 
$$\tilde{C}_5:=  d \int_{B_{R}} \big|\nabla_x {\tilde u}_0 (x)\big|^2 dx +2 \tilde{C}_6(T, R)+2\tilde{C}_7(T, R).
 $$
It finally follows that
$$\int_{B_{R/2}} \big|\nabla_x w_m (\sigma,x)\big|^2 dx \leq \int_{B_{R}} \big|\nabla_x w_m (\sigma,x)\big|^2 \xi^2(x) dx
\leq \tilde{C}_5 \;\; \hbox{ for all } m \in \mathbb{N}.$$ 
This completes the proof. 
\end{proof}

We are now ready to give the proof of Theorem \ref{existence}.

\begin{proof} [Proof of Theorem \ref{existence}] 
In what follows we will select various subsequences from $\{w_m\}$
and, to avoid inundation by subscripts, we will always denote the
subsequence again by $\{w_m\}$. By the estimate \eqref{exh1n}, and by
Rellich's Lemma and a standard diagonal argument,  there is a subsequence of $\{w_m\}$ (denoted by itself) and a
function $w \in H^1_{loc} (H_T)$ such that, for any $R>0$,
\begin{equation*}
w_m \to w \;\; \mbox{weakly in $H^1 ([0,T]\times B_R)$
and strongly in $L^2 ([0,T]\times B_R)$}\, \hbox{ as }  m \to \infty.
\end{equation*}
In particular, $w_m \to w$ as $m\to\infty$ and $w \geq 0$ almost everywhere in
$H_T$. Moreover, in view of \eqref{exbound}, we have $0 \leq w \leq \tilde{C}_2$ in $H_T$, and hence $w\in H^1_{loc} (H_T)\cap L^{\infty}(H_T)$.
Furthermore, Lemma \ref{lmesgw} implies that
\begin{equation*}
\int_{B_{R/2}} |\nabla_x w(t,x)|^2 dx \leq \tilde{C}_5 \;\;
\mbox{for a.e. $t \in [0,T]$},
\end{equation*}
since the set
$$\Big \{v \in H^1([0,T]\times B_{R/2}): \; \int_{B_{R/2}} \big|\nabla_x
v(t,x)\big|^2 dx \leq \tilde{C}_5 \;\; \mbox{for a.e. $t \in [0,T]$} \Big \}$$
is closed and convex in $H^1([0,T]\times B_{R/2})$, and such sets are
closed under the weak limit.

With the above preparations, and noting that the test function $\phi$ in Definition \ref{defweak} has compact support, we can follow the same lines as those used in the proof of \cite[Theorem 3.1]{DG2} to verify that $w$ is a weak solution of \eqref{eqmain}  over $H_T$. 
We do not repeat the details here.    
The proof of Theorem \ref{existence} is thereby complete.
\end{proof}

Note that since $T>0$ is arbitrary in Theorem \ref{existence}, the weak solution $u(t,x)$ of \eqref{eqmain} given over $H_T$ can be extended to all $t>0$, and it is unique due to Theorem \ref{unique}.

Next, we present a comparison principle which will be used frequently in the subsequent sections. Suppose that $\mu$ and $\hat{\mu}$ are two positive constants, $g$ and ${\hat g}$ both satisfy the assumption \eqref{assumeg}, $\Omega_0$ and ${\hat \Omega}_0$ are smooth domains in $\R^N$, $u_0$ satisfies \eqref{assumeu0} and ${\hat u}_0$ satisfies \eqref{assumeu0} with $\Omega_0$ replaced by ${\hat \Omega}_0$. Let $u$ be a weak subsolution of \eqref{eqmain} corresponding to  $(\Omega_0, u_0, g,\mu)$, and ${\hat u}$ be a weak supersolution corresponding to $({\hat \Omega}_0, {\hat u}_0, {\hat g},\hat{\mu})$, respectively.

\begin{thm} \label{thmcomp} 
Suppose that $\Omega_0 \subset {\hat \Omega}_0$, $u_0
\leq {\hat u}_0$, $\mu\leq \hat{\mu}$ and $g \leq {\hat g}$. Then $u \leq {\hat u}$ a.e. in $H_T$.
\end{thm}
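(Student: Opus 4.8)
The plan is to deduce Theorem~\ref{thmcomp} from the uniqueness/comparison Theorem~\ref{unique} by reducing everything to a single reaction term and a single exterior factor, exactly as in the uniqueness proof. First I would observe that the heart of the matter is the differential inequality \eqref{difference} (and its analogue), which was derived solely from the weak (super/sub)solution inequalities together with the monotonicity $\bar\alpha\ge 1$ and $\mu_1\ge\mu_2$; it did not actually use that $g$ was the same for the two functions, nor that $\Omega_0$ agreed. So the first step is to set $u_1:=\hat u$ (weak supersolution for $(\hat\Omega_0,\hat u_0,\hat g,\hat\mu)$) and $u_2:=u$ (weak subsolution for $(\Omega_0,u_0,g,\mu)$), and to check that under the hypotheses $\Omega_0\subset\hat\Omega_0$, $u_0\le\hat u_0$, $\mu\le\hat\mu$, $g\le\hat g$, the same inequality \eqref{difference} holds with $\alpha_1=\alpha(\cdot;\hat\mu)$, $\alpha_2=\alpha(\cdot;\mu)$, with
\[
\ell(t,x)=\begin{cases}\dfrac{\hat g(x,u_2(t,x))-\hat g(x,u_1(t,x))}{u_2(t,x)-u_1(t,x)} & u_1\ne u_2,\\[2mm] 0 & u_1=u_2,\end{cases}
\]
and $e$ defined verbatim as before. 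The point is that in forming the difference of the two weak inequalities one writes $g(x,u_2)=\hat g(x,u_2)-[\hat g(x,u_2)-g(x,u_2)]$ and uses $\hat g(x,u_2)-g(x,u_2)\le 0$ together with $\phi\ge0$ to absorb the error with the correct sign; likewise $\alpha(\tilde u_0;\mu)\ge\alpha(\widehat{\tilde u_0};\hat\mu)$ on the right-hand side boundary term follows from $u_0\le\hat u_0$, $\Omega_0\subset\hat\Omega_0$ and $\mu\le\hat\mu$, again yielding the right inequality direction. The boundedness $\ell\in L^\infty(H_T)$ uses that $\hat g$ is locally Lipschitz in $u$ uniformly in $x$ (assumption \eqref{assumeg}(ii)) and $u_1,u_2\in L^\infty(H_T)$; the bound $0\le e\le 1$ is unchanged.

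Once \eqref{difference} is in hand, Steps~1--3 of the proof of Theorem~\ref{unique} apply word for word: we smooth $e,\ell$ by $e_m,\ell_m$ satisfying \eqref{appfns}--\eqref{appcond}, solve the nondegenerate backward problem \eqref{backward}, form the cutoff test functions $\psi_m^\epsilon$, obtain \eqref{mainesti}, estimate $I_m,J_m,K_m$ via \eqref{estiim}, \eqref{estijm}, \eqref{estikm} using the same barrier functions $v$ and $v^*$, and finally let $m\to\infty$ then $R\to\infty$ to conclude $\int_0^T\!\int_{\R^N}[\alpha(u_2;\mu)-\alpha(u_1;\hat\mu)]f\,dxdt\le0$ for all nonnegative $f\in C^\infty(H_T)$ of compact spatial support. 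This gives $\alpha(u;\mu)\le\alpha(\hat u;\hat\mu)$ a.e., whence $u(t,x)>0$ forces $\hat u(t,x)>0$ and $u\le\hat u$ a.e. in $H_T$, exactly as in the last lines of the proof of Theorem~\ref{unique}. Because all of this is genuinely identical to the established argument, I would write the proof in the paper as a short remark: ``The proof is essentially the same as that of Theorem~\ref{unique}, once one notes that the inequality \eqref{difference} continues to hold under the present hypotheses with $\ell$ built from $\hat g$; we omit the repetition.''

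The only genuinely new verification — and thus the step I expect to need a sentence or two of care — is the derivation of \eqref{difference} itself in the asymmetric setting: one must be careful that every sign works out when $g$, $\mu$, $\Omega_0$ all differ, in particular that the defect term from $\hat g-g\ge0$ pairs with $\phi\ge0$ (not with a signed $\alpha_2-\alpha_1$) and so contributes with the favourable sign, and that the exterior boundary term $d(\mu_2^{-1}-\mu_1^{-1})\int_{\R^N\setminus\overline{\Omega_0}}\phi(0,x)\,dx$ becomes, with $\hat\Omega_0\supset\Omega_0$ and $\hat\mu\ge\mu$, a term of the form $d\big(\mu^{-1}\int_{\R^N\setminus\overline{\Omega_0}}-\hat\mu^{-1}\int_{\R^N\setminus\overline{\hat\Omega_0}}\big)\phi(0,x)\,dx\ge0$, which is exactly what is needed to keep \eqref{difference} with ``$\ge$''. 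Everything downstream of \eqref{difference} is then unchanged, so no further obstacle arises.
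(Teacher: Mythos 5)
Your proposal is correct, but it follows a different route from the paper. The paper does not re-derive \eqref{difference} in the asymmetric setting at all: it introduces the exact weak solutions $w$ and $\hat w$ for the data $(\Omega_0,u_0,g,\mu)$ and $(\hat\Omega_0,\hat u_0,\hat g,\mu)$ (note: $\hat w$ is taken with $\mu$, not $\hat\mu$), applies Theorem \ref{unique} twice with \emph{shared} data to get $u\le w$ and $\hat u\ge\hat w$ (the second application is where $\hat\mu\ge\mu$ enters), and then compares $w\le\hat w$ by going back to the approximating problems \eqref{exappx}: Lemma \ref{exappcom} gives $w_m\le\hat w_m$ (using $g\le\hat g$ and $\tilde u_0\le\widehat{\tilde u}_0$, and the fact that both sequences share the same $\alpha_m$, hence the same $\mu$), and the monotone limits $w_m\to w$, $\hat w_m\to\hat w$ from the existence proof plus uniqueness yield $w\le\hat w$. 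Your approach instead re-runs the duality argument of Theorem \ref{unique} directly with $u_1=\hat u$, $u_2=u$, $\ell$ built from $\hat g$, and the asymmetric initial term; this is legitimate, since everything downstream of \eqref{difference} only uses $0\le e\le1$, $\ell\in L^\infty$ and nonnegativity of the right-hand side, and your check of that nonnegativity is the genuinely new point. What each buys: yours is self-contained (no appeal to the existence construction or to the limit identification $w_m\to w$, $\hat w_m\to\hat w$) at the cost of re-verifying \eqref{difference} with all data different; the paper's avoids any re-derivation but must arrange the intermediate solution $\hat w$ with the smaller $\mu$ so that the two approximation schemes are comparable. Two small blemishes in your write-up, neither fatal: the sentence ``uses $\hat g(x,u_2)-g(x,u_2)\le 0$'' has the sign reversed (it should be $\ge 0$, as you yourself state correctly in the last paragraph — the point is that replacing $g(x,u_2)$ by the larger $\hat g(x,u_2)$ preserves the subsolution inequality against $\phi\ge0$); and the displayed exterior term is not the whole initial contribution — it also contains the nonnegative pieces $\int_{\Omega_0}(\hat u_0-u_0)\phi(0,\cdot)\,dx$ and $\int_{\hat\Omega_0\setminus\overline\Omega_0}(\hat u_0+d\mu^{-1})\phi(0,\cdot)\,dx$ — but since only the sign matters, the conclusion stands.
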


\begin{proof}
 Let $w$ and $\hat{w}$ be the weak solutions of \eqref{eqmain} corresponding to $(\Omega_0, u_0, g,\mu)$ and $({\hat \Omega}_0, {\hat u}_0, {\hat g},\mu)$ over $H_T$, respectively. As a direct application of Theorem \ref{unique}, we have
 \begin{equation}\label{3.14}
 u\leq w, \quad  \hat{u}\geq \hat{w} \,\,\, a.e. \hbox{ in } H_T.
 \end{equation}

On the other hand, let $w_m$ be the solution of problem \eqref{exappx}, and 
${\hat w}_m$ be the solution determined by \eqref{exappx}  with
reaction term ${\hat g}$, and the initial
function is obtained by extending ${\hat u}_0$ to $\R^N$. By the comparison result Lemma \ref{exappcom}, we
clearly have $w_m \leq {\hat w}_m$ in $ H_T$. Furthermore, from the proof of Theorem
\ref{existence} and the uniqueness of the weak solution, we know that
$$w_m \to w, \quad  \hat{w}_m \to \hat{w} \,\,\, a.e. \hbox{ in }  H_T \hbox{ as } m\to\infty,$$
and hence $w\leq \hat{w}$ a.e. in $H_T$. This together with \eqref{3.14} completes the proof.
\end{proof}

The next result is useful for constructing supersolutions and subsolutions to \eqref{eqmain}.

\begin{thm}\label{super-sub}
Assume that
 there exists a $C^1$ function $\Phi$ over ${H_T}$
 such that, with
  $$\Omega (t): = \Big\{x \in \R^N: \; \Phi
 (t,x)<0\Big\},
 $$ 
 one has
 \[
 \mbox{
 $\Omega (0)=\Omega_0$ and } \; |\nabla_x \Phi| \neq 0 \;\;\; \mbox{on\; $ \partial
\Omega (t)$}.
\]
Suppose that  $w(t,x)$ and $\nabla_x w(t,x)$ are continuous in $\bigcup_{0 \leq
t\leq T} {\overline {\Omega (t)}}$, $\nabla_x^2 w(t,x)$, $w_t(t,x)$ are
continuous in $\bigcup_{0<t\leq T} \Omega (t)$. Then $w(t,x)$ (extended by 0 for $x\not\in\Omega(t)$) is a weak supersolution of \eqref{eqmain} for $t\in (0, T]$, provided that
\begin{equation}\label{sup}\left\{
\begin{array}{ll}
w_t-d\Delta w\geq g(x, w),\; w>0 & \mbox{ for } x\in\Omega(t),\; 0<t\leq T,\smallskip
\\
w=0,\; \Phi_t\leq \mu \nabla_x u\cdot\nabla_x\Phi &\mbox{ for } x\in \partial\Omega(t),\; 0<t\leq T,\smallskip
\\
w(0,x)\geq u_0(x) & \mbox{ for } x\in\Omega_0.
\end{array}\right.
\end{equation}
If all the inequalities in \eqref{sup} are reversed, then $w$ is a weak subsolution to \eqref{eqmain}.
\end{thm}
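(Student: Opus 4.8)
The plan is to verify directly that the extended function $w$ (set to $0$ outside $\Omega(t)$) satisfies the weak supersolution inequality \eqref{eqweak} with ``$=$'' replaced by ``$\geq$'' for every nonnegative test function $\phi\in C^\infty(H_T)$ with compact support and $\phi=0$ on $\{T\}\times\R^N$. The natural strategy is to split the space-time cylinder $H_T$ into the region $Q^+:=\bigcup_{0<t\leq T}\Omega(t)$, where $w$ is a classical supersolution of the PDE, and its complement $Q^-$, where $w\equiv 0$ and $\alpha(w)=-d\mu^{-1}$; then integrate by parts on each piece and collect the boundary contributions along the free boundary $\mathcal{S}:=\bigcup_{0<t\leq T}\partial\Omega(t)$, which is a $C^1$ hypersurface in $\R^{N+1}$ because $|\nabla_x\Phi|\neq 0$ there (so $\mathcal{S}$ is a graph over $t$ locally, and its unit normal in space-time is proportional to $(\Phi_t,\nabla_x\Phi)$).

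First I would write $\int_0^T\int_{\R^N}[d\nabla_x w\cdot\nabla_x\phi-\alpha(w)\phi_t]\,dxdt = \int_{Q^+}+\int_{Q^-}$. On $Q^-$, since $w=0$ and $\alpha(w)=-d\mu^{-1}$, that integral reduces to $d\mu^{-1}\int_{Q^-}\phi_t\,dxdt$, which by the divergence theorem on $Q^-$ equals $d\mu^{-1}$ times a surface integral over $\mathcal{S}$ (the $\{0\}$ and $\{T\}$ faces being handled by $\phi(T,\cdot)=0$ and by the term $-\int_{\R^N}\alpha(\tilde u_0)\phi(0,x)\,dx$, which supplies exactly the $d\mu^{-1}\int_{\R^N\setminus\overline{\Omega_0}}\phi(0,x)\,dx$ needed). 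On $Q^+$, integrating the term $d\nabla_x w\cdot\nabla_x\phi$ by parts in $x$ gives $-d\int_{Q^+}\Delta w\,\phi + d\int_{\mathcal S}\phi\,\partial_{\nu_x}w\,dS$ (no contribution on $\partial B_R$ since $\phi$ has compact support; none on $\{t=T\}$ or $\{t=0\}$ in the $x$-integration), and integrating $-\alpha(w)\phi_t=-w\phi_t$ by parts in $t$ on $Q^+$ produces $\int_{Q^+}w_t\phi\,dxdt$ plus a surface term on $\mathcal S$ and the initial face $\{t=0\}$, giving $-\int_{\Omega_0}u_0\,\phi(0,x)\,dx$ plus surface integrals. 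Combining, the bulk terms on $Q^+$ assemble into $\int_{Q^+}(w_t-d\Delta w-g(x,w))\phi\,dxdt\geq 0$ by the first line of \eqref{sup} (using $\phi\geq 0$), and the initial-face pieces combine with $-\int_{\R^N}\alpha(\tilde u_0)\phi(0,\cdot)$ to give $\int_{\Omega_0}(w(0,x)-u_0(x))\phi(0,x)\,dx\geq 0$ by the third line of \eqref{sup}.

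It remains to show the total surface contribution on $\mathcal S$ is $\geq 0$. Parametrizing $\mathcal S$ locally as a graph and using that $w=0$ on $\mathcal S$ forces $\nabla_x w$ there to be parallel to $\nu_x=-\nabla_x\Phi/|\nabla_x\Phi|$ (the inward-to-$\Omega(t)$ gradient of the vanishing function $w$), one computes that the $d\mu^{-1}$ surface term from $Q^-$, the $d\,\partial_{\nu_x}w$ term from the $x$-integration on $Q^+$, and the $t$-integration surface term from $Q^+$ combine — after inserting the relation $w=0$ on $\mathcal S$ to express the space-time tangential derivative of $w$ along $\mathcal S$ as zero — into an integrand proportional to $\big(\mu\nabla_x w\cdot\nabla_x\Phi-\Phi_t\big)$ times a nonnegative factor times $\phi$. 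The sign condition $\Phi_t\leq\mu\nabla_x u\cdot\nabla_x\Phi$ on $\partial\Omega(t)$ (second line of \eqref{sup}) then makes this surface integral nonnegative, completing the verification. The subsolution case is identical with all inequalities reversed.

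I expect the main obstacle to be the rigorous bookkeeping of the surface integral on $\mathcal S$: one must carefully use the $C^1$ graph structure of $\mathcal S$ together with the constraint $w|_{\mathcal S}=0$ (differentiated along $\mathcal S$) to reduce the three a priori different surface terms — coming from the $Q^-$ side, from spatial integration by parts on $Q^+$, and from temporal integration by parts on $Q^+$ — to a single expression whose sign is governed exactly by the free-boundary inequality in \eqref{sup}. Fortunately this is precisely the computation underlying the equivalence between classical and weak formulations in Theorem \ref{weakclassic} (the analogue of \cite[Theorem 2.3]{DG2}), so the argument parallels that one with inequalities in place of equalities throughout; I would state it that way and refer to \cite{DG2} for the detailed surface computation.
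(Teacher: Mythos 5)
Your proposal is correct and follows essentially the same route as the paper's own argument: the paper likewise applies the space--time divergence theorem to handle the $\phi_t$ (i.e.\ $\alpha(w)\phi_t$) contribution from the region where $w=0$, integrates the differential inequality $w_t-d\Delta w\geq g(x,w)$ against $\phi$ by parts over $\bigcup_{0<t<T}\Omega(t)$, and absorbs the resulting interface terms using $\Phi_t\leq \mu\nabla_x w\cdot\nabla_x\Phi$ together with $w=0$ on $\partial\Omega(t)$, exactly as you outline. The only cosmetic difference is that your remark about $\nabla_x w$ being parallel to the normal on the free boundary is not needed: the surface term is controlled directly by the assumed inequality, since it reduces to $\frac{d}{\mu|\nabla_x\Phi|}\bigl(\mu\nabla_x w\cdot\nabla_x\Phi-\Phi_t\bigr)\phi\geq 0$ (and the surface term from the time integration by parts on the $w>0$ side vanishes because $w=0$ there).
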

\begin{proof} We only prove the case for weak supersolution, as the proof for the weak subsolution case is similar. We need to show that
\begin{equation}\label{eqweaksuper} 
\left.\begin{array}{ll}
\displaystyle\medskip \int_0^T \int_{\R ^N} \Big[d \nabla_x w \cdot \nabla_x
\phi-\alpha (w) \phi_t\Big] dx dt-\int_{\R ^N} &\!\!\!\!\!\!\alpha (\tilde u_0(x)) \phi (0,x) dx\\
&\displaystyle \geq \int_0^T \int_{\R ^N} g(x,w) \phi dxdt
\end{array}\right.
\end{equation}
for every nonnegative function $\phi \in C^{\infty}(H_T)$ with compact support and satisfying
$\phi=0$ on $\{T\} \times \R^N$.

  For each test function $\phi$ as described above,  we use 
the divergence theorem to calculate the following integral 
$$\int _{0}^{T}\int_{\Omega(t)} {\rm div} \Psi \,dxdt \;\mbox{ with }
\Psi(t,x)=(\phi(t,x),0,\cdots,0)\in\R^{N+1},
$$ 
and obtain, with $S:=\bigcup_{0<t<T}\partial\Omega(t)$,
 \begin{equation*}
\left.\begin{array}{ll}
\displaystyle\medskip  \int _{0}^{T}\int_{\Omega(t)} \phi_t\; dxdt 
 \!\! &=\displaystyle\medskip -\int_{S} \phi\; \frac{\Phi_t}{\big|(\Phi_t, \nabla_x \Phi)\big|}d\sigma - \int_{{\Omega}(0)} \phi(0,x)\; dxdt  \\
 &= \displaystyle\medskip -\int_{0}^T\int_{\partial\Omega(t)} \phi\; \frac{\Phi_t}{\big|\nabla_x \Phi\big|}d S_xdt - \int_{{\Omega}(0)} \phi(0,x) dxdt.
 \end{array}\right.
\end{equation*}

Since the unit outward normal of $\Omega(t)$ at $x\in\partial\Omega(t)$ is given by 
$\nu=\nu(t,x):=\nabla_x\Phi(t,x)/|\nabla_x\Phi(t,x)|$, by \eqref{sup} we have
\begin{equation*}
\frac{\Phi_t}{|\nabla_x\Phi|}\leq \mu \nabla_x w\cdot \nu \mbox{ for } x\in\partial\Omega(t),
\end{equation*}
and hence
\begin{equation}\label{inteul0}
-\frac{d}{\mu}\int _{0}^{T}\int_{{\Omega}(t)} \phi_t\; dxdt  \leq   d\int_{0}^T\int_{\partial\Omega(t)} \phi\; \nabla_x w\cdot\nu d S_xdt +\frac{d}{\mu} \int_{\Omega(0)} \phi(0,x) dxdt. 
\end{equation}

On the other hand, we multiply both sides of the first inequality in \eqref{sup} by $\phi$ and integrate the resulting inequality over $\bigcup_{0<t<T} {\Omega}(t)$, making use of  integration by parts and the last inequality in \eqref{sup}, and obtain
 \begin{equation*}
\left.\begin{array}{ll}
\displaystyle\medskip  \int _{0}^{T}\int_{{\Omega}(t)} \big[\phi_t w-d\nabla_x \phi \nabla_x w\big]dxdt &+\,\,\displaystyle\medskip \int _{0}^{T}\int_{{\Omega}(t)} g(x, w) \phi dxdt\\
&\displaystyle\medskip\leq \,\, -d\int_{0}^T\int_{\partial\Omega(t)} \phi \nabla_xw\cdot\nu d S_xdt 
- \int_{{\Omega}(0)}u_0(x)\phi(0,x) dxdt.
 \end{array}\right.
\end{equation*} 
By this inequality, \eqref{inteul0}, the fact that $w(t,x)=0$ for $x\not\in\Omega(t)$, and the definition of $\alpha$,
we immediately obtain \eqref{eqweaksuper}. 
\end{proof}

We now consider the asymptotic behavior of the weak solution to \eqref{eqmain} as $\mu \to \infty$. To emphasize its dependence on $\mu$, we denote by $u_\mu$ the unique weak solution, and denote $\Omega_\mu (t)=\{x: u_\mu (t,x)>0\}$. 

\begin{thm} \label{asymu} 
Let $u_\mu$ be the unique weak solution to problem \eqref{eqmain}. Then
\begin{equation*} 
\lim_{\mu \to \infty} \Omega_\mu (t)=\R^N, \;\; \forall \,t>0,
\end{equation*}
and
\begin{equation*}
u_\mu \to U \;\; \mbox{in $C_{loc}^{\frac{1+\nu_0}{2},
1+\nu_0} ((0, \infty) \times \R^N)$ as $\mu \to \infty$},
\end{equation*}
where $\nu_0$ can be any number in $(0,1)$ and $U(t,x)$ is the
unique solution of the Cauchy problem
\begin{equation}\label{cauchyp} 
\left \{ \begin{array}{ll}
\displaystyle\medskip U_t-d \Delta U=g(x,U) \;\;&\mbox{in $(0, \infty) \times \R^N$},\\
U(0,x)={\tilde u}_0 (x) \;\;& \mbox{in $\R^N$}.
\end{array} \right.
\end{equation}
Moreover, $u_\mu (t,x) \leq U(t,x)$ for all $t>0$ and $x \in \Omega_\mu (t)$.
\end{thm}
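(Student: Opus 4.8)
The plan is to realize $U$ as the monotone limit of the weak solutions $u_\mu$ as $\mu\to\infty$, and to upgrade this convergence to $C^{1,2}_{loc}$ by interior parabolic estimates once the free boundary has been shown to disappear in the limit. First I would record monotonicity in $\mu$: if $\mu\le\hat\mu$, then taking $g=\hat g$, $\Omega_0=\hat\Omega_0$, $u_0=\hat u_0$ in Theorem~\ref{thmcomp} gives $u_\mu\le u_{\hat\mu}$ a.e. in $H_T$, and hence $\Omega_\mu(t)\subset\Omega_{\hat\mu}(t)$. Likewise, regarding $U$ (extended by $0$ nowhere, i.e. the genuine Cauchy solution) as a weak supersolution of \eqref{eqmain} for every $\mu$—this is immediate since $U>0$ everywhere for $t>0$ by the strong maximum principle, so the free-boundary conditions are vacuous and Theorem~\ref{weakclassic}(i) (with $\Omega(t)\equiv\R^N$) applies after a routine check that the definition of a weak solution tolerates the region being all of $\R^N$—Theorem~\ref{unique} yields $u_\mu\le U$ for all $\mu$. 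Thus $u_\mu$ increases as $\mu\uparrow\infty$ and is bounded above by $U$, so the pointwise limit $\bar U(t,x):=\lim_{\mu\to\infty}u_\mu(t,x)$ exists, with $0\le\bar U\le U$, and $\Omega_\mu(t)$ increases to some set $\Omega_\infty(t)\subset\R^N$.

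Next I would show $\Omega_\infty(t)=\R^N$ and $\bar U=U$. The idea is to produce, for any fixed ball $B_\rho(x_0)$ and any $t_0>0$, a classical subsolution of the free boundary problem on $[0,t_0]\times$(a large ball) that, for $\mu$ large enough, forces $B_\rho(x_0)\subset\Omega_\mu(t_0)$ with a definite lower bound on $u_\mu$ there; such a subsolution is built in the familiar way by shrinking the domain of a genuine Cauchy solution of $v_t-d\Delta v=g(x,v)$ started from a smooth bump below $\tilde u_0$, and verifying via Theorem~\ref{super-sub} that the free-boundary inequality $\Phi_t\ge\mu\nabla_x v\cdot\nabla_x\Phi$ holds once $\mu$ is large, because the prescribed boundary velocity swamps the slow expansion of the constructed domain. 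Passing $\mu\to\infty$ gives $\bar U\ge v(t_0,x_0)>0$ on $B_\rho(x_0)$; since $x_0,t_0$ are arbitrary, $\bar U>0$ on $(0,\infty)\times\R^N$ and $\Omega_\infty(t)=\R^N$. Then $\bar U$ is a nonnegative function on $(0,\infty)\times\R^N$ which is a weak (in fact distributional) solution of $U_t-d\Delta U=g(x,U)$ with initial data $\tilde u_0$—here one uses that on any compact set, for $\mu$ large the free boundary has left the set, so $\alpha(u_\mu)=u_\mu$ there and \eqref{eqweak} becomes the usual weak formulation of \eqref{cauchyp}—and by uniqueness for the Cauchy problem (standard under \eqref{assumeg}) $\bar U=U$.

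Finally, to obtain convergence in $C^{(1+\nu_0)/2,\,1+\nu_0}_{loc}((0,\infty)\times\R^N)$: fix a parabolic cylinder $Q$ compactly contained in $(0,\infty)\times\R^N$. For $\mu$ large, $Q\subset\bigcup_{0<t\le T}\Omega_\mu(t)$, so on $Q$ each $u_\mu$ is a classical solution of the single equation $u_t-d\Delta u=g(x,u)$ with a uniform $L^\infty$ bound (namely $0\le u_\mu\le\|\tilde u_0\|_\infty e^{KT}$ from \eqref{3.24}); interior $L^p$ and then Schauder-type estimates for linear parabolic equations give a uniform $C^{(1+\nu_0)/2,\,1+\nu_0}(Q')$ bound on a slightly smaller $Q'$, whence Arzel\`a–Ascoli plus the already-identified pointwise limit force $u_\mu\to U$ in that norm. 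The inequality $u_\mu\le U$ for all $t>0,\ x\in\Omega_\mu(t)$ is just the comparison established in the first paragraph.

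I expect the main obstacle to be the second step: making precise, \emph{with a $\mu$-independent conclusion}, that the internally constructed subsolution's free boundary is overtaken for all large $\mu$, and simultaneously that its positivity set eventually covers any prescribed compact set on any prescribed time interval. This requires choosing the auxiliary domain to expand at a rate that is slow but still reaches $B_\rho(x_0)$ by time $t_0$, and checking the sign condition $\Phi_t\ge\mu\,\nabla_x v\cdot\nabla_x\Phi$ on its lateral boundary—which is where the hypothesis $\mu\to\infty$ is genuinely used and where the estimates are least mechanical; everything else (monotonicity, identification of the limit equation, uniqueness for \eqref{cauchyp}, interior regularity) is standard once that comparison device is in place.
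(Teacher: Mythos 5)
Your outline reproduces the argument the paper actually intends: the paper's own ``proof'' is a one-line deferral to \cite[Theorem 5.4]{DG2} ``making use of Theorem \ref{thmcomp}'', i.e.\ precisely the comparison scheme you describe (monotonicity of $u_\mu$ in $\mu$ via Theorem \ref{thmcomp}; $u_\mu\le U$ because $U$ is a weak supersolution, the point being $\alpha(\tilde u_0)\le \tilde u_0$ and $\alpha(U)=U$ for $t>0$; a $\mu$-independent classical subsolution forcing positivity on any fixed compact set for all large $\mu$; identification of the monotone limit with the solution of \eqref{cauchyp}; interior $L^p$/Schauder estimates for the H\"older convergence, using the $\mu$-uniform bound $0\le u_\mu\le U\le\|\tilde u_0\|_\infty e^{Kt}$). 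So the strategy is right; two steps, however, need repair as written.

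First, the subsolution cannot be produced by ``shrinking the domain of a genuine Cauchy solution'': Theorem \ref{super-sub} (with reversed inequalities) requires the subsolution to vanish identically on its moving boundary, which the restriction of a Cauchy solution to a smaller moving domain does not do. The workable construction is to prescribe the moving domain in advance — e.g.\ concentric balls $B_{s(t)}(y_0)$ with $\overline{B_{s(0)}(y_0)}\subset\Omega_0$, $s$ smooth and nondecreasing, $s\equiv s(0)$ on a short initial interval, $s'\le C$, and $B_{s(t)}(y_0)\supset B_\rho(x_0)$ for $t\ge t_1$ — and let $w$ solve $w_t-d\Delta w=g(x,w)$ in this expanding region with zero lateral data and an initial bump below $u_0$. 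Since $g(x,0)=0$, $w>0$ inside for any $g$ satisfying \eqref{assumeg} (note the theorem is not restricted to Fisher--KPP, so no spreading mechanism may be invoked); the Stefan inequality $\Phi_t\ge\mu\nabla_x w\cdot\nabla_x\Phi$ is automatic while the domain is static, and on $[\epsilon,T]$ Hopf's lemma plus compactness give $|\nabla_x w|\ge\delta>0$ on the lateral boundary with $\delta$ and $C$ independent of $\mu$, so the inequality holds for all $\mu\ge C/\delta$ and Theorem \ref{thmcomp} yields $u_\mu\ge w$, hence the positivity you need, with a threshold on $\mu$ not depending on $\mu$ itself. Second, in identifying the limit you cannot argue that ``for $\mu$ large the free boundary has left the compact set'': test functions in \eqref{eqweak} do not vanish near $t=0$, and near $t=0$ the zero set of $u_\mu$ meets every compact set that intersects $\R^N\setminus\Omega_0$, for every $\mu$. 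The correct (and simpler) mechanism is $|\alpha(u_\mu)-u_\mu|\le d\mu^{-1}$ everywhere together with $\alpha(\tilde u_0)\to\tilde u_0$ in $L^1_{loc}$ as $\mu\to\infty$, combined with the $\mu$-uniform local $H^1$ bounds from the existence section and strong $L^2_{loc}$ convergence of the monotone family; then the limit satisfies the weak form of \eqref{cauchyp} and uniqueness for the Cauchy problem gives $\bar U=U$. With these two corrections your plan is a faithful reconstruction of the proof the paper delegates to \cite{DG2}.
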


\begin{proof}
Making use of Theorem \ref{thmcomp}, the proof is almost identical to that of  \cite[Theorem 5.4]{DG2}, and we omit the details.
\end{proof}

\begin{rem}
Although our main interest of this paper is for the case $\Omega_0\subset\R^N$ with $N\geq 2$, it is easily seen 
from their proofs that all the results in this section remain valid when $N=1$.
\end{rem}


\section{Spreading profile of the Fisher-KPP equation}\label{sec4}

In this section, we study the long-time behavior of the weak solution to problem \eqref{eqfrfisher}, for some special unbounded
 $\Omega_0$. More precisely, we assume that there exist $\phi\in (0, \pi)$ and $\xi_1>\xi_2$ such that \eqref{Omega0} holds, namely
\begin{equation}\label{outscone}
\Lambda^\phi+\xi_1e_N \subset {\Omega_0} \subset \Lambda^\phi+\xi_2e_N.
\end{equation}

Our first result on the long-time behavior of \eqref{eqfrfisher} is the following theorem.

\begin{thm}\label{thm6.1} 
Let $u(t,x)$ be the unique weak solution of problem \eqref{eqfrfisher} with $\Omega_0$ satisfying  \eqref{outscone}.
 Denote $\Omega(t)=\big\{x:\, u(t,x)>0\big\}$. Then
\begin{equation}\label{convtorn}
\lim_{t \to \infty} \Omega (t)=\R^N,
\end{equation}
and
\begin{equation}\label{convtoss}
 \lim_{t \to \infty} u(t,x)=\frac ab \;\;
\mbox{locally uniformly in $x \in \R^N$}.
\end{equation}
\end{thm}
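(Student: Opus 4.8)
The plan is to trap $u$ from below by a subsolution whose positivity set expands to fill $\R^N$ and whose value rises toward $a/b$, and to trap $u$ from above by the solution $U$ of the Fisher-KPP Cauchy problem, which already converges to $a/b$ locally uniformly. For the upper bound, one applies Theorem \ref{asymu} (with $g(x,u)=au-bu^2$), or more directly the comparison principle Theorem \ref{thmcomp} against the spatially homogeneous ODE solution $\overline v(t)$ of $\overline v'=a\overline v-b\overline v^2$ with $\overline v(0)=\|u_0\|_{L^\infty}$; since $\overline v(t)\to a/b$ and $\overline v$ is a (weak) supersolution on all of $\R^N$, we get $\limsup_{t\to\infty}\sup_{\Omega(t)}u(t,x)\le a/b$. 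The real content is therefore the lower bound, i.e., proving \eqref{convtorn} together with $\liminf u\ge a/b$.

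For the lower bound I would exploit the inclusion $\Lambda^\phi+\xi_1 e_N\subset\Omega_0$ together with the hypothesis $\liminf_{d(x,\partial\Omega_0)\to\infty}u_0(x)>0$, and construct an explicit classical subsolution to \eqref{eqfrfisher} on a growing domain. The natural candidate is a radial "expanding-ball" subsolution: inside a ball $B_{\rho(t)}(z)$ one seeks $(\underline u,\rho)$ solving (or sub-solving) the radially symmetric one-phase Stefan problem with $\underline u_t-d\Delta\underline u\le a\underline u-b\underline u^2$, $\underline u=0$ and $\rho'(t)\le\mu|\nabla\underline u|$ on the sphere $|x-z|=\rho(t)$. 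By the known one-dimensional/radial spreading results for the Fisher-KPP Stefan problem (as recalled in Theorem A and the spreading-speed discussion via \eqref{speedchy}), once the initial ball is large enough that spreading (rather than vanishing) occurs, one has $\rho(t)\to\infty$, $\rho(t)/t\to c_*$, and $\underline u\to a/b$ locally uniformly inside. Since $\Omega_0$ contains arbitrarily large balls (because it contains the cone $\Lambda^\phi+\xi_1 e_N$) and $u_0$ is bounded below by a positive constant on compact subsets far from $\partial\Omega_0$, for every $R>0$ and every center $z\in\R^N$ I can place such a subsolution ball $B_{R'}(z')\subset\Omega_0$ with $R'$ large, apply Theorem \ref{thmcomp} to get $u(t,x)\ge\underline u(t,x)$, and let the subsolution spread to cover $B_R(z)$ and push $\underline u$ up to near $a/b$ there. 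Combined with the upper bound this yields \eqref{convtoss} and hence also \eqref{convtorn}.

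The main obstacle is supplying the radially symmetric expanding-ball subsolution with the required spreading behavior on a domain which is the \emph{interior} of a ball (not the exterior treated in the Appendix), and checking carefully that one can choose the initial radius $R'$ large enough, uniformly as the center $z'$ ranges over the region needed, so that the "spreading" alternative holds with $\rho(t)\to\infty$ and $\underline u\to a/b$; this is where one invokes the classical theory of the radial free boundary problem together with a lower bound on $u_0$ coming from $\liminf_{d(x,\partial\Omega_0)\to\infty}u_0>0$. A secondary technical point is the matching at $t=0$: one needs $R'$ and the center chosen so that $B_{R'}(z')\Subset\Omega_0$ with $\operatorname{dist}(B_{R'}(z'),\partial\Omega_0)$ large, which is possible precisely because $\Omega_0\supset\Lambda^\phi+\xi_1 e_N$ contains balls of every radius sitting arbitrarily deep inside $\Omega_0$. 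Everything else — the comparison invocations, the ODE upper barrier, passing from "covers every $B_R(z)$" to "$\Omega(t)\to\R^N$" — is routine.
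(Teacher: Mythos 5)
Your proposal is correct and follows essentially the paper's own route: the paper likewise places a single large ball $B_{r_0}(x_0)\subset\Omega_0$ with $r_0\ge\sqrt{d\lambda_1/a}$ (so the spreading alternative holds), uses the expanding radially symmetric free boundary solution of \cite{DG1} — whose front $k(t)\to\infty$ and which tends to $a/b$ locally uniformly by \cite[Theorem 6.2]{DG2} — as a lower comparison function via Theorems \ref{weakclassic}, \ref{unique} and \ref{thmcomp}, and bounds $u$ from above by the Cauchy solution $U$ of \eqref{cauchyp} through Theorem \ref{asymu}. The only remark: Theorem \ref{thm6.1} does not assume $\liminf_{d(x,\partial\Omega_0)\to\infty}u_0(x)>0$, and your argument does not actually need it, since continuity and positivity of $u_0$ on one closed ball placed deep inside the cone already give the required positive lower bound for the initial data of the radial subsolution.
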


\begin{proof}
We first prove  \eqref{convtorn}. Since $\Omega_0$ satisfies \eqref{outscone}, we can find a ball $B_{r_0}(x_0)\subset \Omega_0$ with radius 
\begin{equation}\label{radius}
r_0\geq R^*:= \sqrt{\frac{d}{a}\lambda_1},
\end{equation}
 where $\lambda_1$ is the first eigenvalue of the following eigenvalue problem
 $$-\Delta \phi=\lambda \phi \hbox{ in } B_{1}(0);\quad \phi=0 \hbox{ on } \partial B_1(0).$$
 We then choose a $C^2$ radial function $v_0(r)$ ($r=|x-x_0|$) satisfying 
$$v_0(|x-x_0|)\leq u_0(x) \hbox{ for } |x-x_0|<r_0, \,\,\, v_0(r)>0 \hbox{ for } r\in [0,r_0)\,\hbox{ and }\, v_0'(0)=v_0(r_0)=0,$$
and consider the following radially symmetric problem
\begin{equation*}
\left \{ \begin{array}{ll} 
\displaystyle\medskip v_t-d \Delta v=av-b v^2, \;\;&t>0, \; 0<r<k(t), \\
\displaystyle\medskip v_r (t,0)=0, \;\; v(t,k(t))=0, \;\; &t>0,\\
\displaystyle\medskip k'(t)=-\mu v_r (t, k(t)), \;\; &t>0,\\
\displaystyle  k(0)=r_0, \;\;\; v(0,r)=v_0 (r), \;\; & 0 \leq r \leq r_0.
\end{array} \right.
\end{equation*}
It follows from \cite[Theorem 2.1]{DG1} that this problem admits a (unique)
classical solution $(v(t,r), k(t))$ defined for all $t>0$ such that $k'(t)>0$, $ v(t,r)>0$ for $0 \leq r<k(t)$, $t>0$.
 Moreover, by \cite[Theorem 2.5]{DG1}, we have 
 \begin{equation}\label{setbelow} 
 \lim_{t\to\infty}k(t)=\infty.
 \end{equation} 
 
Denote 
$$\mathcal{G} (t)=\Big\{x \in \R^N: \; |x-x_0|<k(t)\Big\},$$
and
$$ V(t,x)=v(t,|x-x_0|).  $$
We also extend $V(t,\cdot)$ to be zero outside $\mathcal{G} (t)$. Clearly, for any given $T>0$, $(V,\mathcal G)$ 
 is a classical solution of the free boundary problem
 \begin{equation}\label{eq6.2} 
\left \{ \begin{array} {ll} \displaystyle\medskip  u_t-d \Delta u=au-bu^2 \;\; &
\mbox{for $x \in \mathcal{G} (t), \; 0<t\leq T$},\\
\displaystyle\medskip u=0,\; u_t=\mu|\nabla_x u|^2 \;\;&\mbox{for $x \in \partial\mathcal{G}(t), \; 0<t\leq T$}, \\
u(0,x)=v_0 (|x-x_0|) \;\; &\hbox{for } x \in \mathcal{G} (0). 
\end{array} \right.
\end{equation}
By Theorems \ref{weakclassic} and \ref{unique}, $V$ is the unique weak solution of $\eqref{eq6.2}$ over $H_T$. It then follows from Theorem \ref{thmcomp} that $u \geq V $ in  $H_T$, and hence by the arbitrariness of $T$, we have
$ \mathcal{G} (t)\subset \Omega (t)$ for all $t\geq 0$. This together with \eqref{setbelow} implies \eqref{convtorn}.
  
It remains to prove  \eqref{convtoss}. 
We claim that 
\begin{equation}\label{auxicom} 
{V}(t,x)\leq u(t,x)\leq {U}(t,x) \hbox{  in }   [0,\infty)\times \R^N,
\end{equation}
where $U(t,x)$ is the unique solution to the Cauchy  problem \eqref{cauchyp}.
Indeed, the first inequality in \eqref{auxicom} follows directly from Theorem \ref{thmcomp}, while the second inequality is a consequence of Theorem \ref{asymu}.

Applying \cite[Theorem 6.2]{DG2} to the equation of ${V}$, we obtain 
 \begin{equation}\label{stablefb}
 \lim_{t \to \infty} {V}(t,x)=\frac ab \;\;\mbox{locally uniformly in $x \in \R^N$}.
\end{equation}
On the other hand, it is well known  that  
\begin{equation}\label{stablefa}
 \lim_{t \to \infty} {U}(t,x)=\frac ab \;\;
\mbox{locally uniformly in $x \in \R^N$}.
\end{equation}  
By \eqref{auxicom}, \eqref{stablefb} and \eqref{stablefa}, we immediately obtain \eqref{convtoss}, and the proof of Theorem \ref{thm6.1} is thus complete.
\end{proof}

Next we examine  the long-time profile of the free boundary $\partial \Omega(t)$. 
For convenience, we first recall the following result from \cite{BDK}. 
\begin{prop}\label{semiwave}
Let $d>0$, $a>0$ and $b>0$ be given constants. For any $k\in [0, 2\sqrt{ad})$, the problem 
\begin{equation}\label{eqwave}
-dZ''+kZ'=aZ-bZ^2 \hbox{ in } (0,\infty),\quad Z(0)=0
\end{equation}
admits a unique positive solution $Z=Z_k$, and it satisfies $Z_k(r)\to a/b$ as $r\to\infty$. Moreover, $Z'_k(r)>0$ for $r\geq 0$, $Z_{k_1}'(0)>Z_{k_2}'(0)$, $Z_{k_1}(r)>Z_{k_2}(r)$ for $r>0$ and $k_1<k_2$, and for each $\mu>0$, there is a unique $c_*=c_*(\mu,a,b,d)$ such that $\mu Z'_{k}(0)=k$ when $k=c_*$. Furthermore, $c_*(\mu,a,b,d)$ depends continuously on its arguments, is increasing in $\mu$ and $\lim_{\mu\to\infty}c_*(\mu,a,b,d)=2\sqrt{ad}$.
\end{prop}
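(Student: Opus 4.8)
The plan is to reduce everything to a phase-plane analysis of the scalar ODE. Setting $P=Z'$, equation \eqref{eqwave} becomes the planar system $Z'=P$, $dP'=kP-(aZ-bZ^2)$, with equilibria $(0,0)$ and $(a/b,0)$. Linearizing at $(0,0)$ gives eigenvalues $\lambda=\frac{1}{2d}\bigl(k\pm\sqrt{k^2-4ad}\bigr)$, so $(0,0)$ is an unstable focus when $0<k<2\sqrt{ad}$ (a center when $k=0$); this is exactly where the hypothesis $k<2\sqrt{ad}$ enters. At $(a/b,0)$ the eigenvalues are $\lambda_{\pm}(k)=\frac{1}{2d}\bigl(k\pm\sqrt{k^2+4ad}\bigr)$, of opposite sign, so $(a/b,0)$ is a saddle for every $k\ge 0$, and a short computation shows $|\lambda_-(k)|$ is strictly decreasing in $k$.

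For existence and uniqueness of $Z_k$, I would run a shooting argument in the initial slope $s=Z'(0)>0$. When $s$ is large, the bound $P'\ge -a^2/(4bd)$ (valid while $P\ge 0$) forces $Z$ to cross the level $a/b$ while $P>0$, after which $aZ-bZ^2<0$ pushes the orbit off to infinity; when $s$ is small, continuous dependence on $s$ (the orbit tends to the equilibrium $Z\equiv 0$ as $s\to 0^+$) together with the clockwise rotation of orbits near $(0,0)$ forces the orbit to reach a maximum $Z_{\max}<a/b$ with $P=0$ and then to decrease monotonically back through $Z=0$ with $P<0$. Both sets of shooting slopes are open, so there is a slope $s^*=s(k)>0$ whose orbit remains in $\{0<Z<a/b,\ P>0\}$; there $Z$ is increasing and bounded, hence converges, necessarily to the equilibrium $(a/b,0)$, while $P>0$ throughout. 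This yields a positive solution with $Z_k(r)\to a/b$ and $Z_k'>0$ on $[0,\infty)$. Uniqueness is then immediate: any such orbit must lie on the branch of the stable manifold of the saddle $(a/b,0)$ entering $\{Z<a/b,\ P>0\}$, and that branch is unique, which pins down $s(k)$.

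For the monotonicity in $k$, I would pass to an ``energy'' variable. Since $Z_k$ is strictly increasing, $P=Z_k'$ may be viewed as a function of $Z\in(0,a/b)$, and $Q:=\frac{d}{2}P^2$ satisfies $Q'=k\sqrt{2Q/d}-(aZ-bZ^2)$ with $Q(Z)\to 0$ as $Z\to (a/b)^-$. Given $k_1<k_2$, near $Z=a/b$ one has $P\sim|\lambda_-(k)|(a/b-Z)$, so $Q_1>Q_2$ there. A sliding argument extends this to all of $(0,a/b)$: at the rightmost point $\bar Z$ of $\{Z:P_1(Z)\le P_2(Z)\}$ one necessarily has $P_1(\bar Z)=P_2(\bar Z)=:p>0$, yet $(Q_1-Q_2)'(\bar Z)=(k_1-k_2)p<0$, contradicting that $Q_1-Q_2$ rises from $0$ immediately to the right of $\bar Z$; hence that set is empty and $Q_1>Q_2$, i.e. $P_1>P_2$, on $(0,a/b)$. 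The same derivative identity at $Z=0$ (where $(Q_1-Q_2)'(0^+)=(k_1-k_2)s(k_1)<0$ would follow from $Q_1(0)=Q_2(0)$) forces $Q_1(0)>Q_2(0)$, i.e. $Z_{k_1}'(0)=s(k_1)>s(k_2)=Z_{k_2}'(0)$; a no-crossing comparison for the relation $Z'=P(Z)$ then gives $Z_{k_1}>Z_{k_2}$ on $(0,\infty)$.

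Finally, for $c_*$ set $h(k):=\mu Z_k'(0)-k=\mu s(k)-k$ on $[0,2\sqrt{ad})$. Then $h(0)=\mu s(0)>0$, while $s(k)\to 0$ as $k\uparrow 2\sqrt{ad}$ — otherwise the decreasing positive limit $s^*$ would persist in a limiting orbit at $k=2\sqrt{ad}$, contradicting the fact that at that speed the relevant branch of the saddle's stable manifold connects to $(0,0)$ (the monotone front), i.e. reaches $Z=0$ with $P=0$. Hence $h(k)\to -2\sqrt{ad}<0$; as $h$ is continuous (orbits, and hence $s(k)$, depend continuously on $k$) and strictly decreasing in $k$ by the previous step, it has a unique zero $c_*(\mu)\in(0,2\sqrt{ad})$. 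Continuity of $c_*$ in $(\mu,a,b,d)$ follows from continuous dependence of $s(k;a,b,d)$ on all parameters and strict monotonicity of $h$; for $\mu_1<\mu_2$, $h_{\mu_2}(c_*(\mu_1))=\mu_2 s(c_*(\mu_1))-c_*(\mu_1)>0$ forces $c_*(\mu_1)<c_*(\mu_2)$, so $c_*$ is increasing in $\mu$; being bounded by $2\sqrt{ad}$ it has a limit $c_\infty$, which cannot be $<2\sqrt{ad}$, since then $s(c_*(\mu))=c_*(\mu)/\mu\to 0$ while $s(c_*(\mu))\ge s(c_\infty)>0$, whence $c_\infty=2\sqrt{ad}$. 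The step I expect to be the main obstacle is the precise description of the convergent orbit near $(0,0)$: namely, that for $k<2\sqrt{ad}$ the relevant branch of the stable manifold of $(a/b,0)$ meets $\{Z=0\}$ transversally with $Z'>0$ (so that $s(k)>0$ and the free-boundary condition $\mu Z'(0)=k$ can be met), whereas at $k=2\sqrt{ad}$ it degenerates onto the classical monotone Fisher--KPP front. This focus-to-node transition at $(0,0)$ — exactly the minimal-speed threshold $2\sqrt{ad}$ — is what underlies both the shooting dichotomy in the existence proof and the limit $s(k)\to 0$ as $k\uparrow 2\sqrt{ad}$.
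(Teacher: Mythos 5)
The paper does not actually prove Proposition \ref{semiwave}: it is recalled from \cite{BDK} (see also \cite{DL} for the one-dimensional setting), where the proof is precisely the kind of phase-plane/shooting analysis you carry out. So your argument is not a different route so much as a reconstruction of the standard one, and in outline it is correct: the shooting dichotomy (large slopes cross $Z=a/b$ with $P>0$, small slopes turn around below $a/b$ because near the unstable focus the orbit completes its clockwise turn while still at distance $O(s)$ from the origin), the identification of the good orbit with the unique branch of the stable manifold of the saddle $(a/b,0)$ in $\{Z<a/b,\ P>0\}$, the comparison of $Q=\tfrac d2 P^2$ as a function of $Z$ via $Q'=k\sqrt{2Q/d}-(aZ-bZ^2)$ together with the slope asymptotics $P\sim|\lambda_-(k)|(a/b-Z)$, and the monotone intermediate-value argument for $h(k)=\mu Z_k'(0)-k$ all check out.

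Two places deserve more than the passing treatment you give them. First, uniqueness is claimed among \emph{all} positive solutions on $(0,\infty)$, so you must dispose of the ``too big'' orbits not merely by saying they go to infinity but by noting that once $Z>a/b$ with $P>0$ one has $dZ''\geq bZ^2-aZ$, which forces blow-up in finite time, so such orbits are not global solutions; and you should observe that the stable-manifold branch can meet $\{Z=0,\ P>0\}$ at most once (a second crossing would require $P<0$ at some $Z\in(0,a/b)$ in between, after which $P$ stays negative), so the exceptional shooting slope is genuinely unique. Second, the step you yourself flag --- that at $k=2\sqrt{ad}$ the relevant branch of the stable manifold connects backward to $(0,0)$ rather than crossing $\{Z=0,\ P>0\}$, which is what drives $Z_k'(0)\to0$ as $k\uparrow 2\sqrt{ad}$ --- is exactly the classical minimal-speed KPP front fact and cannot be left as an assertion. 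It is, however, easy to supply in your framework: for $k\geq 2\sqrt{ad}$ choose $\gamma>0$ with $d\gamma^2-k\gamma+a=0$ (possible precisely because $k\geq2\sqrt{ad}$) and check that the triangle $\{0\leq Z\leq a/b,\ 0\leq P\leq\gamma Z\}$ is invariant for the backward flow and contains the stable branch near the saddle; backward monotonicity of $Z$ then forces convergence to $(0,0)$, so no semi-wave exists at or above $2\sqrt{ad}$, and a compactness/limiting argument (as you indicate) gives $Z_k'(0)\to0$. With these two points made explicit, your proof is a complete and faithful version of the argument behind \cite{BDK}.
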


In our discussion below, since $a$, $b$, $d$ and $\mu$ are always fixed, we use  $c_*$ to denote $c_*(\mu,a,b,d)$. 

\begin{thm}\label{spreadspeed}
Let $u(t,x)$ be the unique weak solution of problem \eqref{eqfrfisher} with $\Omega_0$ satisfying \eqref{outscone} 
for some $\phi\in (\pi/2, \pi)$, and $u_0$ satisfying \eqref{assumeu0} and
\begin{equation}\label{addau0}
\liminf_{d(x,\partial \Omega_0)\to\infty} u_0(x)>0.
\end{equation}
Denote $\Omega(t)=\big\{x:\, u(t,x)>0\big\}$. Then for any $\epsilon>0$, there exists $T=T(\epsilon)>0$ such that for all $t>T$
we have
\begin{equation}\label{Omega(t)}
\Lambda^\phi-\left(\frac{c_*}{\sin\phi}-\epsilon\right)t\,{e}_N\subset \Omega(t)\subset \Lambda^\phi-\left(\frac{c_*}{\sin\phi}+\epsilon\right)t\,{e}_N.
\end{equation}
 Moreover, 
\begin{equation}\label{outconebehu}
\lim_{t\to\infty}\left[{\sup}_{x\in   \Lambda^\phi-\big(\frac{c_*}{\sin\phi}-\epsilon\big)t\,{e}_N} \Big|u(t,x)-\frac{a}{b} \Big|\right]=0.
\end{equation}
\end{thm}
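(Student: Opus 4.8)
The plan is to prove \eqref{Omega(t)} by constructing suitable sub- and supersolutions to \eqref{eqfrfisher} and invoking the comparison principle (Theorem \ref{thmcomp}), with the key geometric observation being that the front moves in the direction $-e_N$ at a speed obtained by ``tilting'' the one-dimensional semi-wave speed $c_*$ through the angle of the cone. Since the level sets of the semi-wave profile $Z_{c_*}$ from Proposition \ref{semiwave} travel at speed $c_*$ in any fixed direction, a profile of the form $Z_{c_*}\big(\mathrm{dist}(x,\,\Lambda^\phi-\beta t\,e_N)\big)$, set to zero outside $N[\Lambda^\phi,\beta t]$, should be a near-solution when $\beta\sin\phi\approx c_*$; the factor $1/\sin\phi$ in the normal-velocity/vertical-velocity conversion is exactly the content of the identity $N[\Lambda^\phi,R]=\Lambda^\phi-(R/\sin\phi)e_N$ recorded before \eqref{Omega(t)-2}. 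Because $\phi\in(\pi/2,\pi)$, the set $N[\Lambda^\phi,R]$ really is a rigid downward shift of the cone, so no spherical caps intrude and the geometry stays clean.

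First I would establish the \emph{outer} (upper) bound in \eqref{Omega(t)} — i.e. $\Omega(t)\subset\Lambda^\phi-(\tfrac{c_*}{\sin\phi}+\epsilon)t\,e_N$ for large $t$. Here I would build a classical supersolution on the moving domain $\widehat\Omega(t):=\Lambda^\phi-\beta t\,e_N$ with $\beta$ slightly larger than $c_*/\sin\phi$, using a profile that near the flat part of $\partial\widehat\Omega(t)$ looks like the semi-wave $Z_{k}$ with $k=\beta\sin\phi>c_*$ composed with the signed distance to the cone boundary, and is close to $a/b$ in the interior; near the vertex one perturbs slightly (e.g. rounds the corner or uses a slightly larger opening angle $\phi'<\phi$ and larger vertex shift) so that the free-boundary velocity inequality $\Phi_t\le\mu\nabla_x u\cdot\nabla_x\Phi$ in \eqref{sup} holds there too. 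One checks the interior differential inequality $w_t-d\Delta w\ge aw-bw^2$ using that $Z_k$ solves \eqref{eqwave} with the extra curvature term from $\Delta(\mathrm{dist})$ having the favorable sign (the cone is convex when $\phi>\pi/2$... — actually one must be careful: $\Lambda^\phi$ with $\phi>\pi/2$ is non-convex, so the distance function's Laplacian is negative near the boundary, which is the \emph{helpful} sign for a supersolution since $Z_k''>0$ is damped — this is the point to verify carefully). For the initial condition one uses $u_0\in L^\infty$ and $\beta\cdot 0=0$ so $\widehat\Omega(0)\supset\Omega_0$ after adjusting the vertex shift. Then Theorem \ref{thmcomp} gives $u\le w$, hence $\Omega(t)\subset\widehat\Omega(t)$, and letting $\beta\downarrow c_*/\sin\phi$ yields the bound.

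Next I would establish the \emph{inner} (lower) bound $\Lambda^\phi-(\tfrac{c_*}{\sin\phi}-\epsilon)t\,e_N\subset\Omega(t)$, which is where hypothesis \eqref{addau0} enters: since $u_0$ is bounded below by a positive constant far from $\partial\Omega_0$, for large $T_0$ the solution $u(T_0,\cdot)$ dominates a positive constant on a shifted cone $\Lambda^{\phi}+\eta e_N$ with $\eta$ as large as we like (using Theorem \ref{thm6.1}, which gives $\Omega(t)\to\R^N$ and $u\to a/b$ locally uniformly, so after waiting $u$ is close to $a/b$ on any fixed compact piece of the cone, and then a parabolic-propagation argument spreads this). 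Then I would construct a classical subsolution on $\underline\Omega(t)=\Lambda^{\phi'}-\beta' t\,e_N$ with $\phi'$ slightly smaller than $\phi$ and $\beta'$ slightly smaller than $c_*/\sin\phi$, again using a semi-wave-type profile $Z_{k'}\circ(\text{signed distance})$ with $k'=\beta'\sin\phi'<c_*$, now arranged so the reversed inequalities in \eqref{sup} hold — the curvature term from $\Delta(\mathrm{dist})$ must now be controlled so it doesn't spoil the subsolution inequality, which is why one shrinks $\phi'$ and stays a bounded distance from the vertex (or truncates the subsolution support to a compact-in-$x$ slab moving with the front, handling the far-field by comparison with the already-known interior convergence to $a/b$). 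Comparison then gives $u\ge\underline w$, hence $\underline\Omega(t)\subset\Omega(t)$, and optimizing $\phi'\uparrow\phi$, $\beta'\uparrow c_*/\sin\phi$ completes \eqref{Omega(t)}. Finally, \eqref{outconebehu} follows by squeezing: on $\Lambda^\phi-(\tfrac{c_*}{\sin\phi}-\epsilon)t\,e_N$ the subsolution $\underline w$ (for a slightly smaller $\epsilon'$) is already $\ge a/b-o(1)$ by the properties of $Z_{k'}$ (which tends to $a/b$ away from the front), while $u\le U\to a/b$ locally uniformly by Theorem \ref{asymu} and \eqref{stablefa}; a covering/translation argument upgrades the local convergence to the stated uniform-over-the-cone statement.

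The main obstacle, and the place requiring the most care, will be the treatment near the \emph{vertex} of the cone in both the sub- and supersolution constructions: the distance function to $\partial\Lambda^\phi$ is not smooth there, the free-boundary velocity condition \eqref{sup} degenerates (the normal is not well-defined), and the opening-angle/vertex-shift perturbations must be chosen so that the perturbed domain still contains (resp. is contained in) the cone for all relevant $t$ while keeping the $C^1$ regularity demanded by Theorem \ref{super-sub}. I expect this to require the auxiliary radially symmetric exterior-of-a-ball free boundary problem mentioned in the Appendix, used as a building block to ``cap off'' the cone near its vertex with a controlled spherical piece whose own free boundary provably lags behind (for the supersolution) or leads (for the subsolution) the conical part — this is presumably why the excerpt flags that the Appendix results ``are used in the proof of Theorem \ref{spreadspeed}.''
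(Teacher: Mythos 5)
Your overall strategy for the upper bound (a traveling semi-wave profile over a moving cone-shaped domain, compared via Theorems \ref{thmcomp} and \ref{super-sub}) is the same as the paper's, but as written it has a genuine gap at the very first comparison. The supersolution $Z_k(\mathrm{dist})$ is bounded above by roughly $a/b$ and vanishes on its own free boundary, while $u_0$ may have $\|u_0\|_{L^\infty}$ much larger than $a/b$ and $u$ is positive right up to $\partial\Omega(t)$; so ``$\widehat\Omega(0)\supset\Omega_0$ after adjusting the vertex shift'' does not give the pointwise ordering that Theorem \ref{thmcomp} requires, and the comparison cannot be launched at $t=0$. The paper's mechanism is: wait until a time $t_2$ at which $u(t_2,\cdot)\le\frac{a+\delta}{b-\delta}$ everywhere (ODE comparison through Theorem \ref{asymu}, see \eqref{upu0}); prove that $\Omega(t_2)$ is still contained in a slightly enlarged cone $\Lambda_{z_2}$ (\eqref{conez3}) --- and this, not ``capping the vertex,'' is exactly where the Appendix enters: Proposition \ref{pexsym} supplies exterior-of-a-ball barriers centered at points outside $\Lambda_{z_2}$ whose free boundaries recede only a bounded amount by time $t_2$; then shift the supersolution by $\tilde r_3=r_3/\sin\theta$ so every point of $\Omega(t_2)$ lies at distance $\ge r_3$ from its free boundary, where $Z^\delta$ already exceeds $\frac{a+\delta}{b-\delta}$ by Proposition \ref{semiwave}. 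The vertex itself is handled not by ad hoc rounding but by working on the inner parallel cone $\Lambda^\phi_R=\{x\in\Lambda^\phi: d(x,\partial\Lambda^\phi)>R\}$, whose boundary is automatically smooth (spherical cap plus conical sheet). Also your curvature heuristic is wrong in sign: for $\phi\in(\pi/2,\pi)$ the distance function has $\Delta d=\frac{(N-2)\cos\theta}{|x'|}>0$ on the conical part (and $\frac{N-1}{|z|}>0$ over the cap), which is the \emph{unfavorable} sign for a supersolution; it is absorbed not by geometry but by running the front at the slightly faster speed $(1-\delta)^{-2}c^\delta/\sin\phi$ with reaction constants $a+\delta$, $b-\delta$ and choosing $R\ge d(N-1)/(\delta c^\delta)$.

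For the lower bound your plan also differs from the paper's and is not complete as stated: a conical subsolution $Z_{k'}(\mathrm{dist}(\cdot,\partial(\Lambda^{\phi'}-\beta' te_N)))$ fails to be $C^1$ along the cone's axis (where the distance to the lateral boundary is not differentiable) as well as at the vertex, and the proposed fixes (``truncate to a slab,'' ``handle the far field by interior convergence'') are not worked out and would need a genuinely new argument. The paper avoids this entirely: using \eqref{addau0} it gets $u_0\ge\sigma_0$ on a shifted cone $\Lambda^\phi+z_1$, then compares $u$ with a \emph{family of one-dimensional half-space problems} \eqref{eqlow}, one for each inward normal direction $\nu_z$ of $\partial(\Lambda^\phi+z_1)$; each planar solution spreads at speed at least $c_*-\epsilon$ and converges to $a/b$ behind its front (Lemma \ref{eslowra}, proved with a shifted semi-wave subsolution), and the union of the resulting half-spaces is exactly $\Lambda^\phi-\frac{c_*-\epsilon}{\sin\phi}t\,e_N$. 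This simultaneously yields the lower half of \eqref{outconebehu}; the upper half should come from the global-in-space bound $u(t,\cdot)\le\frac{a+\delta}{b-\delta}$ for $t\ge t_2$, not from ``upgrading'' the locally uniform convergence $U\to a/b$ by a covering argument, which is not justified over a region expanding linearly in time.
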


We prove this theorem by a series of lemmas. We first show 
\begin{equation}\label{Omega(t)-lb}
\Lambda^\phi-\left(\frac{c_*}{\sin\phi}-\epsilon\right)t\,{e}_N\subset \Omega(t) \mbox{ for all large $t$. }
\end{equation}
By assumption \eqref{addau0}, we can find a one-dimensional function $w_0\in C^2([0,\infty))$ such that 
\begin{equation}\label{lowu0}
0<w_0\leq \sigma_0:=\frac{1}{2}\liminf_{d(x,\partial \Omega_0)\to\infty } u_0(x) \hbox{ in } (0,\infty),\quad w_0(0)=0 \quad\hbox{and}\quad  w_0'>0 \hbox{ in } [0,\infty).
\end{equation}
Then we consider the following one space dimension free boundary problem   
\begin{equation}\label{eqlow} 
\left \{ \begin{array}{ll} 
\displaystyle\medskip w_t-d w_{yy}=aw-bw^2, \;\;&t>0, \; \rho(t)<y<\infty, \\
\displaystyle\medskip w(t,\rho(t))=0, \;\; &t>0,\\
\displaystyle\medskip \rho'(t)=-\mu w_y (t,\rho(t)), \;\; &t>0,\\
\displaystyle  \rho(0)=0, \;\;\; w(0,y)=w_0 (y), \;\; & 0 \leq y <\infty.
\end{array} \right.
\end{equation}
It follows from \cite[Theorem 2.11]{DDL} that, \eqref{eqlow} admits a (unique)
classical solution $(w(t,y), \rho(t))$ defined for all $t>0$ and
$\rho'(t)<0$, $w(t,y)>0$ for $\rho(t) \leq y<\infty$, $t>0$.
Moreover, we have the following comparison result.

\begin{lem}\label{onedimcom}
 For any given $\tilde{T}\in (0,\infty)$, suppose  $\tilde{\rho}\in C^1\big([0,\tilde{T}]\big)$ and  $\tilde{w}\in  C^{1,2}\big(\tilde{D}_{\tilde{T}}\big)$ with $\tilde{D}_{\tilde{T}}=\big\{(t,y): \, 0\leq t\leq \tilde{T},\, \tilde{\rho}(t)\leq y<\infty\big\}$. If 
\begin{equation*}\left\{\baa{ll}
\displaystyle\medskip \tilde{w}_t -d\tilde{w}_{yy}\geq\tilde{w}(a-b \tilde{w}) ,& 0<t\leq \tilde{T},\,\,\,\tilde{\rho}(t)<y<\infty,\vspace{3pt}\\
\tilde{w}(t,\tilde{\rho}(t))= 0,\,\,\,\, \tilde{\rho}'(t)\leq -\mu \tilde{w}_y(t,\tilde{\rho}(t)),&0<t\leq \tilde{T},\eaa\right.
\end{equation*} 
and
$$\tilde{\rho}(0) \leq 0 \quad\hbox{and}\quad w_0(y)\leq \tilde{w}(0,y)\,\hbox{ in } \, [0,\infty),$$
 then the solution $(w,\rho)$ of problem \eqref{eqlow} satisfies
$$ \tilde{\rho}(t)\leq \rho(t) \,\hbox{ in }\, (0,\tilde{T}]\,\hbox{ and }\,  w(t,y)\leq \tilde{w}(t,y)\,\hbox{ for } \, 0<t\leq \tilde{T},\,\, \rho(t)\leq y <\infty.$$ 
\end{lem}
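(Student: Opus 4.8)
The plan is to prove Lemma~\ref{onedimcom} by a standard comparison/sliding argument for one-phase free boundary problems, adapted from the classical treatment of Stefan-type problems (as in \cite{DDL,DL,DG1}). The key point is that $(\tilde w,\tilde\rho)$ is a supersolution on $[0,\tilde T]$ whose free boundary sits to the left of $\rho(0)=0$ initially, and whose free boundary speed condition $\tilde\rho'(t)\le -\mu\tilde w_y(t,\tilde\rho(t))$ forces it to stay to the left for all later times, so long as the two free boundaries have not touched; once one controls the free boundaries, the interior comparison for the parabolic equation $w_t-dw_{yy}=w(a-bw)$ does the rest.

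First I would set $\eta(t):=\rho(t)-\tilde\rho(t)$, so that $\eta(0)\ge 0$. I would argue by contradiction: suppose the conclusion $\tilde\rho\le\rho$ on $(0,\tilde T]$ fails, and let $t_0:=\inf\{t\in(0,\tilde T]:\ \rho(t)<\tilde\rho(t)\}$. If $\eta(0)>0$, then by continuity $t_0>0$ and $\eta(t_0)=0$, $\eta'(t_0)\le 0$; if $\eta(0)=0$ one has to be slightly more careful at $t=0$ and typically perturbs $\tilde\rho$ downward by $-\delta$ and $\tilde w$ upward by a small exponentially growing term, proves the strict statement, and then lets $\delta\to0$. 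On $[0,t_0]$ we have $\tilde\rho(t)\le\rho(t)$, so the domain $\{(t,y):0\le t\le t_0,\ \rho(t)\le y<\infty\}$ on which $w$ lives is contained in the domain of $\tilde w$. On that common region, $W:=\tilde w-w$ satisfies a linear parabolic inequality $W_t-dW_{yy}-c(t,y)W\ge0$ with $c$ bounded (coming from the Lipschitz bound on $u\mapsto u(a-bu)$ on the bounded range of $w,\tilde w$), with $W(0,\cdot)\ge 0$ on $[0,\infty)$ and $W(t,\rho(t))=\tilde w(t,\rho(t))\ge 0$ (since $\tilde w\ge0$ and $w=0$ there; here one uses that $\tilde w\ge0$, which follows from its own defining inequalities and maximum principle, or is assumed/checked). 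A maximum principle on the unbounded strip — justified because $w,\tilde w$ are bounded — then gives $W\ge0$, i.e. $w\le\tilde w$ on $[0,t_0]$.

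Now comes the free boundary comparison at $t_0$. Since $w(t_0,\tilde\rho(t_0))\ge 0=w(t_0,\rho(t_0))$ and $\rho(t_0)=\tilde\rho(t_0)$, and $w>0$ just to the right of $\rho(t_0)$, a Hopf-lemma/boundary-point argument gives $w_y(t_0,\rho(t_0))\ge\tilde w_y(t_0,\tilde\rho(t_0))$ — more precisely $W=\tilde w-w\ge0$ vanishes at $(t_0,\rho(t_0))$, so $W_y(t_0,\rho(t_0))\ge 0$, hence $\tilde w_y(t_0,\tilde\rho(t_0))\ge w_y(t_0,\rho(t_0))$. Combining with the free boundary laws,
\[
\rho'(t_0)=-\mu w_y(t_0,\rho(t_0))\ge -\mu\tilde w_y(t_0,\tilde\rho(t_0))\ge \tilde\rho'(t_0),
\]
so $\eta'(t_0)=\rho'(t_0)-\tilde\rho'(t_0)\ge 0$. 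In the strict (perturbed) setting this inequality is strict, contradicting $\eta'(t_0)\le0$ at a first touching time; passing to the limit in the perturbation then yields $\tilde\rho(t)\le\rho(t)$ on $(0,\tilde T]$, and with this the interior comparison argument of the previous paragraph runs on the whole interval $[0,\tilde T]$ to give $w\le\tilde w$ there.

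The main obstacle I anticipate is the degenerate case $\eta(0)=0$, i.e. $\tilde\rho(0)=0=\rho(0)$: there the naive ``first touching time'' argument can fail because the two free boundaries start in contact, and one cannot immediately conclude $\eta'(0^+)\ge0$ without knowing a strict ordering of the normal derivatives at $t=0$, which need not hold. The standard fix — and the technical heart of the proof — is the perturbation device: replace $\tilde\rho$ by $\tilde\rho^\delta(t):=\tilde\rho(t)-\delta$ and $\tilde w$ by $\tilde w^\delta(t,y):=(1+\delta)\tilde w(t,y+\delta)$ (or a similar shift-and-dilate), check that $(\tilde w^\delta,\tilde\rho^\delta)$ is still a supersolution for small $\delta>0$ with $\tilde\rho^\delta(0)=-\delta<0$ and $w_0\le\tilde w^\delta(0,\cdot)$, run the strict argument above to get $\tilde\rho^\delta\le\rho$ and $w\le\tilde w^\delta$ on $[0,\tilde T]$, and finally let $\delta\to0$. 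Verifying that the perturbed pair remains a supersolution — in particular that the extra terms produced in the equation and in the free-boundary speed condition have the right sign for small $\delta$ — is the one place where a genuine (though routine) computation is needed; everything else is the maximum principle and Hopf's lemma on bounded-coefficient linear parabolic problems on the (unbounded but controlled) region between the free boundary and $y=+\infty$.
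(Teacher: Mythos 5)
Your skeleton (first touching time, interior comparison of $W=\tilde w-w$ on the common domain, Hopf's lemma at the contact point) is the direct route that the paper only mentions in passing ("along the lines of \cite{DL}"); the paper's own proof is different and much shorter: it extends $w$ and $\tilde w$ by zero and observes that they are, respectively, the weak solution (Theorems \ref{weakclassic}, \ref{unique}) and a weak supersolution (Theorem \ref{super-sub}) of the induced problem over $H_{\tilde T}$, so Theorem \ref{thmcomp} gives the conclusion at once, with no need to separate the free boundaries at $t=0$. In your route, however, the step you yourself call the technical heart — the perturbation handling $\tilde\rho(0)=0=\rho(0)$ — fails as proposed. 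For $\tilde w^\delta(t,y)=(1+\delta)\tilde w(t,y+\delta)$, $\tilde\rho^\delta(t)=\tilde\rho(t)-\delta$, the PDE inequality survives the dilation, but the free-boundary law for the perturbed pair requires
\[
(\tilde\rho^\delta)'(t)=\tilde\rho'(t)\;\le\;-\mu\,\partial_y\tilde w^\delta\big(t,\tilde\rho^\delta(t)\big)=-\mu(1+\delta)\,\tilde w_y\big(t,\tilde\rho(t)\big),
\]
and since $\tilde w\ge0$ forces $\tilde w_y(t,\tilde\rho(t))\ge0$, this is \emph{stronger} than the hypothesis $\tilde\rho'\le-\mu\tilde w_y$; it fails precisely in the generic applications where the boundary law holds with equality and $\tilde w_y>0$ (e.g.\ when Lemma \ref{onedimcom} is used to compare $w$ with translates of itself in the proof of Lemma \ref{eslowra}). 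The dilation increases the boundary gradient, so the extra term has the wrong sign. The plain shift $\tilde w(t,y+\delta)$ is not available either, since the initial comparison $w_0(y)\le\tilde w(0,y+\delta)$ would need monotonicity of $\tilde w(0,\cdot)$, which is not assumed. So the "routine sign check" you defer is exactly where the argument breaks.

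A correct repair perturbs the solution side rather than the supersolution: compare $\tilde w$ with the solution $(w_\delta,\rho_\delta)$ of \eqref{eqlow} with $\mu$ replaced by $(1-\delta)\mu$ (or with $w_0$ replaced by $(1-\delta)w_0$). Then at any contact time, including $t=0$ (where $w_0'>0$ from \eqref{lowu0} is used), Hopf's lemma applied to $w_\delta$ gives $w_{\delta,y}>0$ at its free boundary, whence $\rho_\delta'=-(1-\delta)\mu\,w_{\delta,y}>-\mu\,w_{\delta,y}\ge-\mu\,\tilde w_y\ge\tilde\rho'$, a strict inequality that closes the touching-time argument; letting $\delta\to0$ then requires a continuous-dependence statement for \eqref{eqlow}, an extra ingredient you would have to quote from \cite{DL,DDL} or prove. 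Alternatively, follow the paper and reduce everything to Theorems \ref{super-sub} and \ref{thmcomp}. Finally, note that either route uses $\tilde w\ge0$ and $\tilde w\in L^\infty$ (the latter for your maximum principle on the unbounded strip, and for the paper's weak-supersolution framework); neither is stated in the lemma, and you flag only the former.
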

\begin{proof}
If for any $0\leq t\leq \tilde{T}$, we extend $w(t,y)$ (resp. $\tilde{w}(t,y)$) to be zero for $y< \rho(t)$ (resp. $y< \tilde{\rho}(t)$), then it is easily checked that $w$ (reps. $\tilde{w}$) is a weak solution (resp. weak supersolution) of the free boundary problem induced from \eqref{eqlow} over $H_{\tilde{T}}$, and hence the desired comparison result follows from Theorem \ref{thmcomp}. (One could also prove the result directly along the lines of \cite{DL}.)
\end{proof}

We next show the following estimate of $(w,\rho)$.

\begin{lem}\label{eslowra}
For any $\epsilon>0$, there exists $T_1=T_1(\epsilon)>0$ such that 
\begin{equation}\label{eslowrap1}
\rho(t) \leq  -\Big(c_{*}-\frac{2}{3}\epsilon\Big) t\,\hbox{ for all } t\geq T_1,  
\end{equation}
and
\begin{equation}\label{eslowrap2}
\liminf_{t\to\infty}\left[\inf_{y\geq -(c_{*}-\frac 23\epsilon) t} w(t,y)\right]\geq \frac{a}{b}.
\end{equation}
\end{lem}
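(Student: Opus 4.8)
The plan is to squeeze $(\rho,w)$ from below by an explicit traveling subsolution of \eqref{eqlow} built from the semi-wave of Proposition \ref{semiwave}, and then to read \eqref{eslowrap1}--\eqref{eslowrap2} off the resulting comparison. Fix $\epsilon>0$ and choose $c$ with $\max\{0,\,c_*-\tfrac23\epsilon\}<c<c_*$; then $\eta:=c-(c_*-\tfrac23\epsilon)>0$. By Proposition \ref{semiwave} the semi-wave $Z_c$ exists, with $Z_c(0)=0$, $Z_c'>0$, $0<Z_c<a/b$ and $Z_c(r)\to a/b$ as $r\to\infty$, and — since $k\mapsto Z_k'(0)$ is strictly decreasing and $\mu Z'_{c_*}(0)=c_*>c$ — we have $\mu Z_c'(0)>c$, so we may fix $\gamma_0\in(0,1)$ with $\mu(1-\gamma_0)Z_c'(0)\ge c$. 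For $\gamma\in(0,\gamma_0]$ and $t_0>0$ to be determined, set $\underline\rho(t):=-c(t-t_0)-1$ and $\underline w(t,y):=(1-\gamma)Z_c\big(y-\underline\rho(t)\big)$ on $\{y>\underline\rho(t)\}$. Using $-dZ_c''+cZ_c'=aZ_c-bZ_c^2$ one checks $\underline w_t-d\underline w_{yy}-(a\underline w-b\underline w^2)=-\gamma(1-\gamma)bZ_c^2\le0$, while $\underline w(t,\underline\rho(t))=0$ and $\underline\rho'(t)=-c\ge-\mu(1-\gamma)Z_c'(0)=-\mu\,\underline w_y(t,\underline\rho(t))$; hence $(\underline w,\underline\rho)$ is a subsolution of \eqref{eqlow} in the sense required for the subsolution version of Lemma \ref{onedimcom}.

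To initialize this comparison I use two facts about $(w,\rho)$. First, since the initial range of \eqref{eqlow} is the whole half-line $[0,\infty)$, the problem spreads: $\rho(t)\to-\infty$ and $w(t,y)\to a/b$ locally uniformly in $y\in\R$ as $t\to\infty$. (This is the known long-time behavior of one-dimensional free boundary logistic problems; see \cite{DDL}. It can also be obtained directly by comparing $w$ from below, on the fixed half-line $(0,\infty)$, with the solution of the corresponding logistic Dirichlet problem — which tends to $a/b$ because the domain is unbounded — and then ruling out $\rho(t)\downarrow\rho_\infty>-\infty$ via a Hopf-lemma lower bound for $-\rho'(t)$.) Second, since $w_0$ is increasing (cf. \eqref{lowu0}), $w(t,\cdot)$ is nondecreasing in $y$ for every $t>0$: $w_y$ solves a linear parabolic equation on the moving domain, is positive at the left free boundary by Hopf's lemma, and is nonnegative initially. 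Now fix $\gamma\in(0,\gamma_0]$, set $\delta:=\tfrac{\gamma a}{2b}$, and use spreading to pick $t_0=t_0(\gamma)$ so large that $\rho(t_0)\le-1$ and $w(t_0,y)\ge a/b-\delta$ for $y\in[-1,0]$; by monotonicity $w(t_0,y)\ge a/b-\delta$ for all $y\ge-1=\underline\rho(t_0)$, whereas $\underline w(t_0,y)=(1-\gamma)Z_c(y+1)<(1-\gamma)\tfrac ab<\tfrac ab-\delta$ there. Together with $\underline\rho(t_0)=-1\ge\rho(t_0)$, this is precisely the hypothesis of the subsolution version of Lemma \ref{onedimcom} with the time origin shifted to $t_0$, which therefore gives, for all $t\ge t_0$,
\begin{equation*}
\rho(t)\le\underline\rho(t)=-c(t-t_0)-1,\qquad w(t,y)\ge(1-\gamma)\,Z_c\big(y-\underline\rho(t)\big)\quad\text{for } y\ge\underline\rho(t).
\end{equation*}

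Finally I extract the conclusions. From $\rho(t)\le-ct+ct_0$ and $\eta=c-(c_*-\tfrac23\epsilon)>0$ we get $\rho(t)\le-(c_*-\tfrac23\epsilon)t$ as soon as $t\ge ct_0/\eta$; choosing here one fixed $\gamma\in(0,\gamma_0]$ proves \eqref{eslowrap1} with $T_1:=c\,t_0(\gamma)/\eta$. For \eqref{eslowrap2} we keep $\gamma$ variable: for $t$ large (depending on $\gamma$), every $y\ge-(c_*-\tfrac23\epsilon)t$ satisfies $y\ge\underline\rho(t)$ and $y-\underline\rho(t)\ge\eta t-ct_0(\gamma)+1$, so by monotonicity of $Z_c$,
\begin{equation*}
\inf_{y\ge-(c_*-\frac23\epsilon)t} w(t,y)\ \ge\ (1-\gamma)\,Z_c\!\big(\eta t-ct_0(\gamma)+1\big)\ \longrightarrow\ (1-\gamma)\tfrac ab\quad (t\to\infty),
\end{equation*}
whence $\liminf_{t\to\infty}\inf_{y\ge-(c_*-\frac23\epsilon)t}w(t,y)\ge(1-\gamma)a/b$ for every $\gamma\in(0,\gamma_0]$; letting $\gamma\downarrow0$ yields \eqref{eslowrap2}.

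Once the two facts in the second paragraph are in place the argument is mechanical, so I expect the only real obstacle to be establishing spreading for \eqref{eqlow} together with the monotone profile of $w(t,\cdot)$ — i.e. verifying that long after the free boundary has receded, $w(t,\cdot)$ is uniformly close to $a/b$ on a fixed compact set, which is exactly what makes the initial ordering $\underline w(t_0,\cdot)\le w(t_0,\cdot)$ available. Everything else is bookkeeping with the three parameters $c$, $\gamma$ and $t_0$.
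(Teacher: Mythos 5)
Your proof is correct, and its skeleton---first establish spreading of \eqref{eqlow} and monotonicity of $w(t,\cdot)$ to initialize, then squeeze $(w,\rho)$ from below by a traveling semi-wave subsolution via comparison, then read off the speed---is the same as the paper's; the genuinely different ingredient is the subsolution itself. The paper perturbs the reaction coefficients: it takes $c_\delta=c_*(\mu,a-\delta,b+\delta,d)$ and $\underline w=(1-\delta)^2Z_\delta(y-\eta(t))$ with $\eta(t)=-(1-\delta)^2c_\delta t$, so the Stefan condition holds with equality, the PDE inequality comes from comparing the perturbed and original nonlinearities, and the speed is recovered through the continuity of $c_*(\mu,a,b,d)$ in its arguments from Proposition \ref{semiwave}. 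You instead keep $(a,b)$ fixed and use the semi-wave $Z_c$ with a subcritical speed $c<c_*$ scaled by $(1-\gamma)$: the PDE inequality is the exact cancellation $-\gamma(1-\gamma)bZ_c^2\le 0$, and the Stefan inequality follows from $\mu Z_c'(0)>\mu Z_{c_*}'(0)=c_*>c$ (strict monotonicity of $k\mapsto Z_k'(0)$) once $\gamma$ is small. This buys a construction that never invokes continuity of $c_*$ with respect to the coefficients, and keeping $\gamma$ free until the end makes the passage to \eqref{eslowrap2} (namely $\liminf\ge(1-\gamma)a/b$ for every small $\gamma$, then $\gamma\downarrow 0$) a little cleaner than the paper's fixing of $\delta=\delta_\epsilon$. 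The two initialization facts you use are exactly the paper's: spreading of \eqref{eqlow} (the paper obtains it by repeating the proof of Theorem \ref{thm6.1} in one dimension, you by comparison on a fixed half-line plus a Hopf-lemma bound on $-\rho'$; both are standard and rest on \cite{DL}-type results) and monotonicity of $w(t,\cdot)$ in $y$ (the paper via Lemma \ref{onedimcom} and uniqueness, you via the linear equation satisfied by $w_y$). Your appeal to the ``subsolution version'' of Lemma \ref{onedimcom} is legitimate: it follows from Theorem \ref{thmcomp} exactly as the stated version does, and the paper itself applies the comparison in this reversed role when it concludes $w(t+t_1,\cdot)\ge\underline w(t,\cdot)$.
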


\begin{proof}
The proof follows from \cite[Theorem 4.2]{DL} with some modifications. For the sake of completeness,
and also for the convenience of later applications, we include the details below. 

We first claim that for any given small $\delta>0$, there exists $t_1=t_1(\delta)>0$ such that 
\begin{equation}\label{eqlargt1}
w(t_1,y)\geq \frac{a-\delta}{b+\delta}  \hbox{ for all } y\geq 0.  
\end{equation}
Indeed, applying the proof of Theorem \ref{thm6.1} to the one-dimensional problem \eqref{eqlow}, we easily
obtain $\lim_{t\to\infty} \rho(t)=-\infty$ and   
$$ \lim_{t \to \infty} w(t,y)=\frac{a}{b} \;\;\mbox{locally uniformly for $y \in \R$}.$$
Since $w_0(y)$ is nondecreasing in $y\in[0,\infty)$, it follows from the comparison result stated in Lemma \ref{onedimcom} and the uniqueness of solution to problem \eqref{eqlow} that, for any fixed $t>0$, $w(t,y)$ is nondecreasing in $y\in [\rho(t),\infty)$. We thus obtain 
$$ \liminf_{t \to \infty} w(t,y)\geq \frac{a}{b} \;\;\mbox{uniformly for $y \geq 0 $},$$
which clearly implies \eqref{eqlargt1}. 

Next, we construct a subsolution to problem \eqref{eqlow}. To do this, we need a few more notations. For any small $\delta>0$, denote $$c_{\delta}:=c_*(\mu,a-\delta,b+\delta,d)$$
and denote by $Z_{\delta}(r)$ the solution of \eqref{eqwave} with $k$, $a$, $b$ replaced by $c_{\delta}$, $a-\delta$, $b+\delta$, respectively. Set
$$
\eta(t)=\eta_\delta(t):=-(1-\delta)^2c_\delta t \,\hbox{ for } \,t>0,  
$$
and
$$
\underline{w}(t,y)=\underline{w}_\delta(t,y):=(1-\delta)^2 Z_{\delta}(y-\eta_\delta(t)) \,\hbox{ for } \,\eta_\delta(t)\leq y<\infty,\, t>0.
$$

It is straightforward to verify that 
$$\underline{w}(t,\eta(t))=0, \quad \eta'(t)=-\mu \underline{w}_y(t,\eta(t))\,\,\hbox{ for } \, t>0. $$
Moreover, since $Z'_{\delta}>0$ in $[0,\infty)$,  direct calculations yield 
\begin{equation*}
\left.\begin{array}{ll}
\displaystyle\medskip \underline{w}_t-d\underline{w}_{yy}&=(1-\delta)^{4}c_{\delta}Z_\delta'-d(1-\delta)^{2}Z_\delta''\\
&\displaystyle\medskip\leq (1-\delta)^{2}(Z_\delta'-d Z_\delta'')\\
&\displaystyle\medskip=(1-\delta)^{2}\big[(a-\delta)Z_\delta-(b+\delta) Z^2_\delta\big]\\
&\leq (a-\delta)\underline{w}-(b+\delta)\underline{w}^2
\end{array}\right.
\end{equation*}
for $\eta(t)<y<\infty$, $t>0$. By Proposition \ref{semiwave}, we have
$$ \underline{w}(0,y)=(1-\delta)^2Z_{\delta}({y})<(1-\delta)^2\frac{a-\delta}{b+\delta} \,\,\hbox{ for }  0\leq y <\infty.$$
This together with \eqref{eqlargt1} implies   
$$w(t_1,y) \geq \underline{w}(0,y) \,\hbox{ for }  0\leq y<\infty.  $$
It then follows from Lemma \ref{onedimcom} that 
\begin{equation}\label{comrelow}
\rho(t+t_1) \leq \eta(t), \quad  w(t+t_1,y)\geq \underline{w}(t,y) \,\hbox{ for } \,  \eta(t)\leq y<\infty,\,t>0. 
\end{equation}

Since $\lim_{\delta\to 0} (1-\delta)^2c_{\delta}=c_*:=c_*(\mu,a,b,d)$, for any $\epsilon>0$, we can find some $\delta_\epsilon\in (0,\epsilon)$ such that 
\begin{equation}\label{chdelta}
\big|(1-\delta_{\epsilon})^2c_{\delta_{\epsilon}}- c_*  \big|\leq \epsilon/2.   
\end{equation}
We now fix $\delta=\delta_{\epsilon}$ in $Z_{\delta}$, $\eta_\delta$ and $t_1(\delta)$.
Then \eqref{comrelow} implies 
$$
\rho(t)\leq -(1-\delta_{\epsilon})^2c_{\delta_{\epsilon}} (t-t_1) \leq -\Big(c_*-\frac{2}{3}\epsilon\Big)t-\frac{\epsilon}{6}t +(1-\delta_{\epsilon})^2c_{\delta_{\epsilon}}t_1 \,\hbox{ for } t\geq t_1.
$$
Thus \eqref{eslowrap1} holds with $T_1=6\epsilon^{-1}(1-\delta_{\epsilon})^2c_{\delta_{\epsilon}}t_1$.

It remains to prove \eqref{eslowrap2}. With $\delta=\delta_\epsilon$ chosen as above, it follows from Proposition \ref{semiwave} that there exists $y_0>0$ sufficiently large such that 
\begin{equation}\label{ineqlowinf}
Z_{\delta_{\epsilon}}(y)\geq \frac{a-2\delta_{\epsilon}}{b+2\delta_{\epsilon}}  \,\hbox{ for all } \,y\geq y_0.  
\end{equation}
On the other hand, by \eqref{chdelta}, we clearly have 
$$y-\eta(t-t_1)\geq y+\Big(c_*-\frac{2}{3}\epsilon\Big)t+\frac{\epsilon}{6}t-(1-\delta_{\epsilon})^2c_{\delta_{\epsilon}}t_1  \,\hbox{ for } t\geq t_1.$$
Thus, if we choose $\tilde{t}_1=6\epsilon^{-1}[y_0+(1-\delta_{\epsilon})^2c_{\delta_{\epsilon}}t_1]$, then 
$$y-\eta(t-t_1)\geq y_0\,\hbox{ for all }   y\geq -\Big(c_*-\frac{2}{3}\epsilon\Big)t,\,t\geq \tilde{t}_1.$$
This together with \eqref{comrelow} and \eqref{ineqlowinf} implies 
$$w(t,y)\geq \underline{w}(t-t_1,y) \geq (1-\delta_{\epsilon})^2\frac{a-2\delta_{\epsilon}}{b+2\delta_{\epsilon}} \,\hbox{ for all} \, y\geq -\Big(c_*-\frac{2}{3}\epsilon\Big)t,\,t\geq \tilde{t}_1.$$
Since 
$$(1-\delta_{\epsilon})^2\frac{a-2\delta_{\epsilon}}{b+2\delta_{\epsilon}} \to \frac{a}{b} \hbox{ as } {\epsilon}\to 0, $$
this gives \eqref{eslowrap2}, and the proof of Lemma \ref{eslowra} is now complete.
\end{proof}

\begin{rem}
The conclusions in Lemma 3.5 can be considerably sharpened (though they are not needed in this paper). It is possible to modify the method of \cite{DMZ1} to show that, as $t\to\infty$,
\[
\rho(t)-c_*t\to C\in\R^1, \;\; \sup_{y\in[\rho(t),\infty)}\big|w(t,y)-Z_{c_*}(y-\rho(t))\big|\to 0,
\]
where $Z_{c_*}$ is given in Proposition 3.2. 
\end{rem}

\begin{lem}\label{lowbound}
Let $u(t,x)$ and $\Omega(t)$ be given in the statement of Theorem \ref{spreadspeed}. Then for any $\epsilon>0$, there exists $T_2=T_2(\epsilon)>0$ such that \eqref{Omega(t)-lb} holds
 for all $ t\geq T_2$,
and 
\begin{equation}\label{infubound}
\liminf_{t\to\infty}\left[{\inf}_{x\in   \Lambda^{\phi}-\big(\frac{c_*}{\sin\phi}-\epsilon\big)t\,{e}_N } u(t,x)\right]\geq \frac{a}{b}.
\end{equation}
\end{lem}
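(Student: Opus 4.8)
The plan is to build, for every unit vector $\omega$ orthogonal to $e_N$, a \emph{planar} weak subsolution of \eqref{eqfrfisher} whose positivity set is an expanding half-space with outer normal $\nu_\omega:=-\sin\phi\,e_N+\cos\phi\,\omega$, obtained by ``rotating'' the one-dimensional semi-wave $(w,\rho)$ of \eqref{eqlow} into the direction $\nu_\omega$, and then to take the union over all such $\omega$. Three elementary geometric facts, valid for $\phi\in(\pi/2,\pi)$, will be used: (a) $\R^N\setminus(\Lambda^\phi+\xi_1 e_N)$ is a closed convex cone admitting $\{(x-\xi_1 e_N)\cdot\nu_\omega\ge 0\}$ as a supporting half-space for every $\omega$, so that $\{(x-\xi_1 e_N)\cdot\nu_\omega<-\eta\}\subset\Lambda^\phi+\xi_1 e_N\subset\Omega_0$ and $d(x,\partial\Omega_0)\ge\eta$ on that half-space; (b) $\{x\cdot\nu_\omega<M\}=\{z\cdot\nu_\omega<0\}-\tfrac{M}{\sin\phi}e_N$, hence $\bigcup_\omega\{(x-\xi_1 e_N)\cdot\nu_\omega<M\}=\Lambda^\phi+\big(\xi_1-\tfrac{M}{\sin\phi}\big)e_N$; (c) $\Lambda^\phi+\alpha e_N\supset\Lambda^\phi+\beta e_N$ whenever $\alpha\le\beta$.

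First I fix $\sigma_0:=\tfrac12\liminf_{d(x,\partial\Omega_0)\to\infty}u_0(x)>0$ (this is the constant in \eqref{lowu0}), choose $\rho_*>0$ so that $u_0(x)>\sigma_0$ whenever $d(x,\partial\Omega_0)>\rho_*$ (possible since the $\liminf$ equals $2\sigma_0>\sigma_0$), pick $w_0$ as in \eqref{lowu0}, and let $(w,\rho)$ be the solution of \eqref{eqlow}. For each $\omega$ I put $\Phi_\omega(t,x):=(x-\xi_1 e_N)\cdot\nu_\omega+\rho_*+\rho(t)$, $\Omega_\omega(t):=\{\Phi_\omega(t,\cdot)<0\}$, and $\underline u_\omega(t,x):=w\big(t,-(x-\xi_1 e_N)\cdot\nu_\omega-\rho_*\big)$ on $\Omega_\omega(t)$, extended by $0$ elsewhere. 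Since $|\nabla_x\Phi_\omega|\equiv1$ and $\Delta_x\big((x-\xi_1 e_N)\cdot\nu_\omega\big)=0$, a direct computation gives $\partial_t\underline u_\omega-d\Delta_x\underline u_\omega=a\underline u_\omega-b\underline u_\omega^2$ in $\Omega_\omega(t)$, and on $\partial\Omega_\omega(t)$ one has $\underline u_\omega=0$ and $(\Phi_\omega)_t=\rho'(t)=-\mu w_y(t,\rho(t))=\mu\nabla_x\underline u_\omega\cdot\nabla_x\Phi_\omega$; moreover, using fact (a) with $\eta=\rho_*$ together with $w_0\le\sigma_0<u_0$ on $\Omega_\omega(0)=\{(x-\xi_1 e_N)\cdot\nu_\omega<-\rho_*\}$, the initial datum of $\underline u_\omega$ lies below $u_0$. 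Hence Theorem \ref{super-sub} shows $\underline u_\omega$ is a weak subsolution of \eqref{eqfrfisher} over every $H_T$, and the comparison principle Theorem \ref{thmcomp} yields $u\ge\underline u_\omega$ a.e.; consequently, for all $t>0$, $\Omega(t)\supset\Omega_\omega(t)$ and $u(t,x)\ge w\big(t,-(x-\xi_1 e_N)\cdot\nu_\omega-\rho_*\big)$ on $\Omega_\omega(t)$.

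Next I invoke Lemma \ref{eslowra} \emph{with $\epsilon$ replaced by $\epsilon\sin\phi$}: there is $T_1$ so that $\rho(t)\le-(c_*-\tfrac23\epsilon\sin\phi)t$ for $t\ge T_1$ and $\liminf_{t\to\infty}\inf_{y\ge-(c_*-\frac23\epsilon\sin\phi)t}w(t,y)\ge a/b$. Combining $\Omega(t)\supset\bigcup_\omega\Omega_\omega(t)$ with fact (b) applied to $M=-\rho_*-\rho(t)$ gives $\Omega(t)\supset\Lambda^\phi+\big(\xi_1+\tfrac{\rho_*+\rho(t)}{\sin\phi}\big)e_N$, and the estimate on $\rho(t)$ together with fact (c) then produces $T_2=T_2(\epsilon)$ such that $\Lambda^\phi-\big(\tfrac{c_*}{\sin\phi}-\epsilon\big)t\,e_N\subset\Omega(t)$ for $t\ge T_2$, which is \eqref{Omega(t)-lb}. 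For \eqref{infubound}, given $t\ge T_2$ large and $x\in\Lambda^\phi-\big(\tfrac{c_*}{\sin\phi}-\epsilon\big)t\,e_N$, the same geometric identity produces $\omega$ with $x\in\Omega_\omega(t)$; writing $\tilde y:=-(x-\xi_1 e_N)\cdot\nu_\omega-\rho_*$, the membership of $x$ in that cone forces (after an elementary estimate, and enlarging $T_2$ if necessary) $\tilde y\ge-(c_*-\tfrac23\epsilon\sin\phi)t$, so that $u(t,x)\ge w(t,\tilde y)\ge\inf_{y\ge-(c_*-\frac23\epsilon\sin\phi)t}w(t,y)$, a bound uniform in such $x$; letting $t\to\infty$ and using the second conclusion of Lemma \ref{eslowra} gives \eqref{infubound}.

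I expect the main difficulty to be the bookkeeping of the constants rather than any single hard estimate: the target cone recedes in the $-e_N$ direction at speed $c_*/\sin\phi$ while each planar wave moves only at speed $\approx c_*$ along its own normal, so Lemma \ref{eslowra} must be applied with the deliberately smaller parameter $\epsilon\sin\phi$ in order that the $O(t)$ discrepancies generated by the shifts (through $\xi_1$ and $\rho_*$) be absorbed into the remaining room. The other point requiring care is verifying that each $\underline u_\omega$ meets the hypotheses of Theorem \ref{super-sub}, and in particular that its initial datum lies below $u_0$ \emph{uniformly in $\omega$}; this is exactly where the hypothesis \eqref{addau0} on $u_0$, via fact (a), is indispensable.
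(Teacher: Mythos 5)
Your argument is correct and is essentially the paper's own proof: the paper likewise compares $u$ with the one-dimensional solution $(w,\rho)$ of \eqref{eqlow} transplanted onto the family of half-spaces supporting the shifted cone (indexed there by boundary points $z\in\partial\Lambda_{z_1}$ with inward normals $\nu_z$, in your version by unit vectors $\omega\perp e_N$ via $\nu_\omega$ -- the same family of directions), recovers the cone as the union of these half-spaces, and then applies Lemma \ref{eslowra} with the same $\epsilon\sin\phi$ bookkeeping. The only cosmetic difference is that the paper identifies each planar comparison function as the unique weak solution of its own half-space problem via Theorems \ref{weakclassic} and \ref{unique}, whereas you certify it as a weak subsolution through Theorem \ref{super-sub}; both routes then feed into Theorem \ref{thmcomp} identically.
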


\begin{proof}
By the assumption \eqref{addau0}, there exists $r_1>0$ sufficiently large such that 
$$u_0(x) \geq \sigma_0:= \frac{1}{2}\liminf_{d(\tilde{x},\partial \Omega_0)\to\infty}u_0(\tilde{x}) \hbox{ for all } x\in \Omega_0 \hbox{ with } d(x,\partial \Omega_0)\geq r_1. $$
Then, due to the assumption \eqref{outscone}, we have 
\begin{equation}\label{initialcom} 
u_0(x) \geq  \sigma_0  \hbox{ for all } x\in  \Lambda^{\phi}+ \Big(\xi_1+\frac{r_1}{\sin\theta}\Big)e_N,
\end{equation}
 where $\theta=\pi-\phi$ (and so $\sin\theta=\sin\phi$).

Let $(w,\rho)$ be the unique solution to problem \eqref{eqlow} with initial function $w_0$ satisfying \eqref{lowu0}. For  convenience of
notation, we write
\[
z_1:=\Big(\xi_1+\frac{r_1}{\sin\theta}\Big)e_N\quad\hbox{and}\quad \Lambda_{z_1}:= \Lambda^{\phi}+z_1.
\]
For any fixed $z\in \partial \Lambda_{z_1}\backslash \{ z_1\}$, let $\nu_z$ be the inward unit normal vector of $\Lambda_{z_1}$ at $z$, and define
$$
\Omega_{z}(t)
=\Big\{x:\,\, x\cdot \nu_z \geq \rho(t)+r_1+ \xi_1\sin\theta\Big\}$$
($\Omega_{z}(0)$ is illustrated in Figure \ref{fighalfs}), and 
$$ w_z(t,x)=w(t,x\cdot\nu_z-r_1- \xi_1\sin\theta).$$
\begin{figure}[h]
\centering
\def\svgwidth{8cm}
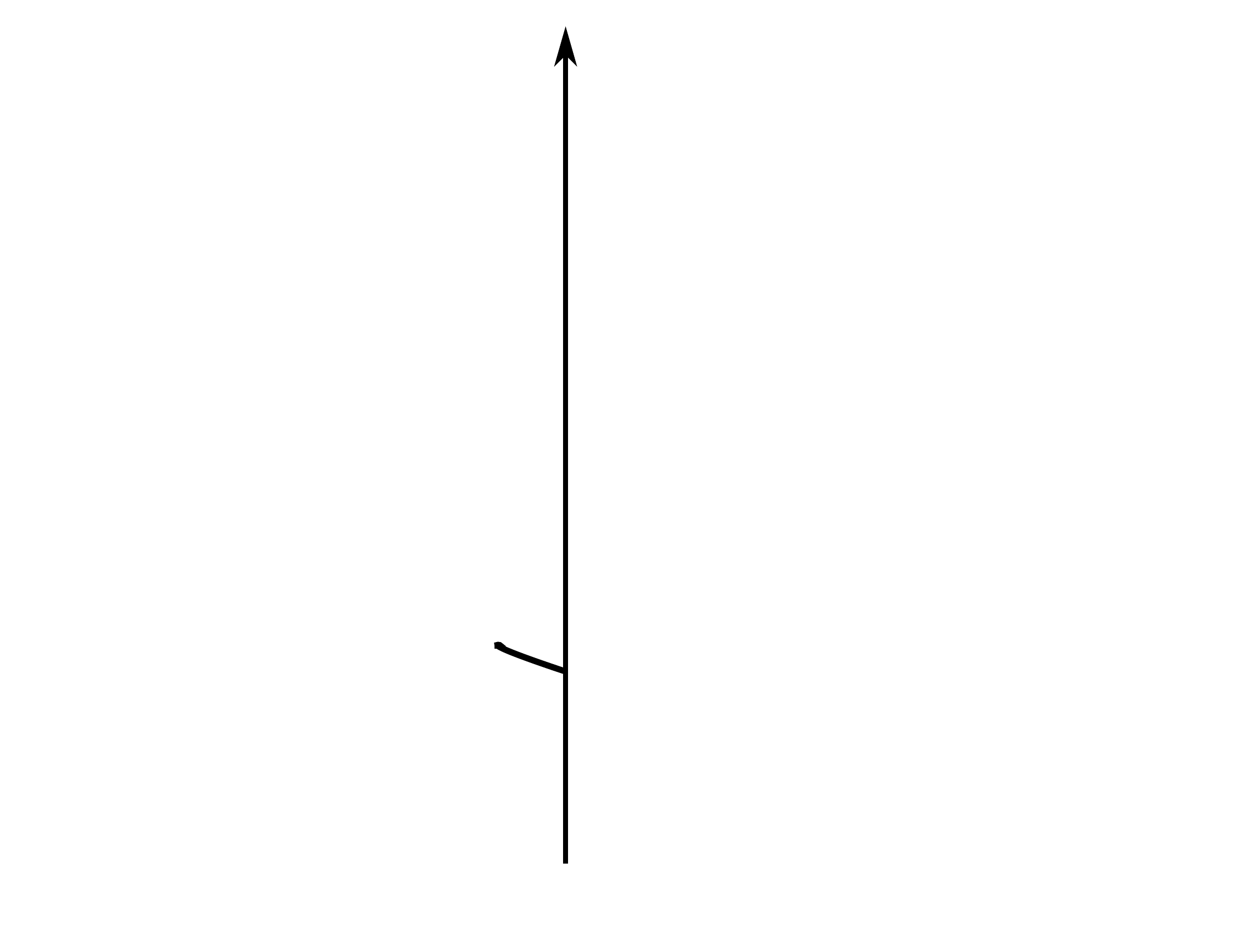
\caption{The domain $\Omega_z(0)$ with given $z\in \partial \Lambda_{z_1}\backslash \{ z_1\}$ }\label{fighalfs}
\end{figure}
 We also extend $w_z(t,\cdot)$ to be zero outside $\Omega_z(t)$ for $t\geq 0$. Clearly, $(w_z,\Omega_z)$ is a classical solution of the following problem
\begin{equation*} 
\left \{ \begin{array} {ll} \displaystyle\medskip  w_t-d \Delta w=w(a-b w) \;\; &
\mbox{for $x \in\Omega(t), \; t>0$},\\
\displaystyle\medskip w=0, \; w_t=\mu|\nabla_x w|^2 \;&\mbox{for $x \in \partial\Omega(t), \; t>0$}, \\
w(0,x)=w_0 (x\cdot\nu_{z}-r_1- \xi_1\sin\theta) \;\; & \mbox{for $x \in \Omega_{z}(0)$},
\end{array} \right.
\end{equation*}
and hence by Theorems \ref{weakclassic} and \ref{unique}, it is the unique weak solution. 

By Lemma \ref{eslowra}, for any $\epsilon>0$, there exists $T_1=T_1(\epsilon)>0$ such that 
\begin{equation*}
\Omega_{z}(t)  \supset \Big\{x: \,x\cdot\nu_z\geq  -\Big(c_{*}-\frac{2}{3}\epsilon\Big)t +r_1+ \xi_1\sin\theta\Big\}\,\hbox{ for all } t\geq T_1,  
\end{equation*}
and
\begin{equation*}
\liminf_{t\to\infty}\left[{\inf}_{x\cdot \nu_z -r_1- \xi_1\sin\theta\geq -(c_{*}-\frac 23\epsilon) t} w_z(t,x)\right]\geq \frac{a}{b}.
\end{equation*}
Therefore, if we choose 
$$ T_2:=\max\Big\{T_1,\; \frac{3|r_1+ \xi_1\sin\theta|}{\epsilon}\Big\},$$ 
then
 \begin{equation*}
\Omega_{z}(t)  \supset \Big\{x: \,x\cdot\nu_z\geq  -(c_{*}-\epsilon)t \Big\}\,\hbox{ for all } t\geq T_2,  
\end{equation*}
and
\begin{equation*}
\liminf_{t\to\infty}\left[{\inf}_{x\cdot \nu_z \geq -(c_{*}-\epsilon) t} w_z(t,x)\right]\geq \frac{a}{b}.
\end{equation*}

On the other hand, by the choice of $w_0$ in \eqref{lowu0} and the property \eqref{initialcom}, we have 
$$\Omega_{z}(0)\subset  \Lambda_{z_1}  \subset \Omega(0), $$ 
and
$$w_0 (x\cdot\nu_z-r_1-\xi_1\sin\theta)\leq u_0(x) \,\hbox{ for } x\in \Omega_z(0).$$
Hence we can use Theorem \ref{thmcomp} to compare $u$ and $w_z$ and then obtain
$$w_z(t,x)\leq u(t,x) \,\hbox{ in }   [0,\infty)\times\R^N, $$ 
which clearly implies $\Omega_z(t) \subset \Omega(t)$ for $t\geq 0$.

Thus, we obtain
\begin{equation*}
 \Big\{x: \,x\cdot\nu_z\geq  -(c_{*}-\epsilon)t \Big\} \subset \Omega(t) \,\hbox{ for all } t\geq T_2,  
\end{equation*}
and 
\begin{equation*}
\liminf_{t\to\infty}\left[{\inf}_{x\cdot \nu_z \geq -(c_{*}-\epsilon) t} u(t,x)\right]\geq \frac{a}{b}.
\end{equation*}

Finally, by the arbitrariness of $z\in \partial \Lambda_{z_1}\backslash \{ z_1\}$, we obtain
\begin{equation*}
 \Lambda^\phi-\frac{c_*-\epsilon}{\sin\phi}t{e}_N\;= \bigcup_{z\in \partial \Lambda_{z_1}\backslash \{ z_1\} }\Big\{x: \,x\cdot\nu_z\geq  -(c_{*}-\epsilon)t \Big\}\; \subset\; \Omega(t) \,\hbox{ for all } t\geq T_2.  
\end{equation*}
The desired results then follow if we replace $\epsilon$ by $\tilde{\epsilon}: =\epsilon/ \sin\phi$.
\end{proof}

Next we prove 
\[
\Omega(t)\subset \Lambda^\phi-\left(\frac{c_*}{\sin\phi}+\epsilon\right)t\,{e}_N \mbox{ for all  large $t$}
\]
 by constructing a suitable weak supersolution to problem \eqref{eqfrfisher}. We do this with several lemmas.

\begin{lem}\label{lemupu0}
Let $u(t,x)$ and $\Omega(t)$ be given in the statement of Theorem \ref{spreadspeed}. Then for any $\delta>0$, there exist $t_2=t_2(\delta)>0$ and $r_2=r_2(\delta)>0$ such that
\begin{equation}\label{upu0}
u(t,x)\leq  \frac{a+\delta}{b-\delta} \,\hbox{ for }  t\geq t_2,\,x\in\R^N, 
\end{equation}
and 
\begin{equation}\label{conez3}
{\Omega(t_2)} \subset \Lambda_{z_2}:=\Lambda^\phi+\Big(\xi_2-\frac{r_2}{\sin\phi}\Big)e_N.
\end{equation}
\end{lem}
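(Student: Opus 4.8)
The plan is to prove \eqref{upu0} first, fix the resulting $t_2$, and only afterwards choose $r_2$ so that \eqref{conez3} holds at that time. For \eqref{upu0} I would compare $u$ from above with the spatially homogeneous logistic solution. Let $\bar u(t)$ solve $\bar u'=a\bar u-b\bar u^2$, $\bar u(0)=\|u_0\|_{L^\infty(\Omega_0)}$. Since $\bar u$ is independent of $x$, it is the weak (indeed classical) solution of \eqref{eqfrfisher} with initial domain $\R^N$ and initial datum $\bar u(0)$, so Theorem \ref{thmcomp} yields $u(t,x)\le\bar u(t)$ for all $t>0$, $x\in\R^N$; alternatively one may combine $u\le U$ from Theorem \ref{asymu} with the elementary bound $U\le\bar u(t)$. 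As $\bar u(t)\to a/b$ when $t\to\infty$, and $\frac{a+\delta}{b-\delta}>\frac ab$ for every $\delta\in(0,b)$, there is $t_2=t_2(\delta)>0$ with $\bar u(t)\le\frac{a+\delta}{b-\delta}$ for $t\ge t_2$, which is \eqref{upu0}. (In particular, any function $\hat u$ as below will also be bounded by $\bar u(t)$, hence by $\max\{\|u_0\|_{L^\infty(\Omega_0)},a/b\}$.)

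With $t_2$ now fixed, \eqref{conez3} is a finite-speed-of-propagation assertion at that single time, and I would first reduce it to a statement about a cone. Since $\phi\in(\pi/2,\pi)$, the identity $N[\Lambda^\phi,R]=\Lambda^\phi-\frac{R}{\sin\phi}e_N$ gives $\Lambda_{z_2}=N[\Lambda^\phi+\xi_2e_N,\,r_2]$; as $\Omega_0\subset\Lambda^\phi+\xi_2e_N$ by \eqref{outscone}, it therefore suffices to find $r_2=r_2(t_2)$ with $\Omega(t_2)\subset N[\Lambda^\phi+\xi_2e_N,\,r_2]$. Writing $K_R:=\R^N\setminus N[\Lambda^\phi+\xi_2e_N,R]$ (again a closed circular cone of the same aperture), this amounts to showing $\Omega(t_2)\cap\inter K_R=\emptyset$ once $R$ is large in terms of $t_2$. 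I would obtain this by comparison with the radially symmetric exterior-ball free boundary problem of the Appendix. Fix $R$ large and let $\bar x$ be any point with $\dist(\bar x,\Lambda^\phi+\xi_2e_N)>R$; then $\overline{B_R(\bar x)}$ is a compact set disjoint from the closed set $\overline{\Omega_0}$, so $\epsilon_0:=\dist(\overline{B_R(\bar x)},\overline{\Omega_0})>0$. Choosing a smooth, bounded, radially symmetric $\hat u_0$, positive on $\R^N\setminus\overline{B_R(\bar x)}$, vanishing on $\partial B_R(\bar x)$, with $\hat u_0\le\|u_0\|_{L^\infty(\Omega_0)}$ everywhere and $\hat u_0=\|u_0\|_{L^\infty(\Omega_0)}$ on $\{|x-\bar x|\ge R+\epsilon_0\}$ — so that $\hat u_0\ge u_0$ on $\Omega_0$, since $|x-\bar x|-R\ge\epsilon_0$ there — the Appendix provides a classical solution $(\hat u,\,\R^N\setminus\overline{B_{\hat\rho(t)}(\bar x)})$ with $\hat\rho(0)=R$ and $\hat\rho$ continuous and nonincreasing. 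Theorem \ref{thmcomp} (applied with $\Omega_0\subset\R^N\setminus\overline{B_R(\bar x)}$ and $u_0\le\hat u_0$) gives $u\le\hat u$, hence $\Omega(t)\subset\R^N\setminus\overline{B_{\hat\rho(t)}(\bar x)}$, and in particular $\bar x\notin\Omega(t)$ whenever $\hat\rho(t)>0$.

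It then remains to choose $R$, depending only on $t_2$ and not on $\bar x$, so that $\hat\rho(t_2)>0$. The inward free boundary speed is $-\hat\rho'(t)=\mu|\nabla\hat u|$ on the sphere $\partial B_{\hat\rho(t)}(\bar x)$; since $\hat u$ is uniformly bounded (by the logistic comparison above) and, as long as $\hat\rho(t)\ge1$, the sphere has curvature $\le1$, a barrier / parabolic boundary gradient estimate yields $-\hat\rho'(t)\le C$ on $\{\hat\rho(t)\ge1\}$ with $C$ independent of $R$ and $\bar x$. Hence $\hat\rho(t)\ge R-Ct$ for as long as $\hat\rho(t)\ge1$, so $\hat\rho(t_2)\ge1>0$ once $R>Ct_2+1$. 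For such an $R$ we get $\bar x\notin\Omega(t_2)$ for every $\bar x\in\inter K_R$, i.e.\ $\Omega(t_2)\cap\inter K_R=\emptyset$; since $K_{R+1}\subset\inter K_R$, this gives $\Omega(t_2)\subset\R^N\setminus K_{R+1}=N[\Lambda^\phi+\xi_2e_N,\,R+1]=\Lambda_{z_2}$, proving \eqref{conez3} with $r_2:=R+1$.

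The step I expect to be the main obstacle is the uniform-in-$R$ bound $-\hat\rho'(t)\le C$ on $\{\hat\rho(t)\ge1\}$: it is precisely where the quantitative radial analysis of the Appendix — or a careful flat-boundary parabolic gradient estimate exploiting that spheres of large radius are locally nearly flat — is needed. The logistic comparison behind \eqref{upu0} and the cone geometry behind the reduction of \eqref{conez3} are routine.
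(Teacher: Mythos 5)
Your argument follows essentially the same route as the paper: the uniform bound \eqref{upu0} comes from comparison with the spatially homogeneous logistic solution (the paper uses $u^*$ with $u^*(0)=\max\{a/b,\|u_0\|_{L^\infty}\}$ and the chain $u\le U\le u^*$ via Theorem \ref{asymu}, which is your second variant), and \eqref{conez3} comes from comparing $u$ from above, at every center $\bar x$ outside a shifted cone, with the radially symmetric exterior-ball free boundary solution of the Appendix, whose inner boundary shrinks at a rate bounded independently of the radius, so that the ball-shaped hole persists up to time $t_2$ once $R$ is large (this is exactly Proposition \ref{pexsym} and its conclusion $\hat h(t_2)\ge R_0/2$). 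The geometric reduction $\Lambda_{z_2}=N[\Lambda^\phi+\xi_2e_N,r_2]$ and the intersection over all admissible centers are also as in the paper.

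There is, however, one concrete uniformity slip in your construction of $\hat u_0$. You allow $\bar x$ to be any point with $\dist(\bar x,\Lambda^\phi+\xi_2e_N)>R$ and build $\hat u_0$ with a transition layer of width $\epsilon_0=\dist(\overline{B_R(\bar x)},\overline{\Omega_0})$; but $\epsilon_0$ can be arbitrarily small as $\bar x$ approaches distance $R$ from the cone, so $\|\hat u_0\|_{C^1}\sim \|u_0\|_\infty/\epsilon_0$ is not bounded uniformly in $\bar x$. The estimate you then invoke, $-\hat\rho'(t)\le C$ with $C$ independent of $R$ and $\bar x$, is precisely the Appendix's bound \eqref{sec5esti1}, and its constant (via $M\ge 2C_2/C_3$) depends on the $C^1$ bound $C_2$ of the initial profile; equivalently, the $R_0$ of Proposition \ref{pexsym} depends on $C_2$. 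So as written, the threshold $R$ is not independent of $\bar x$, which is exactly the uniformity you need. The fix is the one the paper uses: put the unit margin into the choice of centers rather than at the end, i.e.\ only compare against balls $B_{R}(\hat x_0)$ with $\hat x_0\notin \Lambda^\phi+\big(\xi_2-\frac{R+1}{\sin\phi}\big)e_N$, so that $|x-\hat x_0|-R>1$ for all $x\in\Omega_0$; then a single fixed radial profile $\hat v_0$ with $\hat v_0\ge\|u_0\|_{L^\infty}$ for $r\ge 1$ and $\|\hat v_0\|_{C^1}\le 3\|u_0\|_{L^\infty}$ dominates $u_0$ for every such center, Proposition \ref{pexsym} applies with constants $(t_2, 2\|u_0\|_\infty, 3\|u_0\|_\infty)$ independent of the center, and one concludes \eqref{conez3} with $r_2=R_0+1$. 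With this adjustment your proof coincides with the paper's.
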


\begin{proof}
Let $u^*(t)$ be the unique solution of the problem 
$$\frac{d u^*}{dt}=u^*(a-bu^*) \hbox{ for }  t>0; \quad u^*(0)=\max\Big\{ \frac{a}{b},\, \|u_0\|_{L^{\infty}(\Omega_0)} \Big\}.  $$
Clearly, we have
$$u^*(t)\geq \frac{a}{b} \hbox{ for all } t\geq 0\,\hbox{ and }\,\lim_{t\to\infty} u^*(t)=\frac{a}{b}.$$
Moreover, it follows from Theorem \ref{asymu} and the parabolic comparison principle that  
$$u(t,x)\leq U(t,x)\leq u^*(t)\,\hbox{ for } t\geq 0,\, x\in\R^N,  $$
where $U(t,x)$ is the unique solution of the Cauchy problem \eqref{cauchyp}. As a consequence, for any $\delta>0$, there exists $t_2=t_2(\delta)>0$ such that 
$$u(t,x)\leq u^*(t) \leq \frac{a+\delta}{b-\delta}\,\hbox{ for all } t\geq t_2,\,x\in\R^N, $$
which clearly gives \eqref{upu0}. 

It remains to prove \eqref{conez3}. 
Let $t_2>0$ be determined as above. It then follows from Proposition \ref{pexsym} in the Appendix 
below that there exists $R_0>1$ depending  on $t_2$ such that, for any given
radially symmetric function $\hat{v}_0\in C^2([0,\infty))$  satisfying 
\begin{equation}\label{hatv0}
0<\hat{v}_0(r) < 2\| u_0 \|_{  L^{\infty}(\Omega_0)} \,\hbox{ for } \,r\in (0,\infty),\quad  \hat{v}_0(0)=0,\quad \|\hat{v}_0\|_{C^1([0,\infty)}\leq 3\| u_0 \|_{  L^{\infty}(\Omega_0)},
\end{equation}
the following free boundary problem 
\begin{equation}\label{sec4symubd}
 \left \{ \begin{array}{ll} 
\displaystyle\medskip \hat{v}_t-d \Delta \hat{v}=a\hat{v}-b\hat{v}^2, \;\; & 0<t<t_2, \; \hat{h}(t)<r<\infty,\\
\displaystyle\medskip \hat{v}(t, \hat{h}(t))=0, \;\; &0<t<t_2,\\
\displaystyle\medskip \hat{h}'(t)=-\mu \hat{v}_r (t, \hat{h}(t)), \;\; &0<t<t_2,\\
\hat{h}(0)=R_0, \;\;\; \hat{v}(0, r)=\hat{v}_0 (r-R_0), \;\; &R_0 \leq r<\infty, 
\end{array} \right.
\end{equation}
admits a unique classical solution $(\hat{v},\hat{h})$ defined for $0<t\leq t_2$ with $\hat{v}(t,r)>0$, $\hat{h}'(t)<0$ for  $0<t\leq t_2,\,\hat{h}(t) < r<\infty$, and
\begin{equation}\label{freedt2}
\hat{h}(t_2)\geq R_0/2,
\end{equation}
where due to the radial symmetry, $\Delta \hat{v}=\hat{v}_{rr}+\frac{N-1}{r}\hat{v}_r$.

Set
\[
r_2:=R_0+1 \quad\hbox{and}\quad  \Lambda_{z_2}:=\Lambda^\phi+\Big(\xi_2-\frac{r_2}{\sin\phi}\Big)e_N.
\]
Clearly 
\[
B_{r_2}(\hat x_0)\cap (\Lambda^\phi+\xi_2e_N)=\emptyset \;\hbox{ for all } \hat x_0\in \R^N\setminus \Lambda_{z_2}.
\]
Since $\Omega_0\subset (\Lambda^\phi+\xi_2e_N)$, it is easily seen that $|x-\hat{x}_0|-R_0>1$
for all $x\in\Omega_0$, $\hat x_0\in\R^N\setminus \Lambda_{z_2}$. In view of this, we may require that, in addition to the constraint \eqref{hatv0}, $\hat{v}_0$ also satisfies 
\begin{equation}\label{sec5inicom}
\hat{v}_0(|x-\hat{x}_0|-R_0) \geq u_0(x) \hbox{ for } x\in\Omega_0,\; \hat x_0\in\R^N\setminus \Lambda_{z_2}.  
\end{equation}
Indeed, this can be ensured by requiring $\hat{v}_0$ to satisfy $\hat{v}_0(r) \geq \| u_0 \|_{  L^{\infty}(\Omega_0)}$ for $r\geq 1$.

We now define, for each   $\hat x_0\in \R^N\setminus \Lambda_{z_2}$,
$$\hat{V}(t,x)=\hat{v}(t,|x-\hat{x}_0|)\,\hbox{ for }\,|x-\hat{x}_0|\geq \hat{h}(t),$$ 
and extend it to zero for $|x-\hat{x}_0|< \hat{h}(t)$ ($0\leq t\leq t_2$), then it is easily seen that
 $\hat{V}$ is the unique weak solution of the free boundary problem induced from \eqref{sec4symubd} over $[0,t_2]\times \R^N$ with initial function $\hat{v}_0(|x-\hat{x}_0|-R_0)$. 
Furthermore, due to \eqref{sec5inicom}, we conclude from Theorem \ref{thmcomp} that 
$$u(t,x)\leq \hat{V}(t,x) \,\hbox{ in }\, [0,t_2]\times \R^N.$$
This together with \eqref{freedt2} clearly implies  
\begin{equation*}
\Omega(t_2) \subset\Big\{x:\, |x-\hat{x}_0|\geq \hat{h}(t_2) \Big\} \subset \Big\{x:\, |x-\hat{x}_0|\geq R_0/2 \Big\} \;\hbox{ for all } \hat x_0\in\R^N\setminus \Lambda_{z_2}. 
\end{equation*}
It follows that
\[
\Omega(t_2)\subset \bigcap_{\hat x_0\in\R^N\setminus \Lambda_{z_2}}\Big\{x:\, |x-\hat{x}_0|\geq R_0/2 \Big\}\subset \Lambda_{z_2}.
\]
The proof of Lemma \ref{lemupu0} is now complete.
 \end{proof}

We are now ready to construct a weak supersolution of problem \eqref{eqfrfisher}. For any given small $\delta>0$, denote
$$
c^\delta:=c_*(\mu,a+\delta,b-\delta,d),
$$ 
and denote by $Z^{\delta}(r)$ the solution of \eqref{eqwave} with $k$, $a$, $b$ replaced by $c^{\delta}$, $a+\delta$, $b-\delta$, respectively. For $R>0$, we define
\[
\xi_R(t):=\xi_2-\frac{R+r_2}{\sin\phi}-\left[(1-\delta)^{-2}\frac{c^\delta}{\sin\phi}\right]\,t \;\hbox{ for } t\geq 0,
\]
with $r_2$ given in Lemma \ref{lemupu0} depending on $\delta>0$, and
\[
\Omega_R(t):=\Lambda_R^\phi+\xi_R(t)e_N \;\hbox{ for } t\geq 0,
\]
with 
$$ \Lambda_R^\phi:=\big\{x\in\Lambda^\phi: d(x,\partial\Lambda^\phi)>R\big\}.$$
Then define
\[
\overline u(t,x)= \overline u_R(t,x):=u_R(x-\xi_R(t)e_N) \;\hbox{ for } t\geq0,\;x\in\R^N,
\]
with
\[
u_R(x):=\left\{\begin{array}{ll}
\medskip
(1-\delta)^{-2}Z^\delta(d(x,\partial\Lambda_R^\phi)), & x\in\Lambda_R^\phi,\\
0, & x\not\in \Lambda_R^\phi.
\end{array}
\right.
\]
We are going to show that for suitably chosen $R$, $\overline u_R(t,x)$ is a weak supersolution to the equation satisfied by
$u(t_2+t,x)$; the desired result then easily follows.

\begin{lem}\label{weaksuper}
Let $\overline{u}$ be given as above. Then there exists $R=R(\delta)$ sufficiently large such that  $\overline u$
is a weak supersolution of \eqref{eqfrfisher} with $u_0(x)$ replaced by $\overline u(0,x)$.
\end{lem}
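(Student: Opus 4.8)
The plan is to apply Theorem~\ref{super-sub}, taking $\Phi$ to be (a $C^1$ extension of) the negative of the signed distance to $\partial\Omega_R(t)$, so that $\Omega_R(t)=\{\Phi(t,\cdot)<0\}$; the initial condition in \eqref{sup} then holds with equality, since $u_0$ is replaced by $\overline u(0,\cdot)$. Everything rests on a geometric description of $\Lambda_R^\phi$. As $\phi\in(\pi/2,\pi)$, the set $C:=\R^N\setminus\overline{\Lambda^\phi}$ is an open convex cone with vertex $0$ and half‑angle $\theta=\pi-\phi$, and $N[\overline C,R]=\overline C\oplus\overline{B_R(0)}$ is convex; its boundary $\partial\Lambda_R^\phi=\partial N[\overline C,R]$ consists of two $C^\infty$ pieces meeting $C^1$‑ly — a lateral surface parallel to $\partial C$ at distance $R$, and a spherical cap of $\partial B_R(0)$ around the $e_N$ direction of angular radius $\pi/2-\theta$ — hence $\partial\Lambda_R^\phi$ is $C^{1,1}$. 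Consequently, for $x\in\Lambda_R^\phi$ the function $\rho_0(x):=d(x,\partial\Lambda_R^\phi)=d(x,N[\overline C,R])$ is convex and $C^{1,1}$, smooth off the hypersurface where the (unique) nearest boundary point lies on the cone–cap junction, with $|\nabla\rho_0|=1$ and $0\le\Delta\rho_0\le C(N,\phi)/R$ a.e. (all principal curvatures of $\partial\Lambda_R^\phi$ are $\lesssim 1/R$). Moreover $\nabla\rho_0\cdot e_N=\sin\phi$ at points whose nearest boundary point lies on the lateral surface, while $\nabla\rho_0\cdot e_N=\tfrac{x}{|x|}\cdot e_N\in[\sin\phi,1]$ where it lies on the cap; thus $\nabla\rho_0\cdot e_N\ge\sin\phi$ on $\Lambda_R^\phi$, and on $\partial\Lambda_R^\phi$ the outward normal $\nu$ of $\Lambda_R^\phi$ satisfies $\nu\cdot e_N=-\nabla\rho_0\cdot e_N\le-\sin\phi$. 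Since $\Omega_R(t)=\Lambda_R^\phi+\xi_R(t)e_N$ and $\overline u(t,x)=(1-\delta)^{-2}Z^\delta\big(\rho_0(x-\xi_R(t)e_N)\big)$, all of this transfers to $\partial\Omega_R(t)$.

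Granting this, I would first check the free boundary condition in \eqref{sup}. On $\partial\Omega_R(t)$ one has $\overline u=(1-\delta)^{-2}Z^\delta(0)=0$ and $\nabla_x\overline u=-(1-\delta)^{-2}(Z^\delta)'(0)\,\nu$, while $\nabla_x\Phi=\nu$ and $\Phi_t=-\xi_R'(t)\,(\nu\cdot e_N)=(1-\delta)^{-2}\tfrac{c^\delta}{\sin\phi}\,(\nu\cdot e_N)$. Using $\mu(Z^\delta)'(0)=c^\delta$ from Proposition~\ref{semiwave} (applied with $a,b$ replaced by $a+\delta,b-\delta$), we get $\mu\nabla_x\overline u\cdot\nabla_x\Phi=-(1-\delta)^{-2}c^\delta$, so the required inequality $\Phi_t\le\mu\nabla_x\overline u\cdot\nabla_x\Phi$ reduces precisely to $\nu\cdot e_N\le-\sin\phi$, which holds by the previous paragraph (with equality on the lateral part, where the exact semi‑wave relation is attained).

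Next comes the interior inequality $\overline u_t-d\Delta\overline u\ge a\overline u-b\overline u^2$ in $\Omega_R(t)$. Writing $\rho=\rho_0(x-\xi_R(t)e_N)$, one computes $\rho_t=-\xi_R'(t)(\nabla\rho_0\cdot e_N)=(1-\delta)^{-2}\tfrac{c^\delta}{\sin\phi}(\nabla\rho_0\cdot e_N)\ge(1-\delta)^{-2}c^\delta$ and $\Delta\rho=\Delta\rho_0\le C(N,\phi)/R$. Hence, choosing $R=R(\delta)$ so large that $dC(N,\phi)/R\le[(1-\delta)^{-2}-1]c^\delta$, we obtain $\rho_t-d\Delta\rho\ge c^\delta$. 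Since $|\nabla\rho|=1$ and $(Z^\delta)'\ge0$,
\[
\overline u_t-d\Delta\overline u=(1-\delta)^{-2}\big[(Z^\delta)'(\rho)(\rho_t-d\Delta\rho)-d(Z^\delta)''(\rho)\big]\ge(1-\delta)^{-2}\big[c^\delta(Z^\delta)'(\rho)-d(Z^\delta)''(\rho)\big],
\]
and by the equation \eqref{eqwave} satisfied by $Z^\delta$ the right‑hand side equals $(1-\delta)^{-2}\big[(a+\delta)Z^\delta(\rho)-(b-\delta)(Z^\delta)^2(\rho)\big]$. Finally, since $a+\delta\ge a$, $b-\delta\le b(1-\delta)^{-2}$ and $Z^\delta\ge0$, this is $\ge a(1-\delta)^{-2}Z^\delta(\rho)-b(1-\delta)^{-4}(Z^\delta)^2(\rho)=a\overline u-b\overline u^2$, as needed.

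The main obstacle is the geometric bookkeeping of the first paragraph — pinning down $\partial\Lambda_R^\phi$, the convexity of $N[\overline C,R]$ (hence of $\rho_0$), the curvature bound $\Delta\rho_0\le C(N,\phi)/R$, and the two angle inequalities — together with the key point that the $O(1/R)$ curvature correction is absorbed by the slack $[(1-\delta)^{-2}-1]c^\delta>0$ engineered into the (slightly too fast) propagation speed. One should also remark that $\overline u$ is only $C^{1,1}_{loc}$ across the null set where the nearest boundary point jumps between the lateral surface and the cap, so $\nabla_x^2\overline u$ is not continuous there; this is harmless, since the integration by parts in the proof of Theorem~\ref{super-sub} remains valid for such a $C^1\cap C^{1,1}_{loc}$ function, and hence $\overline u$ is indeed a weak supersolution of \eqref{eqfrfisher} with initial datum $\overline u(0,\cdot)$.
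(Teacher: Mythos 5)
Your proposal is correct and follows essentially the same route as the paper: the same supersolution $\overline u_R$, the same level-set function $\Phi$ (minus the distance to $\partial\Omega_R(t)$), verification of \eqref{sup} via $\mu(Z^\delta)'(0)=c^\delta$ together with the angle bound $\nabla\rho_0\cdot e_N\ge\sin\phi$, absorption of the $O(1/R)$ contribution of $\Delta\rho_0$ by the slack $[(1-\delta)^{-2}-1]c^\delta$, and an appeal to Theorem \ref{super-sub}; the only difference is that the paper carries out these computations with explicit formulas on the two pieces $\Lambda_{R,1}^\phi$ and $\Lambda_{R,2}^\phi$, whereas you use the curvature bound for the outer parallel body of the convex cone $\R^N\setminus\overline{\Lambda^\phi}$. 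Your closing remark that $\overline u$ is only $C^{1,1}$ across the set where the nearest boundary point switches between the cap and the lateral surface (so that the integration by parts behind Theorem \ref{super-sub} must be justified for such functions) addresses a regularity point the paper's proof glosses over, and is worth keeping.
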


\begin{proof}
Let us observe that $\partial\Lambda^\phi_R$ is smooth and it can be decomposed into two parts, a spherical part
\[
\Sigma_R^1:=\partial B_R(0)\cap \Lambda^{\phi-\frac\pi 2},
\]
and part of the surface of the cone $\Lambda^\phi+\frac{R}{\sin\theta} e_N$ (recall $\theta=\pi-\phi$):
\[
\Sigma_R^2:=\Big(\partial\Lambda^\phi+\frac{R}{\sin\theta} e_N\Big)\setminus \Lambda^{\phi-\frac\pi 2}.
\]
Correspondingly, we can decompose $\Lambda_R^\phi$ into two parts:
$$\Lambda_R^\phi: =  \Lambda_{R,1}^\phi \cup \Lambda_{R,2}^\phi \;\hbox{ with }\;  \Lambda_{R,1}^\phi :=\Lambda_R^\phi \cap \Lambda^{\phi-\frac\pi 2}\; \mbox{ and }\; \Lambda_{R,2}^\phi:=\Lambda_R^\phi 
\setminus \Lambda^{\phi-\frac\pi 2}$$
($\Lambda_{R,1}^\phi$ and $\Lambda_{R,2}^\phi$ are illustrated in Figure \ref{figsupersolu}). 
\begin{figure}[h]
\centering
\def\svgwidth{9cm}
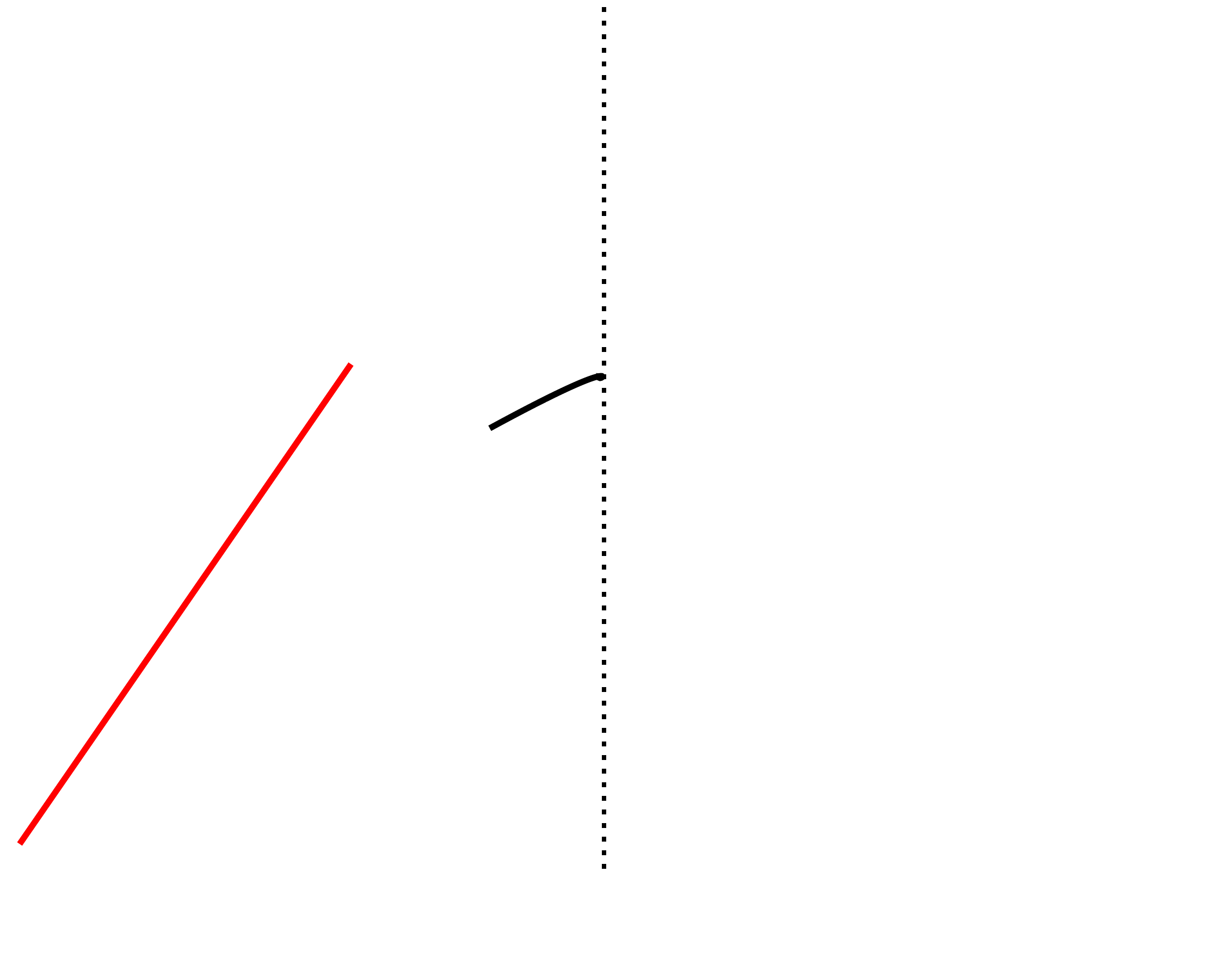
\caption{The domains $\Lambda_{R,1}^\phi$ and $\Lambda_{R,2}^\phi$ }\label{figsupersolu}
\end{figure}

In a similar way, for each $t\geq 0$, we can write
\[
\Omega_R(t) =\Omega_R^1(t)\cup \Omega_R^2(t),
\]
with 
$$
 \Omega_R^1(t):=\Lambda_{R,1}^\phi +\xi_R(t)  \; \mbox{ and }\; \Omega_R^2(t):=\Lambda_{R,2}^\phi +\xi_R(t).
$$

Clearly
\[
d(x,\partial\Lambda_R^\phi)=d(x, \Sigma_R^1)=|x|-R \mbox{ if } x\in \Lambda_{R,1}^\phi,
\]
and, by some simple geometrical calculations, for $x=(x', x_N)\in \Lambda_{R,2}^\phi$,
\[
d(x,\partial\Lambda_R^\phi)=d(x,\Sigma_R^2)=|x'|\cos\theta+x_N\sin\theta-R.
\]

It is straightforward to check that $\overline{u}$ and $\nabla_x \overline{u}$ are continuous in $\bigcup_{
t\geq 0} {\overline {{\Omega_R} (t)}}$,  that $\nabla_x^2 \overline{u}$, $\overline{u}_t$ are
continuous in $\bigcup_{t>0} \overline{\Omega_R (t)}$. 

Next, we show that for $R>0$ sufficiently large, 
\begin{equation}\label{checkmaineq}
\overline{u}_t-d\Delta\overline{u}\geq  a\overline{u}-b\overline{u}^2 \quad\hbox{for}\quad x\in{\Omega_R}(t),\,\,t>0.
\end{equation}
Denote $z:=x-\xi_R(t)e_N$; direct calculation shows that, for $x\in {\Omega}_R^1(t)$ and $t>0$, 
$$ 
\overline{u}_t= (1-\delta)^{-4} (Z^{\delta})'(z)\frac{z_N}{|z|}\frac{c^{\delta}}{\sin\phi},
$$
and 
$$d\Delta\overline{u}=(1-\delta)^{-2} \Big[ d(Z^{\delta})''(z)+\frac{d(N-1)}{|z|}(Z^{\delta})' (z) \Big].$$
Due to  $|z|>R$ and $z_N\geq |z|\sin\theta=|z|\sin\phi$ for $x\in {\Omega}_R^1(t)$ (i.e., $z\in \Lambda_{R,1}^\phi$), and $(Z^{\delta})'>0$, we have
\begin{equation*}
\left.\begin{array}{ll}
\displaystyle\medskip \overline{u}_t-d\Delta\overline{u}&\displaystyle \geq (1-\delta)^{-2}\Big[ (1-\delta)^{-2}c^{\delta}(Z^{\delta})'-\frac{d(N-1)}{|z|}(Z^{\delta})' - d(Z^{\delta})'' \Big]\\
\displaystyle\medskip&\displaystyle \geq (1-\delta)^{-2}\Big( \Big[(1-\delta)^{-2}c^{\delta}-\frac{d(N-1)}{R}\Big](Z^{\delta})'     - d(Z^{\delta})'' \Big)\\
\displaystyle\medskip &\displaystyle \geq (1-\delta)^{-2}\Big( c^{\delta}(Z^{\delta})' +\Big[\delta c^\delta-\frac{d(N-1)}{R}\Big](Z^{\delta})' - d(Z^{\delta})'' \Big).
\end{array}\right.
\end{equation*}
Therefore, if we choose 
\begin{equation}\label{chooser}
R\geq \frac{d(N-1)}{\delta c^\delta},
\end{equation}
 then for $x\in\Omega_R^1(t)$, 
\begin{equation*}
 \overline{u}_t-d\Delta\overline{u} \geq  (1-\delta)^{-2} \big[ c^{\delta}(Z^{\delta})' -  d(Z^{\delta})''\big]
 \geq (a+\delta)\overline{u}-(b-\delta)\overline{u}^2\geq a\overline{u}-b\overline{u}^2.
\end{equation*}

For $x\in {\Omega}_R^2(t)$ and $t>0$, it follows from a direct calculation that 
$$ \overline{u}_t= (1-\delta)^{-4}c^{\delta} (Z^{\delta})',$$
and 
$$d\Delta\overline{u}=(1-\delta)^{-2} \Big[ d(Z^{\delta})''+\frac{d(N-2)\cos\theta (Z^{\delta})' }{|x'|} \Big],$$
where $x':=(x_1,..., x_{N-1})\in \R^{N-1}$.

It is easily checked that for $z=(z', z_N)\in\Lambda_{R,2}^\phi$, we always have $|z'|\geq R\cos\theta$. Thus using $z=x-\xi_R(t)e_N\in\Lambda_{R,2}^\phi$,
we obtain
$$\frac{\cos\theta}{|x'|} \leq  \frac{1}{R}.$$
Thus for $R>0$ satisfying \eqref{chooser} and $x\in\Omega_R^2(t)$, we have  
\begin{equation*}
\left.\begin{array}{ll}
\displaystyle\medskip \overline{u}_t-d\Delta\overline{u}&\displaystyle \geq (1-\delta)^{-2}\Big[ (1-\delta)^{-2}c^{\delta}(Z^{\delta})'-\frac{d(N-2)}{R}(Z^{\delta})' - d(Z^{\delta})'' \Big]\\
\displaystyle\medskip &\displaystyle \geq (1-\delta)^{-2}\Big[ c^{\delta}(Z^{\delta})' +\Big(\delta c^\delta-\frac{d(N-2)}{R}\Big)(Z^{\delta})' - d(Z^{\delta})'' \Big]\\
 \displaystyle\medskip &\displaystyle \geq (1-\delta)^{-2}\Big[ c^{\delta}(Z^{\delta})'  - d(Z^{\delta})'' \Big]\\
&\geq  (a+\delta)\overline{u}-(b-\delta)\overline{u}^2,
\end{array}\right.
\end{equation*}
and hence, 
$$\overline{u}_t-d\Delta\overline{u} \geq   a\overline{u}-b\overline{u}^2.$$
We have thus proved that \eqref{checkmaineq} holds for all $R$ satisfying \eqref{chooser}. We henceforth fix such an $R$.

We next define
\[
\Phi(t,x)=R-d(x,\partial\Omega_R(t))=\left\{\begin{array}{ll}
\medskip R-|x-\xi_R(t)e_N|, & x\in\Omega_R^1(t),\smallskip\\
R-\big[|x'|\cos\theta+(x_N-\xi_R(t))\sin\theta\big], & x\in\Omega_R^2(t).
\end{array}
\right.
\]
Clearly, $\Phi$ is smooth, $\Omega_R(t)=\{x: \Phi(t,x)<0\}$ and $|\nabla_x\Phi|\not=0$ for $x\in\partial\Omega_R(t)$. We next show that
\begin{equation}\label{fbdy}
\Phi_t\leq \mu \nabla_x \overline{u}\cdot\nabla_x\Phi \quad \hbox{for} \quad  x\in \partial\Omega_R(t),\,\,t>0.
\end{equation}
 It is straightforward to calculate that, for $x\in 
\partial\Omega_R(t)$ and $t>0$, 
\begin{equation*}
\nabla_x \Phi \cdot \nabla_x \overline{u}= -(1-\delta)^{-2}(Z^{\delta})'(0),  
\end{equation*}
and
\begin{equation*}
\Phi_t(t,x) =\left\{ 
\begin{array}{ll}
\displaystyle\medskip -(1-\delta)^{-2}\frac{x_N-\xi_R(t)}{|x-\xi_R(t)e_N|}\frac{c^{\delta}}{\sin\theta}  &\displaystyle\medskip  \hbox{ if } x\in\Omega_R^1(t), \\
 -(1-\delta)^{-2}c^{\delta} &  \hbox{ if } x\in\Omega_R^2(t).
\end{array}\right.
\end{equation*}
On the other hand, it is easily seen that for any $z\in \Lambda_{R,1}^\phi$, 
$
{z_N}\geq |z| \sin\theta$.
It then follows that
\[
\frac{x_N-\xi_R(t)}{|x-\xi_R(t)e_N|}\geq \sin\theta \;\;\mbox{ for } x\in\Omega_R^1(t),
\]
and hence 
$$\Phi_t(t,x) \leq  -(1-\delta)^{-2}c^{\delta} \mbox{  for } x\in\Omega_R(t). $$
From this and $\mu(Z^{\delta})'(0)=c^{\delta}$, we deduce \eqref{fbdy}.
We may now apply Theorem \ref{super-sub} to conclude that $\overline u$ is a weak supersolution of \eqref{eqfrfisher} with $u_0(x)$ replaced by 
$\overline u(0,x)$.
\end{proof}
 
\begin{lem}\label{upbound}
Let $u(t,x)$ and $\Omega(t)$ be given in the statement of Theorem \ref{spreadspeed}. Then for any $\epsilon>0$, there exists $T_3=T_3(\epsilon)>0$ such that
$$ {\Omega(t)}  \subset \Lambda^\phi-\left(\frac{c_*}{\sin\phi}+\epsilon\right)t\,{e}_N   \,\hbox{ for all }  t\geq T_3.$$
\end{lem}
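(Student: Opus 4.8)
The plan is to dominate $u$ on $[t_2,\infty)$ by the weak supersolution $\overline u=\overline u_R$ from Lemma~\ref{weaksuper}, and then to read off the claimed inclusion from the position of the cone $\Omega_R(t)$. Given $\epsilon>0$, first fix $\delta>0$ so small that, with $c^\delta:=c_*(\mu,a+\delta,b-\delta,d)$,
\[
(1-\delta)^{-2}c^{\delta}\ \le\ c_*+\frac{\epsilon}{2}\sin\phi ,
\]
which is possible since $c_*(\mu,\cdot,\cdot,d)$ is continuous (Proposition~\ref{semiwave}) and $(1-\delta)^{-2}\to1$. For this $\delta$, Lemma~\ref{lemupu0} provides $t_2=t_2(\delta)$ and $r_2=r_2(\delta)=R_0+1$ with $u(t,\cdot)\le\frac{a+\delta}{b-\delta}$ for $t\ge t_2$ and $\Omega(t_2)\subset\Lambda_{z_2}$, and Lemma~\ref{weaksuper} provides $R=R(\delta)$ for which $\overline u_R$ is a weak supersolution of \eqref{eqfrfisher} with initial function $\overline u_R(0,\cdot)$. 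Since the radius $R_0$ furnished by Proposition~\ref{pexsym} may be taken as large as we please, I also require it large enough that $Z^\delta(R_0/2)\ge(1-\delta)^2\frac{a+\delta}{b-\delta}$ --- possible because $Z^\delta$ is increasing with $Z^\delta(r)\uparrow\frac{a+\delta}{b-\delta}$.

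The main step is to invoke the comparison principle Theorem~\ref{thmcomp} with the weak subsolution $v(t,x):=u(t_2+t,x)$ --- which, by time translation and uniqueness of weak solutions (Theorem~\ref{unique}), is again a weak solution of \eqref{eqfrfisher}, now with data $(\Omega(t_2),u(t_2,\cdot))$ --- and the weak supersolution $\overline u_R(t,x)$ with data $(\Omega_R(0),\overline u_R(0,\cdot))$; here $g$ and $\mu$ coincide. One must verify $\Omega(t_2)\subset\Omega_R(0)$ and $u(t_2,\cdot)\le\overline u_R(0,\cdot)$ on $\Omega(t_2)$. For the first, elementary geometry of cones gives $\Lambda^\phi+\frac{R}{\sin\phi}e_N\subset\Lambda_R^\phi$ (the inner parallel set of $\Lambda^\phi$ at distance $R$ contains the parallel cone $\Lambda^\phi+\frac R{\sin\phi}e_N$), hence, translating by $\xi_R(0)e_N$, $\Omega_R(0)\supset\Lambda^\phi+(\xi_2-\frac{r_2}{\sin\phi})e_N=\Lambda_{z_2}\supset\Omega(t_2)$. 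For the second, the proof of Lemma~\ref{lemupu0} in fact shows that $\Omega(t_2)$ avoids the $R_0/2$–neighbourhood of $\R^N\setminus\Lambda_{z_2}$, so that for $x\in\Omega(t_2)$ one has $d(x-\xi_R(0)e_N,\partial\Lambda_R^\phi)=d(x,\partial\Omega_R(0))\ge d(x,\partial\Lambda_{z_2})\ge R_0/2$, whence $\overline u_R(0,x)=(1-\delta)^{-2}Z^\delta\!\big(d(x-\xi_R(0)e_N,\partial\Lambda_R^\phi)\big)\ge(1-\delta)^{-2}Z^\delta(R_0/2)\ge\frac{a+\delta}{b-\delta}\ge u(t_2,x)$ by the choice of $R_0$. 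Theorem~\ref{thmcomp} then yields $u(t_2+t,\cdot)\le\overline u_R(t,\cdot)$ a.e., and therefore $\Omega(t_2+t)\subset\Omega_R(t)$ for every $t\ge0$.

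It remains to turn this into the stated inclusion. Since $\Lambda_R^\phi\subset\Lambda^\phi$,
\[
\Omega(t_2+t)\ \subset\ \Omega_R(t)=\Lambda_R^\phi+\xi_R(t)e_N\ \subset\ \Lambda^\phi+\xi_R(t)e_N,\qquad
\xi_R(t)=\xi_2-\frac{R+r_2}{\sin\phi}-\frac{(1-\delta)^{-2}c^\delta}{\sin\phi}\,t .
\]
Writing $\tau=t_2+t$ and using that for $\phi\in(\pi/2,\pi)$ one has $\Lambda^\phi+c_1e_N\subset\Lambda^\phi+c_2e_N$ iff $c_1\ge c_2$, the desired $\Omega(\tau)\subset\Lambda^\phi-(\frac{c_*}{\sin\phi}+\epsilon)\tau\,e_N$ follows once $\xi_R(\tau-t_2)\ge-(\frac{c_*}{\sin\phi}+\epsilon)\tau$, i.e.
\[
\Big[\frac{(1-\delta)^{-2}c^\delta-c_*}{\sin\phi}-\epsilon\Big]\tau\ \le\ \xi_2-\frac{R+r_2}{\sin\phi}+\frac{(1-\delta)^{-2}c^\delta}{\sin\phi}\,t_2 ,
\]
whose left-hand coefficient is $\le-\frac{\epsilon}{2}<0$ by the choice of $\delta$; hence the inequality holds for all $\tau$ beyond some $T_3=T_3(\epsilon)$ (which may be taken $\ge t_2$), as required. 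The one delicate point in this scheme is the initial comparison $u(t_2,\cdot)\le\overline u_R(0,\cdot)$: because $\partial\Omega_R(0)$ coincides with $\partial\Lambda_{z_2}$ along its conical part there is no slack there, so one genuinely has to exploit the quantitative clearance $d(\Omega(t_2),\partial\Lambda_{z_2})\ge R_0/2$ coming from the exterior radial barriers in Lemma~\ref{lemupu0}, together with the freedom to enlarge $R_0$.
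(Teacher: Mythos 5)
Your argument is correct, and its overall architecture is the same as the paper's: dominate $u(t_2+\cdot,\cdot)$ by the travelling supersolution $\overline u_R$ of Lemma~\ref{weaksuper} via Theorem~\ref{thmcomp}, use Lemma~\ref{lemupu0} to order the initial domains, and then convert $\Omega(t_2+t)\subset\Lambda^\phi+\xi_R(t)e_N$ into the stated rate by choosing $\delta$ with $(1-\delta)^{-2}c^\delta\le c_*+\frac{\epsilon}{2}\sin\phi$. The only genuine divergence is at the delicate step you correctly isolated, the initial ordering $u(t_2,\cdot)\le\overline u_R(0,\cdot)$. The paper secures it without touching $R_0$: it picks $r_3$ from Proposition~\ref{semiwave} with $Z^\delta(r)\ge(1-\delta)\frac{a+\delta}{b-\delta}$ for $r\ge r_3$ and compares $u(t_2,\cdot)$ with the further shifted supersolution $\overline u(\,\cdot\,,\cdot+\tilde r_3 e_N)$, $\tilde r_3=r_3/\sin\theta$, so that every point of $\Lambda_{z_2}$ is at distance at least $r_3$ from the shifted free boundary; the extra shift is then absorbed into the constant $M$ in the final algebra. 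You instead enlarge $R_0$ so that $Z^\delta(R_0/2)\ge(1-\delta)^2\frac{a+\delta}{b-\delta}$ and exploit the clearance $d(x,\partial\Lambda_{z_2})\ge R_0/2$ for $x\in\Omega(t_2)$. Both devices work, but note that your version leans on facts that are only in the proofs, not the statements, of the cited results: the statement of Lemma~\ref{lemupu0} gives only $\Omega(t_2)\subset\Lambda_{z_2}$ (the $R_0/2$-clearance is the intermediate inclusion $\Omega(t_2)\subset\bigcap_{\hat x_0\notin\Lambda_{z_2}}\{|x-\hat x_0|\ge R_0/2\}$ inside its proof), and the statement of Proposition~\ref{pexsym} asserts existence of one admissible $R_0$ rather than of all sufficiently large ones (though its proof plainly yields the latter). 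So if you wanted a self-contained write-up you should either restate those two results in the strengthened form you use, or adopt the paper's translation-by-$\tilde r_3$ trick, which needs only the statements plus the one-line distance computation $d(x+\tilde r_3 e_N,\partial\Omega_R(0))\ge r_3$. All remaining steps of yours (the cone geometry $\Lambda^\phi+\frac{R}{\sin\phi}e_N\subset\Lambda_R^\phi$, the monotonicity of cone shifts along $e_N$, and the choice of $T_3\ge t_2$) are sound and match the paper.
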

\begin{proof}
For any small $\delta>0$, let $t_2=t_2(\delta)$ and $R=R(\delta)$ be given in Lemma \ref{lemupu0} and Lemma \ref{weaksuper}, respectively. By Proposition \ref{semiwave}, there exists $r_3=r_3(\delta)>0$ such that 
\begin{equation}\label{waveupu0}
Z^{\delta}(r) \geq (1-\delta)\frac{a+\delta}{b-\delta}\; \hbox{ for all } r\geq r_3.  
\end{equation}

We first claim that
\begin{equation}\label{inidomcom}
\Omega(t_2) \subset   \Omega_R(0) 
\end{equation}
and
\begin{equation}\label{inicom}
u(t_2,x) \leq \overline{u}(0,x+\tilde r_3 e_N)\quad \hbox{for } \, x\in \Omega(t_2),
\end{equation}
where  
\[
\tilde r_3:=\frac{r_3}{\sin\theta}.
\] 
Indeed, it is easily seen from the definition that
\[
\Omega_R(0)\supset \Lambda_{z_2}=\Lambda^\phi+\Big(\xi_2-\frac{r_2}{\sin\phi}\Big)e_N.
\]
Thus,  \eqref{inidomcom} is a consequence of $\Omega(t_2)\subset \Lambda_{z_2}$ proved in Lemma \ref{lemupu0}.

 We now prove \eqref{inicom}. 
For any $x\in  \Omega(t_2)$, due to $\Omega(t_2)\subset \Lambda_{z_2}\subset \Omega_R(0)$, we obtain
\[\begin{array}{rl}
\displaystyle d\Big(x+\frac{r_3}{\sin\theta}e_N,\; \partial \Omega_R(0)\Big)\geq& \displaystyle d\Big(x+\frac{r_3}{\sin\theta}e_N,\; \partial \Lambda_{z_2}\Big)\medskip\\
=&\displaystyle d\Big(x,\;  \partial \Lambda_{z_2}-\frac{r_3}{\sin\theta}e_N\Big)\medskip\\
\geq &\displaystyle r_3+d(x,\;\partial\Lambda_{z_2})\geq r_3.
\end{array}
\]
Thus, for $x\in\Omega(t_2)$, due to $(Z^{\delta})'(r)>0$ in $(0,\infty)$,  we have 
\begin{equation*}
\left.\begin{array}{ll}
\displaystyle\medskip \overline{u}(0,x+\tilde r_3e_N)\!\!\! &\displaystyle\medskip= (1-\delta)^{-2}Z^{\delta}\Big(d\Big(x+\frac{r_3}{\sin\theta}e_N-\xi_{R}(0)e_N,\; \partial\Lambda_R^\phi\Big)\Big)\\
&\displaystyle\medskip= (1-\delta)^{-2}Z^{\delta}\Big(d\Big(x+\frac{r_3}{\sin\theta}e_N,\;\partial\Omega_R(0)\Big)\Big)\\
&\geq (1-\delta)^{-2}Z^\delta(r_3).
\end{array}\right.
\end{equation*} 
This together with \eqref{upu0} and \eqref{waveupu0}  implies that 
$$\overline{u}(0,x+\tilde r_3 e_N) \geq  (1-\delta)^{-1}\frac{a+\delta}{b-\delta}\geq u(t_2,x) \;\hbox{ for } x\in\Omega(t_2),$$
which proves \eqref{inicom}. 

By Lemma \ref{weaksuper},
$\overline{u}(t,x+\tilde r_3 e_N)$ is a weak supersolution of problem \eqref{eqfrfisher} with $u_0$ replaced by $\overline{u}(0,x+\tilde r_3 e_N)$, and since $u(t+t_2,x)$ is a weak solution of \eqref{eqfrfisher} with $u_0$ replaced by  $u(t_2,x)$, it follows from \eqref{inidomcom}, \eqref{inicom} and Theorem \ref{thmcomp} that 
$$u(t+t_2,x)\leq \overline{u}(t,x+\tilde r_3 e_N)\,\hbox{ in } [0,\infty)\times \R^N,$$
and hence,
$$
\Omega(t) \subset \Omega_R(t-t_2)-\tilde r_3 e_N=\Lambda^\phi_R+\left[ \xi_R(t-t_2)-\tilde r_3\right] e_N  \,\hbox{ for } \,t\geq t_2. 
$$
Since $\Lambda_R^\phi\subset \Lambda^\phi$,
we thus obtain
\[
\Omega(t)\subset \Lambda^\phi+\left[ \xi_R(t-t_2)-\tilde r_3\right] e_N  \,\hbox{ for } \,t\geq t_2. 
\]
Clearly,
\[
\xi_R(t-t_2)-\tilde r_3=M-(1-\delta)^{-2}\frac{c^\delta}{\sin\theta}\, t,
\]
with
\[M=M_\delta:=\xi_2-\frac{R+r_2+r_3}{\sin\theta}+(1-\delta)^{-2}\frac{c^\delta}{\sin\theta}\, t_2.
\]
 Since 
 \[
 \lim_{\delta\to 0} (1-\delta)^{-2}c^{\delta}=c_*,
 \]
  for any small $\epsilon>0$, we can find some $\delta'_\epsilon\in (0,\epsilon)$ such that 
\begin{equation*}
-(1-\delta'_{\epsilon})^{-2}\frac{c^{\delta'_{\epsilon}}}{\sin\theta}\geq - \frac{ c_* }{\sin\theta}-\frac\epsilon 2.   
\end{equation*}
We now fix $\delta=\delta'_{\epsilon}$ and obtain
\begin{equation*}
\xi_R(t-t_2)-\tilde r_3 \geq   M-\left(\frac{c_*}{\sin\theta}+\frac{\epsilon}{2}\right)t 
=   M+\frac{\epsilon}{2}t -\left(\frac{c_*}{\sin\theta}+{\epsilon}\right)t \geq -\left(\frac{c_*}{\sin\theta}+{\epsilon}\right)t
\end{equation*}
for $t\geq T_3$ with
\[
T_3=T_3(\epsilon):
= \max\Big\{t_2, \, \frac{2}{\epsilon} |M| \Big\}. 
\]
Therefore,
\[
\Omega(t)\subset \Lambda^\phi -\left(\frac{c_*}{\sin\theta}+{\epsilon}\right)t \,e_N \mbox{  for $t\geq T_3$,}
\]
as desired. 
\end{proof}

It is easily seen that \eqref{Omega(t)} in Theorem \ref{spreadspeed} follows from Lemmas \ref{lowbound} and \ref{upbound}, while \eqref{outconebehu} is a direct consequence of \eqref{infubound} and \eqref{upu0}. Thus,  Theorem \ref{spreadspeed} is now proved.

Theorem \ref{spreadspeed} implies that if $\Omega_0$ satisfies \eqref{outscone} with $\phi\in (\pi/2, \pi)$, then for all large time, the  free boundary $\partial\Omega(t)$ propagates to infinity in the negative $x_N$-direction with speed $c_*/\sin\phi$. Moreover, given any direction
$\nu\in\mathbb S^{N-1}$ pointing outward of $\Lambda^\phi$, if we denote 
\[
\psi:=\arccos \left[(-e_N)\cdot\nu\right] \mbox{ (and so $\psi\in (0,\theta)=(0,\pi-\phi)$)},
\]
then the spreading of $\Omega(t)$ in the direction $\nu$ is roughly at speed $c_*/(\sin\phi\cos\psi)$.
In sharp contrast, we will show in
 the following theorem,  that when $\Omega_0$ satisfies \eqref{outscone} with $\phi\in (0, \pi/2)$, the spreading of $\Omega(t)$ 
in a set of directions $\nu\in\mathbb S^{N-1}$ pointing outward of $\Lambda^\phi$, including $\nu=-e_N$, is roughly
at the speed $c_*$. (This set of directions $\nu$ is given by $\Sigma_\phi$ below.)

\begin{thm}\label{spreadspeed1}
Let $u(t,x)$ be the unique weak solution of problem \eqref{eqfrfisher} with $\Omega_0$ satisfying \eqref{outscone} for some $\phi
\in (0,\pi/2)$, and $u_0$ satisfying \eqref{assumeu0} and \eqref{addau0}.
Denote $\Omega(t)=\big\{x:\, u(t,x)>0\big\}$. Then for any $\epsilon>0$, there exists $\tilde{T}=\tilde{T}(\epsilon)>0$ such that
\begin{equation}\label{inconeset}
N\big[\Lambda^\phi, (c_*-\epsilon)t\big]\, \subset  \,{\Omega(t)}\, \subset \,  N\big[\Lambda^\phi, (c_*+\epsilon)t\big] \,\hbox{ for all }  t\geq \tilde{T}.
\end{equation}
Moreover, we have
\begin{equation}\label{inconebehu}
\lim_{t\to\infty}\left[{\sup}_{x \in N[\Lambda^\phi, (c_*-\epsilon)t]} \Big|u(t,x)-\frac{a}{b} \Big|\right]=0.
\end{equation}
\end{thm}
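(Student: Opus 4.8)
The plan is to mirror the architecture of the proof of Theorem~\ref{spreadspeed}, but replace the planar (one–dimensional) comparison functions used there by radially symmetric ones: for $\phi\in(0,\pi/2)$ the cone $\Lambda^\phi$ is convex, so no half-space fits inside $\Omega_0$ and the long–time profile is no longer a moving cone. For the lower inclusion in \eqref{inconeset} I would use expanding-ball subsolutions. By \eqref{addau0} there are $r_1,\sigma_0>0$ with $u_0\ge\sigma_0$ on $\{x\in\Omega_0:\ d(x,\partial\Omega_0)\ge r_1\}$, and by \eqref{outscone} this set contains a translated cone $\Lambda^\phi+c\,e_N$ for some fixed $c\in\R$. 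Fix once and for all a radial $C^2$ datum $v_0$ with $0<v_0\le\sigma_0$, $v_0(0)=v_0(R^*)=0$ ($R^*$ as in \eqref{radius}), radially nonincreasing. For every admissible centre $x_0$ (meaning $d(x_0,\partial\Omega_0)\ge r_1+R^*$, which holds throughout a translated cone) the associated radial free boundary problem on $B_{k(t)}(x_0)$ has a classical solution, and since this problem does not depend on $x_0$, the spreading results for radial Fisher–KPP free boundary problems (cf.\ \cite{DG1,DG2}, or a semi-wave comparison as in Lemma~\ref{eslowra}) give $T(\epsilon)$ with $k(t)\ge(c_*-\epsilon)t$ for $t\ge T(\epsilon)$ and $\sup_{|x-x_0|\le(c_*-\epsilon)t}|v(t,|x-x_0|)-a/b|\to0$. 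Theorem~\ref{thmcomp} yields $B_{k(t)}(x_0)\subset\Omega(t)$ for each such $x_0$, so
\[
N\big[\Lambda^\phi,(c_*-\epsilon)t-|c|\big]\subset N\big[\Lambda^\phi+c\,e_N,(c_*-\epsilon)t\big]=\bigcup_{x_0}B_{(c_*-\epsilon)t}(x_0)\subset\Omega(t),
\]
which, after relabelling $\epsilon$, is the lower inclusion; combined with the uniform radial convergence and with $u\le U\le a/b+o(1)$ (as in Lemma~\ref{lemupu0}) it gives the lower half of \eqref{inconebehu}, the upper half being immediate from $u\le U$.

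The upper inclusion in \eqref{inconeset} is the crux, and it requires a supersolution supported on a \emph{genuine growing neighbourhood} of $\Lambda^\phi$: the supersolution of Lemma~\ref{weaksuper}, supported on a moving cone, would only give $\Omega(t)\subset N[\Lambda^\phi,(c_*/\sin\phi+\epsilon)t]$, with the wrong speed $c_*/\sin\phi>c_*$. Fix small $\delta>0$, let $c^\delta:=c_*(\mu,a+\delta,b-\delta,d)$, let $Z^\delta$ be the associated semi-wave (Proposition~\ref{semiwave}), set $R(t):=R_0+(1-\delta)^{-2}c^\delta t$, $\Omega_*(t):=N[\Lambda^\phi,R(t)]$, and
\[
\overline u(t,x):=(1-\delta)^{-2}Z^\delta\big(d(x,\partial\Omega_*(t))\big)\ \text{on}\ \Omega_*(t),\qquad \overline u:=0\ \text{elsewhere}.
\]
Since $\overline{\Lambda^\phi}$ is convex, $\Omega_*(t)$ is convex and $\delta_*(t,x):=d(x,\partial\Omega_*(t))=R(t)-g(x)$, where $g$ is the signed distance to $\partial\Lambda^\phi$ (so $g\le0$ on $\Lambda^\phi$, $g$ is convex); hence $|\nabla_x\delta_*|=1$, $\partial_t\delta_*=R'(t)$, and $-\Delta\delta_*\ge0$ a.e.\ in $\Omega_*(t)$. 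Using the ODE for $Z^\delta$ and $(1-\delta)^{-2}\ge1$, where $\delta_*$ is smooth the inequality $\overline u_t-d\Delta\overline u\ge a\overline u-b\overline u^2$ reduces to $R'(t)-d\Delta\delta_*\ge c^\delta$, which holds because $R'(t)\ge c^\delta$ and $-\Delta\delta_*\ge0$. The free boundary condition $\Phi_t\le\mu\nabla_x\overline u\cdot\nabla_x\Phi$ on $\partial\Omega_*(t)$, with $\Phi=g(x)-R(t)$, becomes $R'(t)\ge(1-\delta)^{-2}\mu(Z^\delta)'(0)=(1-\delta)^{-2}c^\delta$, which holds by the choice of $R(t)$. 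On the (measure-zero) singular set of $\overline u$ — essentially the positive $e_N$-axis (the medial axis of $\Omega_*$) and the cone vertex — $\overline u$ has \emph{concave} corners (as $g$ is convex and $Z^\delta$ increasing, $\overline u$ is locally a minimum of smooth supersolutions), so $-\Delta\overline u$ carries only nonnegative singular parts and $\overline u$ is a genuine weak supersolution of \eqref{eqfrfisher} in the sense of Definition~\ref{defweak}, even though Theorem~\ref{super-sub} does not apply verbatim. Finally, as in Lemmas~\ref{lemupu0}–\ref{upbound}, choose $t_2$ with $u(t_2,\cdot)\le\frac{a+\delta}{b-\delta}$ and $\Omega(t_2)$ inside a fixed translated cone, then pick $R_0$ large enough that $\Omega_*(0)\supset\Omega(t_2)$ and $\delta_*(0,\cdot)\ge r_3$ on $\Omega(t_2)$, where $Z^\delta(r)\ge(1-\delta)\frac{a+\delta}{b-\delta}$ for $r\ge r_3$; then $\overline u(0,\cdot)\ge u(t_2,\cdot)$, so by Theorem~\ref{thmcomp} $\Omega(t_2+s)\subset\Omega_*(s)=N[\Lambda^\phi,R(s)]$, and since $(1-\delta)^{-2}c^\delta\to c_*$ as $\delta\to0$, taking $\delta$ small and $s$ large gives $\Omega(t)\subset N[\Lambda^\phi,(c_*+\epsilon)t]$. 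Then \eqref{inconebehu} follows by combining both inclusions with $u\le a/b+o(1)$.

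The main obstacle is this upper-bound supersolution. One has to correctly identify the distance function $d(\cdot,\partial N[\Lambda^\phi,R])$ and its Laplacian for the fattened convex cone — whose boundary is the $C^{1,1}$ union $S_R\cup C_R$ of a spherical cap and a shifted conical surface, and whose medial axis is the positive $e_N$-axis — and then justify, within the weak-solution framework of Section~2 (where Theorem~\ref{super-sub} presupposes $C^2$ regularity), that $\overline u$ remains a weak supersolution across its singular set; the key point making this work is that the singularity is a \emph{concave} corner. A secondary technical nuisance is arranging the initial comparison $u(t_2,\cdot)\le\overline u(0,\cdot)$ despite the support $\Omega(t_2)$ being unbounded, which forces one to take $R_0$ large and to exploit the margin estimate $Z^\delta(r)\ge(1-\delta)\frac{a+\delta}{b-\delta}$ for $r\ge r_3$.
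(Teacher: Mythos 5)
Your lower-bound half (expanding radial subsolutions of the type \eqref{eqsymball} centred at every admissible point of a translated cone, the covering of $N[\Lambda^\phi,(c_*-\epsilon)t]$ by the resulting balls, and $u\le U\le a/b+o(1)$ for the upper part of \eqref{inconebehu}) is essentially identical to the paper's Step 1. For the upper inclusion you take a genuinely different route. The paper never fattens the cone: for each direction $\nu$ in the polar set $\Sigma_\phi=\{\nu\in\mathbb S^{N-1}:\arccos(\nu\cdot e_N)\ge\phi+\tfrac\pi2\}$ it compares $u$ with the planar function $\tilde w(t,x\cdot\nu)$ built from the one-dimensional free boundary problem \eqref{c2eqlow} on a half-line, whose front satisfies $\tilde\rho(t)\le(c_*+\epsilon/2)t$ by the 1-D estimates of \cite{DL}; intersecting the resulting half-space bounds over $\nu\in\Sigma_\phi$ and using that $N[\Lambda^\phi,R]$ is exactly the intersection of the half-spaces $\{x\cdot\nu\le R\}$, $\nu\in\Sigma_\phi$ (the paper's displayed union is evidently a misprint for this intersection), gives the upper inclusion with every comparison function smooth, so no regularity question ever arises. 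You instead build the single supersolution $(1-\delta)^{-2}Z^\delta(R(t)-g)$ on the moving neighbourhood $N[\Lambda^\phi,R(t)]$, with $g$ the signed distance to the convex cone. This works: the identity $d(x,\partial N[\Lambda^\phi,R])=R-g(x)$, $|\nabla g|=1$ a.e. and $\Delta g\ge0$ (convexity) are correct, the curvature terms now have the favourable sign (so, unlike in Lemma \ref{weaksuper}, no largeness of $R_0$ is needed for the differential inequality, only for the initial comparison), and the free-boundary condition reduces to $R'(t)\ge(1-\delta)^{-2}c^\delta=\mu(1-\delta)^{-2}(Z^\delta)'(0)$ as you say. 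What your route buys is one explicit supersolution; what it costs is exactly the point you flag: Theorem \ref{super-sub} does not cover the Lipschitz ridge along the positive $x_N$-axis, so you must verify the weak inequality of Definition \ref{defweak} there. Your reason is the right one — near the axis $\overline u$ is the minimum over $\omega\in\mathbb S^{N-2}$ of the smooth planar branches $(1-\delta)^{-2}Z^\delta\big(R(t)+x_N\sin\phi-(x'\cdot\omega)\cos\phi\big)$, each a classical supersolution — and it can be made rigorous either by redoing the integration by parts of Theorem \ref{super-sub} piecewise (the interfacial flux terms have the good sign because the kink is concave; for $N\ge3$ there is in fact no singular part, only the integrable term $-(N-2)\cos\phi/|x'|$ of the good sign), or, more economically, by comparing $u$ with each planar branch separately — at which point you have rediscovered the paper's argument. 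Two small repairs: the radial datum should satisfy $v_0'(0)=0$ (not $v_0(0)=0$) with $v_0>0$ on $[0,R^*)$, as in \eqref{eqsymball}; and Lemma \ref{lemupu0}, which you invoke to place $\Omega(t_2)$ in a translated cone and to get $u(t_2,\cdot)\le\frac{a+\delta}{b-\delta}$, is stated for $\phi\in(\pi/2,\pi)$, so you should note that its proof (the bound via $u\le U\le u^*$ and the exterior-ball argument, which only uses $N[\Lambda^\phi+\xi_2e_N,r_2]\subset\Lambda^\phi+(\xi_2-r_2/\sin\phi)e_N$, valid for all $\phi\in(0,\pi)$) carries over verbatim.
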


\begin{proof}
We prove this theorem by two steps.

\smallskip
\noindent
{\bf Step 1:} {\it Proof of the first relation of \eqref{inconeset} and \eqref{inconebehu}.} 

 Due to the assumptions \eqref{outscone} and \eqref{addau0}, we can find $\xi_0>\xi_1$ such that 
$$
u_0(x) \geq \sigma_0:= \frac{1}{2}\liminf_{d(\tilde{x},\partial \Omega_0)\to\infty,\; \tilde{x}\in\Omega_0} u_0(\tilde{x}) \hbox{ for all } 
x\in \Lambda^\phi+ \xi_0e_N. $$
 Let $R^*>0$ be the positive constant given  in \eqref{radius}. 
Then we choose a radial function $\tilde{v}_0\in C^2([0,R^*])$ such that 
$$0< \tilde{v}_0(x)\leq \sigma_0 \hbox{ in } [0,R^*),\quad \tilde{v}_0'(0)=\tilde{v}_0(R^*)=0.  $$
Next, for any fixed $x_0\in \R^N$ with $B_{R^*}(x_0) \subset \Lambda^\phi+\xi_0e_N$, set $r=|x-x_0|$, and consider the following radially symmetric free boundary problem 
\begin{equation}\label{eqsymball} 
\left \{ \begin{array}{ll} 
\displaystyle\medskip \tilde{v}_t-d \Delta \tilde{v}=\tilde{v} (a-b \tilde{v}), \;\;&t>0, \; 0<r<\tilde{k}(t), \\
\displaystyle\medskip \tilde{v}_r (t,0)=0, \;\; \tilde{v}(t,\tilde{k}(t))=0, \;\; &t>0,\\
\displaystyle\medskip \tilde{k}'(t)=-\mu v_r (t, \tilde{k}(t)), \;\; &t>0,\\
\displaystyle  \tilde{k}(0)=R^*, \;\;\; \tilde{v}(0,r)=\tilde{v}_0 (r), \;\; & 0 \leq r \leq R^*.
\end{array} \right.
\end{equation}
It follows from \cite[Theorems 2.1, 2.5 and Corollary 3.7]{DG1} that problem \eqref{eqsymball}  admits a unique classical solution $(\tilde{v}(t,r),\tilde{k}(t))$ defined for all $t\geq 0$, and 
\begin{equation}\label{speedradial}
\lim_{t\to\infty} \frac{\tilde{k}(t)}{t}= c_*.
\end{equation}
Furthermore, applying \cite[Theorem 6.4]{DG2} to this problem, we have 
\begin{equation}\label{asym-tilde-v}
\lim_{t\to\infty} \max_{r\leq (c_*-\epsilon/2)t }  \left| \tilde{v}(t,r)-\frac{a}{b} \right|=0
\end{equation}
for every small $\epsilon>0$.

On the other hand, by our choices of $\tilde{v}_0$ and $\xi_0$, we have 
$$\tilde{v}_0(|x-x_0|)\leq u_0(x) \quad\hbox{ for all }\, x\in  B_{R^*}(x_0). $$
Then extending $\tilde{v}(t,|x-x_0|)$ to be zero for $|x-x_0|> \tilde{k}(t)$ and applying the comparison principle Theorem \ref{thmcomp}, we obtain 
\begin{equation}\label{comp-tildev-u}
\tilde{v}(t,|x-x_0|) \leq u(t,x)  \quad\hbox{for all }\, t\geq 0, \,x\in\R^N, 
\end{equation}
and hence,
$$\Big\{x\in\R^N:\,\,|x-x_0|\leq \tilde{k}(t) \Big\}\subset \Omega(t) \quad\hbox{for all }\, t\geq 0. $$
This together with \eqref{speedradial} implies that, for any $\epsilon>0$, there exists $\tilde{T}_1=\tilde{T}_1(\epsilon)>0$ such that 
$$\Big\{x\in\R^N:\,\,|x-x_0|\leq \big(c_*-\frac\epsilon 2\big)t \Big\}\subset \Omega(t) \quad\hbox{for all }\, t\geq \tilde{T}_1.$$

Note that the above analysis remains valid if $x_0$ replaced by any point $\tilde{x}_0$ such that $B_{R^*}(\tilde{x}_0) \subset \Lambda^\phi+\xi_0e_N$, and that the constant $\tilde{T}_1$ is independent of the choice of such $\tilde{x}_0$. 
It then follows that 
$$\bigcup_{ B_{R^*}(x_0) \subset \Lambda^\phi+\xi_0e_N } \Big\{x\in\R^N:\,\,|x-x_0|\leq 
\big(c_*-\frac\epsilon 2\big)t \Big\}\,\subset\, \Omega(t) \quad\hbox{for all }\, t\geq \tilde{T}_1. $$ 
Furthermore, it is easily seen that there exists $\tilde{T}_2=\tilde{T}_2(\epsilon) \geq \tilde{T}_1$ such that,  for all  $ t\geq \tilde{T}_2$,
\begin{equation}\label{comp-N-II}
 N[\Lambda^\phi, (c_*-\epsilon)t]\,\subset\, \bigcup_{ B_{R^*}(x_0) \subset \Lambda^\phi+\xi_0e_N } \Big\{x\in\R^N:\,\,|x-x_0|\leq 
\big(c_*-\frac\epsilon 2\big)t \Big\}.
\end{equation}
 We thus obtain the first relation of \eqref{inconeset}.
 
To complete the proof of this step, it remains to show \eqref{inconebehu}.  On the one hand, by \eqref{comp-tildev-u} and \eqref{comp-N-II}, we have 
$${\inf}_{x \in N[\Lambda^\phi, (c_*-\epsilon)t]} u(t,x)\, \geq \,\inf \left\{ \min_{|x-x_0|\leq 
(c_*-\epsilon/2)t} \tilde{v}(t,|x-x_0|):\, {B_{R^*}(x_0) \subset \Lambda^\phi+\xi_0e_N} \right\}$$
for all $t\geq \tilde{T}_2$. It further follows from \eqref{asym-tilde-v} that 
$$\liminf_{t\to\infty}{\inf}_{x \in N[\Lambda^\phi, (c_*-\epsilon)t]} u(t,x) \,\geq \,\frac{a}{b}.$$
On the other hand, since $u_0$ is bounded, by the arguments used at the beginning of the proof of Lemma \ref{lemupu0}, we have
$$\limsup_{t\to\infty} \sup_{x\in\R^N} u(t,x) \leq \frac{a}{b}.$$
Combining the above, we immediately obtain \eqref{inconebehu}. 
 
\smallskip

\noindent
{\bf Step 2:} {\it Proof of the second relation of \eqref{inconeset}. }

 Choose a one-dimensional function $\tilde{w}_0\in C^2((-\infty,1])\cap L^{\infty}((-\infty,1])$ such that 
\begin{equation*}
\tilde{w}_0(y)\geq \| u_0 \|_{L^{\infty}(\Omega_0)} \hbox{ in } (-\infty,0],\quad \tilde{w}_0(x)>0 \hbox{ in } (0,1) \quad\hbox{and}\quad  \tilde{w}_0(1)=0.
\end{equation*}
Then we consider the following one-dimensional free boundary problem   
\begin{equation}\label{c2eqlow} 
\left \{ \begin{array}{ll} 
\displaystyle\medskip \tilde{w}_t-d \tilde{w}_{yy}=\tilde{w}(a-b \tilde{w}), \;\;&t>0, \; -\infty<y<\tilde{\rho}(t), \\
\displaystyle\medskip \tilde{w}(t,\tilde{\rho}(t))=0, \;\; &t>0,\\
\displaystyle\medskip \tilde{\rho}'(t)=-\mu \tilde{w}_y (t,\tilde{\rho}(t)), \;\; &t>0,\\
\displaystyle  \tilde{\rho}(0)=1, \;\;\; \tilde{w}(0,y)=\tilde{w}_0 (y), \;\; & -\infty<y\leq 1.
\end{array} \right.
\end{equation}
It follows from \cite[Theorem 2.11]{DDL} that, \eqref{c2eqlow} admits a (unique)
classical solution $(\tilde{w}(t,y), \tilde{\rho}(t))$ defined for all $t>0$ and
$\tilde{\rho}'(t)>0$, $\tilde{w}(t,y)>0$ for $ -\infty<y<\tilde{\rho}(t)$, $t>0$.

For any  
\[
\nu\in \Sigma_\phi:= \Big\{x\in\mathbb S^{N-1}: \arccos(x\cdot e_N)\in [\phi+\frac\pi 2, \pi]\Big\},
 \]
 define
$$\tilde{\Omega}_{\nu}(t)=\Big\{x\in\R^N: x\cdot \nu \leq \tilde{\rho}(t)\Big\}\quad\hbox{and}\quad
\tilde{w}_\nu(t,x)=\tilde{w}(t,x\cdot\nu).$$
Clearly, we have $\Omega_0-\xi_2e_N \subset \tilde{\Omega}_{\nu}(0)$, and $u_0(\cdot+\xi_2 e_N)\leq \tilde{w}_{\nu}(0,\cdot)$ in 
$\Omega_0-\xi_2e_N$.  Then by similar comparison arguments as those used in the proof of Lemma \ref{lowbound}, we obtain 
\begin{equation}\label{onedupsolu}
\Omega(t)-\xi_2e_N \subset  \tilde{\Omega}_{\nu}(t) \quad\hbox{for all }\,t\geq 0,\;\nu\in \Sigma_\phi.
\end{equation}

Furthermore, it follows from the proof of \cite[Theorem 4.2]{DL} with similar modifications as those given in the proof of Lemma \ref{eslowra} that, for the given $\epsilon>0$, there exists $\tilde{T}_3=\tilde{T}_3(\epsilon)>0$ such that 
\begin{equation*}
\tilde{\rho}(t) \leq  \big(c_{*}+\frac\epsilon 2\big) t\,\hbox{ for all } t\geq \tilde{T}_3.  
\end{equation*}
This together with \eqref{onedupsolu} implies 
$$
\Omega(t)-\xi_2 e_N \subset \Big\{x\in \R^N:\,\, x\cdot \nu \leq \big(c_{*}+\frac\epsilon 2\big) t \Big\} \, \hbox{ for all }\, t\geq \tilde{T}_3,\;\nu\in \Sigma_\phi.$$
Since
\[
N[\Lambda^\phi, \big(c_{*}+\frac\epsilon 2\big) t]=\bigcup_{\nu\in\Sigma_\phi}\Big\{x\in \R^N:\,\, x\cdot \nu \leq \big(c_{*}+\frac\epsilon 2\big) t \Big\},
\]
by enlarging $\tilde T_3$ if necessary (depending on $\xi_2$ and $\epsilon$), we obtain
\[
\Omega(t)\subset N\big[\Lambda^\phi, (c_*+\epsilon)t\big] \; \hbox{ for all }\, t\geq \tilde{T}_3.
\]
 which clearly gives the second relation of \eqref{inconeset}. 
\end{proof}

\begin{rem}\label{sec6rem2}
The estimates in Theorem \ref{spreadspeed1} can be improved  by making use of sharp estimates for the spreading speed  for 
one space dimension free boundary problems in \cite{DMZ1} and for radially symmetric free boundary problems 
in \cite{DMZ2}.   We leave the details to the interested reader. 
\end{rem}

\section{Appendix}
This appendix is concerned with the existence and uniqueness of classical solutions to an auxiliary radially symmetric problem with initial 
range the exterior of  a ball. These results have been used to construct the weak supersolution for problem \eqref{eqfrfisher} in the proof of Theorem \ref{spreadspeed}, and here we consider a more general problem which might have other applications.  

More precisely, for any given $T>0$, $C_1>0$ and $C_2>0$, we consider the following radially symmetric free boundary problem\begin{equation}\label{symubd}
 \left \{ \begin{array}{ll} 
\displaystyle\medskip v_t-d \Delta v=\tilde{g}(r,v), \;\; & 0<t<T, \; h(t)<r<\infty,\\
\displaystyle\medskip v(t, h(t))=0, \;\; &0<t<T,\\
\displaystyle\medskip h'(t)=-\mu v_r (t, h(t)), \;\; &0<t<T,\\
 h(0)=R_0, \;\;\; v(0, r)=v_0 (r-R_0), \;\; &R_0 \leq r<\infty, 
\end{array} \right.
\end{equation}
where $ \Delta v=v_{rr}+\frac{N-1}{r}v_r$, $R_0>1$ is a constant to be determined by $T,\,C_1,\,C_2$ later, and  $v_0$ is a given  function in $ C^2([0,\infty))$ satisfying
\begin{equation}\label{symini1}
0<v_0(r) \leq  C_1\,\hbox{ for } \,r\in (0,\infty),\quad  v_0(0)=0,\quad \|v_0\|_{C^1([0,\infty))}\leq C_2.  
\end{equation}
Here we assume that $\tilde{g}(r,v)$ is a continuous function defined over $\R^+\times \R^+$ satisfying 
\begin{equation}\label{tildeg} 
\left. \begin{array} {ll}
\hbox{(i)}& \mbox{$\tilde g(r,v)$ is H\"{o}lder continuous in $r\geq 0$, $v\geq 0$},\\
\hbox{(ii)} &\smallskip  \tilde{g}(r,v) \hbox{ is locally Lipschitz in } v \hbox{ uniformly for }r\geq 0,\,\,\\
\hbox{(iii)} &\smallskip  \tilde{g}(r,0) \equiv 0,\,\,\\
\hbox{(iv)} & \hbox{there exists } K>0 \hbox{ such that }  \tilde{g}(r,v)  \leq  Kv \hbox{ for all } r \geq 0 \hbox{ and } v \geq 0.
\end{array} \right\}
\end{equation}

\begin{prop}\label{pexsym}
Assume that \eqref{tildeg} is satisfied. For any $T>0$, $C_1>0$ and $C_2>0$, there exists a constant $R_0>1$ depending on $T$, $C_1$ and $C_2$ such that for any $v_0$ satisfying \eqref{symini1},  
problem \eqref{symubd} admits a unique solution $(v(t,r), h(t))$ with $h \in C^{1} ([0, T])$, $v \in C^{1,2}(D_T)$, where $D_T=\big\{(t,r): \; t \in [0,T], \; r \in [h(t), \infty)\big\}$. Moreover, 
$v(t,r)>0$, $h'(t)<0$ for  $0<t\leq T,\,h(t) < r<\infty$, and
\begin{equation}\label{simpty}
h(T)\geq \frac{R_0}{2}.
\end{equation}
\end{prop}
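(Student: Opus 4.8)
The plan is to adapt the now-standard treatment of one-phase Stefan-type free boundary problems (as in \cite{DL,DG1,DDL}) to the present exterior-domain setting, in three stages: local existence and uniqueness of a classical solution; a priori bounds that are \emph{independent of $R_0$}, in particular a bound on $v_r$ at the free boundary; and a continuation argument that both extends the solution to $[0,T]$ and produces \eqref{simpty}.

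First, for local existence and uniqueness I would straighten the free boundary by the change of variables $y=r-h(t)$ (or a smoothed variant), which turns \eqref{symubd} into a uniformly parabolic problem on the fixed half-line $\{y>0\}$ with a nonlinear Stefan condition at $y=0$, and then solve it on a short interval $[0,\tau]$ by a contraction mapping argument exactly as in \cite[Sec.~2]{DL} or \cite[Thm.~2.11]{DDL}. The only features not already present there are the unboundedness of the spatial domain and the lower-order term $\frac{N-1}{r}v_r$, but these cause no difficulty: since $h(0)=R_0>1$, the coefficient $\frac{N-1}{r}$ is smooth and bounded on the relevant region, while $v_0\in C^2([0,\infty))\cap L^\infty$ with $v_0(0)=0$ supplies the compatibility needed at $t=0$, and away from the free boundary one simply uses the equation $v_t-d\Delta v=\tilde g(r,v)$. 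This yields a unique solution $(v,h)$ on a maximal interval $[0,T_{\max})$ with $v\in C^{1,2}$, $h\in C^1$; by the strong maximum principle $v>0$ for $r>h(t)$ (as $v_0\not\equiv 0$), and by the Hopf boundary lemma $v_r(t,h(t))>0$, hence $h'(t)=-\mu v_r(t,h(t))<0$, for $t\in(0,T_{\max})$, while $h'(0)=-\mu v_0'(0)\le 0$. (Uniqueness may also be deduced by extending $v$ by $0$ outside the ball and invoking the weak comparison principle of Section~2.)

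Second, the a priori bounds. Assumption \eqref{tildeg}(iv) makes $\bar v(t):=C_1\me^{Kt}$ a supersolution, so $0\le v\le C_1\me^{KT}=:M$ on $D_T\cap\{t<T_{\max}\}$. The key is a bound on $v_r(t,h(t))$ independent of $R_0$, valid as long as $h(t)\ge R_0/2$; I would get it with a local barrier in the thin strip $\{h(t)<r<h(t)+L^{-1}\}$ of the form $w(t,r)=2L^2\big(r-h(t)\big)-L^3\big(r-h(t)\big)^2$. One has $w(t,h(t))=0=v(t,h(t))$, $w=L\ge M\ge v$ on $r=h(t)+L^{-1}$, and $w(0,r)\ge C_2(r-R_0)\ge v_0(r-R_0)$ once $L^2\ge C_2$ (using $v_0(s)\le C_2 s$ from \eqref{symini1}); moreover, since $h'\le 0$ and, thanks to $h(t)\ge R_0/2\ge\frac12$ for $R_0\ge 1$, $\frac{N-1}{r}\le 2(N-1)$, one computes $w_t-dw_{rr}-d\frac{N-1}{r}w_r\ge 2dL^2\big(L-2(N-1)\big)\ge 2KL\ge Kw\ge\tilde g(r,w)$ provided $L$ is taken large depending only on $d,K,N$. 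Fixing $L:=\max\{\sqrt{C_2},\,M,\,L_1\}$ with $L_1=L_1(d,K,N)$ large enough, the parabolic comparison principle in the strip gives $v\le w$, hence $v_r(t,h(t))\le w_r(t,h(t))=2L^2=:C^\ast$, where $C^\ast$ depends only on $d,\mu,N,K,C_1,C_2,T$ and not on $R_0$. Consequently $0<-h'(t)=\mu v_r(t,h(t))\le \mu C^\ast$ whenever $h(t)\ge R_0/2$.

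Finally, set $R_0:=\max\{2,\,2\mu C^\ast T+1\}$, which depends only on $T,C_1,C_2$. A standard continuity argument then shows that for every $t\in[0,\min\{T,T_{\max}\})$ one has $h(t)\ge R_0-\mu C^\ast t\ge R_0-\mu C^\ast T>R_0/2>\frac12$, so the free boundary never approaches the origin. Combined with the uniform $L^\infty$ bound and the free-boundary gradient bound, parabolic Schauder estimates near the free boundary (\cite{LSU}) together with interior estimates away from it give uniform $C^{1+\alpha/2,\,2+\alpha}$ bounds on $v$ and a $C^{1+\alpha/2}$ bound on $h$ on $[0,\min\{T,T_{\max}\})$; these rule out any breakdown and allow the local existence step to be reapplied at $T_{\max}$ should $T_{\max}\le T$, a contradiction. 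Hence $(v,h)$ is a classical solution on all of $[0,T]$ with the stated regularity, and evaluating $h(t)\ge R_0-\mu C^\ast t$ at $t=T$ gives \eqref{simpty}. The main obstacle is the barrier step of the previous paragraph: one must check that a \emph{single} choice of $L$ makes $w$ a supersolution uniformly in $R_0\ge1$ and in all $v_0$ satisfying \eqref{symini1}, since this is precisely what decouples the free-boundary gradient bound from $R_0$ and lets the continuation argument close.
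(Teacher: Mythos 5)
Your proposal is correct and follows essentially the same route as the paper's proof: local existence via straightening the free boundary, the $L^\infty$ bound $v\le C_1\me^{KT}$ from \eqref{tildeg}(iv), the same quadratic barrier in a thin strip $\{h(t)<r<h(t)+L^{-1}\}$ (using $h\ge R_0/2\ge 1/2$ to control the term $\frac{N-1}{r}v_r$) to bound $v_r(t,h(t))$, hence $|h'|$, independently of $R_0$, and then choosing $R_0$ larger than a multiple of $T$ times that bound to force $h(T)\ge R_0/2$. The only differences are cosmetic: your initial comparison via $v_0(s)\le C_2 s$ replaces the paper's two-interval check, and your direct continuity argument replaces the paper's contradiction with the first time $h$ hits $R_0/2$.
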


\begin{proof}  For given $R_0>1$, following the proof of \cite[Theorem 2.1]{DG1} 
we can show that \eqref{symubd} has a unique solution for some small $T>0$. The proof involves the straightening of the free boundary,
and different from \cite{DG1}, the resulting problem here is over an unbounded interval for the new space variable. However, the estimates easily carry over (by using suitable interior estimates, similar to a related situation treated in \cite{DL2}) and so we obtain the local existence and uniqueness all the same. Moreover, all the stated properties in the proposition, except \eqref{simpty}, also hold.

Furthermore, the solution can be extended as long as $h(t)>0$. Let $T_\infty=T_\infty(R_0)$ be the maximal existence time of the solution. If $h(t)\geq R_0/2$ for all $t\in (0, T_\infty)$, then necessarily $T_\infty=\infty$ and thus $h(T)> R_0/2$, and there is nothing left to prove.

Suppose now $h(t_0)<R_0/2$ for some $t_0\in (0, T_\infty)$. Since $h(0)=R_0$, we can find $T_0=T_0(R_0)\in (0, t_0)$ such that 
$h(T_0)=R_0/2$. We are going to show that $T_0(R_0)>T$ provided that $R_0$ is sufficiently large, which clearly implies
$h(T)>R_0/2$, as desired. We use an indirect argument and assume that $T_0(R_0)\leq T$ for all $R_0>1$. 

By the assumption \eqref{tildeg}, it follows from the parabolic comparison principle that $v(t,r)\leq \bar{v}(t)$ for $r>h(t)$ and $0\leq t\leq T_0$, where $\bar{v}(t)$ is the solution to 
$$\frac{d \bar{v}}{dt}=K\bar{v}  \hbox{ for } t>0; \quad \bar{v}(0)=\|v_0\|_{L^{\infty}([0,\infty))}.$$
Clearly, $\bar{v}(t)=\|{v}_0\|_{L^{\infty}([0,\infty))} \me^{Kt}$ for $t\geq 0$. Thus, we have
$$v(t,r) \leq C_3:=C_1 \me^{KT} \,\hbox{ for } \; r>h(t),\, 0\leq t\leq T_0.$$

 Next we prove that there exists a positive constant $C_4$ independent of $R_0$ such that
\begin{equation}\label{sec5esti1}
 -C_4 \leq h'(t)<0 \;\; \hbox{ for } \; t \in (0, T_0].
\end{equation}
This would lead to a contradiction, since it follows that
\[
R_0/2=h(T_0)\geq h(0)-C_4T_0\geq R_0-C_4T>R_0/2 \mbox{ for all  large } R_0.
\]
Therefore to complete the proof, it suffices to show \eqref{sec5esti1}.
 To this end, for $M>0$ to be determined later, we define
$$\Omega=\Omega_{M}:=\Big\{(t,r): \; 0<t\leq T_0, \;\; h(t)<r<h(t)+M^{-1} \Big\}$$
and construct an auxiliary function
$$w(t,r):=C_3 \big[2M (r-h(t))-M^2 (r-h(t))^2\big]\in (0, C_3)\;\mbox{ for } \; (t,r)\in \Omega.$$
We will show that for some suitable choice of $M>0$,  $w(t,r) \geq v(t,r)$ holds over $\Omega$.

Direct calculations give, for $(t,r) \in \Omega$,
$$w_t=2C_3 M \big(-h'(t)\big ) \big[1-M(r-h(t))\big] \geq 0,$$
and
 $$-w_r=-2C_3 M \big[1-M (r-h(t))\big]\geq -2C_3M,\quad -w_{rr}=2C_3 M^2. $$ 
 Making use of $1/2\leq R_0/2\leq h(t)$ for $t\in (0, T_0]$, we obtain, for $(t,r)\in\Omega$,
\begin{equation*}
 w_t-d \Big(w_{rr}+\frac{N-1}{r} w_r \Big) \geq 2dC_3 M^2-2dC_3M\frac{N-1}{r}\geq 2dC_3\big[M^2-2(N-1)M\big]. 
\end{equation*}
Thus, if we choose 
$$M\geq  (N-1)+\sqrt{\frac{K}{2d}+(N-1)^2},$$ 
where $K$ is given in \eqref{tildeg}, then 
\begin{equation*}
w_t-d \Big( w_{rr}+\frac{N-1}{r} w_r \Big) \geq KC_3\geq \tilde{g}(r,w) \mbox{ for } (t,r)\in\Omega. 
\end{equation*}
Let us also note that for $t\in (0,T_0]$,
$$w(t, h(t)+M^{-1})=C_3 \geq v(t, h(t)+M^{-1}), $$ and 
$$w(t,h(t))=0=v(t, h(t)).$$ 
Thus, if our choice of $M$ also ensures
$$v(0,r) \leq w(0, r) \,\hbox{ for }\, r \in [R_0, R_0+M^{-1}],$$
then we can apply the maximum principle to $w-v$ over $\Omega$ to
deduce that $v(t,r) \leq w(t,r)$ for $(t,r) \in  \Omega$. It would then follow that
$$v_r (t, h(t)) \leq w_r (t, h(t))=2MC_3,$$
and so
$$h'(t)=-\mu v_r (t,h(t)) \geq -C_4:=-2MC_3 \mu,$$
as we wanted.

To complete the proof, we calculate
$$w_r (0, r)=2C_3M \big[1-M(r-R_0)\big] \geq C_3 M \;\;\; \mbox{for $r \in [R_0, R_0+(2M)^{-1}]$}.$$ 
Therefore, upon choosing
$$M:=\max  \Big\{(N-1)+\sqrt{\frac{K}{2d}+(N-1)^2}, \;\; \frac{2 C_2}{C_3} \Big \},$$ 
we will have
$$w_r (0, r) \geq 2C_2> v_0'(r)=v_r(0,r) \;\;\; \mbox{for $[R_0, R_0+(2M)^{-1}]$}.$$ 
Since $w(0,R_0)=v(0,R_0)=0$, the above inequality implies
$$w(0,r) \geq v (0,r) \;\;\; \mbox{for $r \in [R_0, R_0+(2M)^{-1}]$}.$$ 

For $r \in [R_0+(2M)^{-1}, R_0+M^{-1}]$, we have $w(0,r) \geq C_3/2$ and
$$v(0,r) \leq \|v(0,\cdot) \|_{C^1 ([R_0,R_0+M^{-1}])} M^{-1}\leq C_2 M^{-1}
\leq \frac{C_3}{2},$$ 
which clearly gives $v (0,r) \leq w(0,r)$. Since $M$ is independent of $R_0$, this completes the proof of \eqref{sec5esti1}. 
\end{proof}


\end{document}